\newtheorem{theorem}{\bf Theorem}[subsection]
\newtheorem{prop}[theorem]{\bf Proposition}
\newtheorem{cor}[theorem]{\bf Corollary}
\newtheorem{lemma}[theorem]{\bf Lemma}
\newtheorem{notation}[theorem]{\bf Notation}
\newtheorem{definition}[theorem]{\bf Definition}
\theoremstyle{remark}
\newtheorem{example}[theorem]{\bf Example}
\theoremstyle{remark}
\newtheorem{rem}[theorem]{\bf Remark}
 \numberwithin{equation}{subsection}
\newcommand{\ad}{\operatorname{ad}}
\newcommand{\Ad}{\operatorname{Ad}}
\newcommand{\tr}{\operatorname{tr}}
\newcommand{\Hom}{\operatorname{Hom}}
\def\go{\mathfrak}
\def\bb{\mathbb}
\def\C{\bb C}
\def\Z{\bb Z}
\def\N{\bb N}
\def\cal{\mathcal}
 \def\adots{\mathinner{\mkern2mu\raise1pt\hbox{.}
\mkern3mu\raise4pt\hbox{.}\mkern1mu\raise7pt\hbox{.}}}
 \title[Graded Lie algebras]{Minimal Graded Lie algebras and representations of  quadratic algebras}
\author{Hubert Rubenthaler}
\address
{Hubert Rubenthaler\\ Institut de Recherche Math\'ematique Avanc\'ee\\
Universit\'e de Strasbourg et CNRS\\
7 rue Ren\'e Descartes\\
67084 Strasbourg Cedex\\ France\\
E-mail: {\tt rubenth@math.unistra.fr}}
\begin{document}
\parindent=0pt
 
 \maketitle

\begin{abstract} {\bf Let $({\go g}_{0},B_{0})$ be a quadratic Lie algebra (i.e. a Lie algebra $\go{g}_{0}$ with a   non degenerate symmetric invariant bilinear form $B_{0}$) and let $(\rho,V)$ be a finite dimensional representation of ${\go g}_{0}$. We define on $ \Gamma(\go{g}_{0}, B_{0}, V)=V^*\oplus  {\go g}_{0}\oplus V$ a structure of local Lie algebra in the sense of Kac (\cite{Kac1}), where the bracket between $\go{g}_{0}$ and $V$ (resp. $V^*)$ is given by the representation $\rho$ (resp. $\rho^*$), and where the bracket between $V$ and $V^*$ depends on $B_{0}$ and $\rho$. This implies the existence of two $\Z$-graded Lie algebras ${\go g}_{max}(\Gamma(\go{g}_{0}, B_{0}, V))$ and ${\go g}_{min}(\Gamma(\go{g}_{0}, B_{0}, V))$ whose local part is  $\Gamma(\go{g}_{0},B_{0}, V)$. We investigate these graded Lie algebras, more specifically in the case where ${\go g}_{0}$ is reductive. Roughly speaking, the map $(\go{g}_{0},B_{0}, V)\longmapsto {\go g}_{min}(\Gamma(\go{g}_{0}, B_{0}, V))$ a bijection between triplets   and a class of graded Lie algebras.  We show that the existence of  "associated $\go {sl}_{2}$-triples" is equivalent to the existence of non trivial relative invariants on some orbit, and we define the "graded Lie algebras of polynomial type" which give rise to some  dual airs.} 

 \end{abstract}
 
\maketitle
\vskip 200pt
\hskip 30ptAMS classification:  17B70(17B20)
\vfill\eject
 {\small \def\contentsname{Table of Contents}
\tableofcontents}
\medskip\medskip\medskip\medskip\medskip\medskip
 
 \section{Introduction}
 
 In this paper,  following \cite{Kac1}, a graded Lie algebra is a Lie algebra  ${\go g}= \oplus_{i\in \Z}\,{\go g}_{i}$, such that 
      $\dim {\go g}_{i}<+\infty$,  such that $[{\go g}_{i},{\go g}_{j}]\subset {\go g}_{i+j}$, for all $i,j \in \Z$, and such that ${\go g}$ is generated by its {\it local part}  $ {\go g}_{-1}\oplus {\go g}_{0}\oplus {\go g}_{1}$.

 If  $\go{g}=\oplus_{i=-n}^{n} \go{g}_{i}$ is a grading of a (finite dimensional) complex semi-simple Lie algebra,  it is well known that if $B$ denotes the Killing form of $\go{g}$, then $B(\go{g}_{i},\go{g}_{j})=0 $ if $i+j \neq 0$.  This allows us to identify $\go{g}_{-1}$ with the dual $\go{g}_{1}^*$. Moreover as $B$ is invariant  the  bracket representation $(\go{g}_{0}, \go{g}_{-1})$  can be identified with the dual representation $(\go{g}_{0},\go{g}_{1}^*)$.
 
 It is then a natural question to ask if any finite dimensional representation $(\go{g}_{0},\rho,  V)$ of a finite dimensional Lie algebra $\go{g}_{0}$ can be embedded in a  graded Lie algebra $\go{g}=\oplus_{i=-\infty}^{+\infty} \go{g}_{i}$ such that $(\go{g}_{0}, \go{g}_{1})\simeq (\go{g}_{0},\rho, V)$ and $(\go{g}_{0}, \go{g}_{-1})\simeq (\go{g}_{0},\rho^*, V^*)$, and such that the bracket between $V$ and $V^*$ is non trivial. 
 
 The first result of this paper is to give 	a positive answer  to this question for any representation of a quadratic Lie algebra. A quadratic Lie algebra is a pair $(\go{g}_{0}, B_{0})$ where  $\go{g}_{0}$ is a Lie algebra and $B_{0}$ a non-degenerate  invariant symmetric bilinear form on $\go{g}_{0}$.  Of course the definition of the bracket between $V$ and $V^*$ will depend  on $B_{0}$ and $\rho$.

 We will use a result of V. Kac  (\cite{Kac1}) which asserts that in order to construct a graded Lie algebra  $\go{g}=\oplus_{i\in \Z}  \go{g}_{i}$ it suffices to construct the local part $\Gamma=\go{g}_{-1}\oplus \go{g}_{0}\oplus \go{g}_{1}$, which has to be endowed with a partial Lie bracket (see section 2 for details). Therefore once we have build   the partial bracket on the  local part  $\Gamma({\go g}_{0},B_0,\rho)=V^*\oplus \go{g}_{0}\oplus V$,  the existence of the "global" Lie algebra is just an application of a  result of Kac. In fact Kac theory provides us with  two such graded Lie algebras: a maximal one (denoted here $\go{g}_{max}(\Gamma({\go g}_{0},B_0,\rho)) $) and a minimal one (denoted $\go{g}_{min}(\Gamma({\go g}_{0},B_0,\rho)))$. Any graded Lie algebra with a given local part is a quotient of the maximal algebra, and has a quotient isomorphic to the minimal one. Of course, in general, these algebras are infinite dimensional.
 
 \vskip 5pt

Lets us now    give a more precise description of the paper.

 \vskip 5pt

In section $2.1$ we give a brief account of the results of Kac that we will use. 

  In section $2.2$ we prove    a general result concerning this  construction.   Let  $\Gamma= \go{g}_{-1}\oplus \go{g}_{0}\oplus \go{g}_{1}$ be  a local Lie algebra and let  $ \go{g}_{min}(\Gamma)= \oplus_{-\infty}^{+\infty}\go{g}_{i} $ be  the minimal graded Lie algebra whith local part $\Gamma$. Let $|n|\geq 2$. We show that there exists a universal polynomial ${\cal P}_{n}$ defined on the local part $\Gamma$ such that $\go{g}_{n}=\{0\}$ if and only if the identity ${\cal P}_{n}=0$ is satisfied on the local Lie algebra $\Gamma$. 
  
   \vskip 5pt

In section 3.1  we  define  the local Lie algebra structure on $\Gamma({\go g}_{0},B_0,\rho)=V^*\oplus \go{g}_{0}\oplus V$ (see   Theorem \ref{th-algebre-locale}) . In   section 3.2 we give necessary and sufficient conditions  for the algebras $\Gamma(\go{g}_{0}^1, B_{0}^1,\rho_{1})$ and $\Gamma(\go{g}_{0}^2, B_{0}^2,\rho_{2})$ corresponding to two fundamental data to be isomorphic. We also investigate  the dependence of the local structure on $B_{0}$ and $\rho$ and show that any isomorphism between  the "fundamental triplets" $(\go{g}_{0}^1, B_{0}^1, (\rho_{1},V_{1}))$ and $(\go{g}_{0}^2, B_{0}^2, (\rho_{1},V_{2}))$  can be extended to an isomorphism between $\Gamma(\go{g}_{0}^1, B_{0}^1,\rho_{1})$ and $\Gamma(\go{g}_{0}^2, B_{0}^2,\rho_{2})$. 
In section 3.3 we apply Kac Theorem  to obtain  the minimal and maximal Lie algebras associated to the local Lie algebra  $\Gamma({\go g}_{0},B_0,\rho)=V^*\oplus \go{g}_{0}\oplus V$. We also prove, that under some conditions, the  reductive graded Lie algebras are always minimal graded Lie algebras (Proposition  \ref{prop-reductive=minimal}). 
 Section 3.4 deals with another important notion  for graded Lie algebras due to Kac, namely the transitivity (see Definition \ref{def-transitivity}). We give a necessary and sufficient condition for the local Lie algebra $\Gamma({\go g}_{0},B_0,\rho)$, or the minimal Lie algebra $\go{g}_{min}(\Gamma({\go g}_{0},B_0,\rho))$,  to be transitive (Proposition \ref{prop-transitive}).  
 We also prove that if $\go{g}_{0}$ is reductive, then under some conditions including the transitivity of  $\Gamma({\go g}_{0},B_0,\rho)$,  the fact that $\go{g}_{min}({\Gamma({\go g}_{0},B_0,\rho))}$ is finite dimensional  implies   that $\go{g}_{min}({\Gamma({\go g}_{0},B_0,\rho))}$ is semi-simple (see Proposition \ref{prop-finie-ss}).
 In section 3.5 we show  that the form $B_{0}$ extends uniquely to an invariant symmetric bilinear form $B$ on $\go{g}_{min}(\Gamma({\go g}_{0},B_0,\rho))$. Moreover if  the local part is transitive then  the form $B$ is nondegenerate (Proposition \ref{prop-forme-globale}). This allows us to show that there exists a bijection between some equivalence classes of fundamental triplets and the equivalence classes of transitive graded Lie algebras endowed with a non-degenerate symmetric bilinear form $B$ such that $B(\go{g}_{i},\go{g}_{j})=0 $ if $i\neq-j$ (Theorem \ref{th-bijection}). 
 
  \vskip 5pt
In section 4.1 we give a necessary and sufficient condition  for the existence of an $\go{sl}_{2}$-triple $(Y,H_{0},X)$ where $Y\in V^*$, $X\in V$ and where $H_{0}$ is the "grading element" of the center of $\go{g}_{0}$ defined by  the condition $\rho(H_{0})_{|_{V}}=2 \text{Id}_{V}$ (see Theorem \ref{th-CNS-sl2}). In section 4.2 we  assume that the representation $\rho$ lifts to a representation of a connected algebraic group $G_{0}$ with Lie algebra $\go{g}_{0}$. We prove then that the existence of such an $\go{sl}_{2}$-triple is also equivalent  to the existence of a $G_{0}$-orbit in $V$ supporting a non trivial rational relative invariant  (see Theorem \ref{th-equivalence-invariants-(P)-sl2}). Moreover, if ${\cal O}_{X}$ is such an orbit, and if $x\in {\cal O}_{X}$, the map $x\longmapsto \varphi(x)$, where $\varphi$ is the set of $y\in \go{g}_{-1}$ such that $(y,H_{0},x)$ is an $\go{sl}_{2}$-triple, can be viewed as an equivariant section of the cotangent bundle of the orbit of $x$ (Proposition \ref{prop-y=section-cotangent}).

\vskip 5pt
  
  Section 5 is devoted to the so-called graded Lie algebras of polynomial type. These algebras are defined in section 5.1 to be the minimal Lie algebras associated to $\go{g}_{0}=\go{gl}(W)$ and to the "natural " representation  of $\go{sl}(W)=\go{g}_{0}'$ on the space $V=\C^p[W]$ of homogeneous  polynomials of degree $p$ on $W$ (see Definition \ref{def-type-symplectique}). We classify the finite dimensional Lie algebras of polynomial type (Proposition \ref{prop-symplectic-finite}) and show that these algebras are always associated to $\go{sl}_{2}$-triples in the sense of section 4 (Theorem \ref{th-symplectique-sl2}). Finally, in section 5.2 we show, under some assumptions, that  if $(A,W)$ is an irregular reductive regular prehomogeneous vector space, then the semi-simple part $\go{a}'$ of the Lie algebra of $A$ is the member of a dual pair in the  Lie algebras of polynomial type associated to $\C^p[W]$, where $p$ is the degree of the fundamental relative invariant of $(A,W)$ (Theorem \ref{th-paire-duale}).
  
  \vskip 10pt
  {\bf Acknowledgment:}  
  
   I would  like tho thank the referee for his careful reading of the manuscript and for his numerous remarks, suggestions and corrections which have allowed me to improve the original text.
   
   I would like to thank Michel Brion who communicated me Lemma \ref{lem-P}.  
 
 
 
\vskip 20pt
 \section{Graded Lie algebras and local Lie algebras}
 \vskip 10pt

 In this paper all the algebras are defined over the field $\C$ of complex numbers. 
 \subsection{Maximal and minimal algebras}\hfill
\vskip 5pt
Recall from the Introduction that  by a graded Lie algebra we mean  a $\Z$-graded Lie algebra ${\go g}= \oplus_{i\in \Z}\,{\go g}_{i}$, such that  

a) $\dim {\go g}_{i}<+\infty$ ,  $[{\go g}_{i},{\go g}_{j}]\subset {\go g}_{i+j}$, for all $i,j \in \Z$, 

b) $\go g$ is generated by  ${\go g}_{-1}\oplus {\go g}_{0}\oplus {\go g}_{1}$.
 
  
   
   
 
 If ${\go g}$ is a graded Lie algebra, the subspace $\Gamma( {\go g})={\go g}_{-1}\oplus {\go g}_{0}\oplus {\go g}_{1}$ is called the {\it local part } of ${\go g} $.
 
 \begin{definition} \hfill
 
    {\rm1)} A local Lie algebra is a direct sum $\Gamma=\go{g}_{-1}\oplus\go{g}_{0}\oplus \go{g}_{1}$ of finite dimensional subspaces such that if $|i+j|\leq 1$ there exists a bilinear anticommutative operation $\go{g}_{i}\times \go{g}_{j}\rightarrow \go{g}_{i+j} ((x,y)\rightarrow [x,y])$ such that the Jacobi identity $[x,[y,z]]=[[x,y],z]+[y,[x,z]]$ holds each time the three terms of the identity are defined.

       {\rm2)} A symmetric bilinear form $B_{\Gamma}$ on a local Lie algebra $\Gamma$ is said to be invariant if the identity
       $$B_{\Gamma}([x,y],z)= B_{\Gamma}(x,[y,z])$$
       holds for $x,y,z\in \Gamma$ each time that the brackets are defined.
 \end{definition}
 
 \begin{definition}\hfil
 
 $a)$ Let $\go{g}^1=\oplus_{n\in \Z} \go{g}^1_{n}$ and $\go{g}^2=\oplus_{n\in \Z} \go{g}^2_{n}$ be two graded Lie algebras. A homomorphism of graded Lie algebras from  $\go{g}^{1}$ to $\go{g}^{2}$ is a map $\Psi: \go{g}^{1}\longrightarrow \go{g}^{2}$ which is a homomorphism of Lie algebras such that $\forall n\in \N, \Psi(\go{g}^1_{n})\subset \go{g}^2_{n}$.
  
  $b)$ Let $\Gamma_{1}=\go{g}_{-1}^1\oplus\go{g}_{0}^1\oplus \go{g}_{1}^1$ and $\Gamma_{2}=\go{g}_{-1}^2\oplus\go{g}_{0}^2\oplus \go{g}_{1}^2$ be two local Lie algebras. A homomorphism of local Lie algebras between $\Gamma_{1}$ and $\Gamma_{2}$ is a linear map $\Psi:\Gamma_{1}\longrightarrow \Gamma_{2}$ such that $\forall n\in\{-1,0,1\}, \Psi(\go{g}^1_{n})\subset \go{g}^2_{n}$, and such that $\Psi([x,y])=[\Psi(x),\Psi(y)]$, for all $x,y\in \Gamma_{1}$ such that the bracket $[x,y]$ is defined.
 \end{definition}
 
 Of course the local part $\Gamma({\go g})$ of a graded Lie algebra ${\go g}$, endowed with the bracket of ${\go g}$ is a local Lie algebra. A natural question is to know if, for a given  local Lie algebra  $\Gamma$, there exists a graded Lie algebra whose local part is $\Gamma$. The answer is "yes". More precisely we have:
 \begin{theorem}\label{th-Kac}{\rm(Kac, \cite{Kac1}, Proposition 4) }\hfill
 
 Let $\Gamma=\go{g}_{-1}\oplus \go{g}_{0}\oplus \go{g}_{1}$ be a local Lie algebra. 
 
 {\rm 1)} There exists a unique graded Lie algebra  $\go{g}_{max}(\Gamma)$ whose local part is $\Gamma$ and which satisfies the following universal property.
 
 Any morphism of local Lie algebras $\Gamma\rightarrow \Gamma({\go g})$ from $\Gamma$ into the local part $\Gamma({\go g})$ of a graded Lie algebra ${\go g}$ extends uniquely to a morphism of graded Lie algebras $\go{g}_{max}(\Gamma)\rightarrow {\go g}$. $($And hence any graded Lie algebra whose local part is isomorphic to $\Gamma$, is a quotient of $\go{g}_{max}(\Gamma)$$)$.
 Moreover we have 
 $$\go{g}_{max}(\Gamma)=F(\go{g}_{-1})\oplus \go{g}_{0}\oplus F(\go{g}_{1}),$$
 where $F(\go{g}_{-1})$ $($resp. $F(\go{g}_{1})$$)$ is the free Lie algebra generated by $\go{g}_{-1}$ $($resp. $\go{g}_{1}$$)$.
 
 {\rm 2)} There exists a unique graded Lie algebra  $\go{g}_{min}(\Gamma)$ whose local part is $\Gamma$ and which satisfies the following universal property.
 
 Any surjective morphism of local Lie algebras $\Gamma(\go{g})\rightarrow  \Gamma$ from the local part of a graded Lie algebra ${\go g}$ into $\Gamma$ extends uniquely to a $($surjective$)$ morphism of graded Lie algebras $ {\go g}\rightarrow \go{g}_{min}(\Gamma)$. $($And hence $\go{g}_{min}(\Gamma)$ is a quotient of any graded Lie algebra whose local part is isomorphic to $\Gamma$$)$.
 
 In fact $\go{g}_{max}(\Gamma)$ has a unique maximal graded ideal $J_{max}$ such that $J_{max}\cap \Gamma= \{0\}$, and $\go{g}_{max}(\Gamma)/J_{max}=\go{g}_{min}(\Gamma)$.
\end{theorem}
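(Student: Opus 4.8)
The plan is to realize $\go{g}_{max}(\Gamma)$ explicitly by generators and relations, to read off its universal property almost formally, and then to carve out $\go{g}_{min}(\Gamma)$ as a canonical quotient. First I would take the free Lie algebra $\go{L}$ on the vector space $\go{g}_{-1}\oplus\go{g}_{0}\oplus\go{g}_{1}$, $\Z$-graded by giving $\go{g}_{i}$ degree $i$ for $i=-1,0,1$, write $\iota\colon\Gamma\hookrightarrow\go{L}$ for the embedding of the generators, and let $I$ be the graded ideal of $\go{L}$ generated by all elements $[\iota(a),\iota(b)]-\iota([a,b])$ with $a\in\go{g}_{i}$, $b\in\go{g}_{j}$, $|i+j|\leq 1$ (the inner bracket being the partial bracket of $\Gamma$). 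I set $\go{g}_{max}(\Gamma):=\go{L}/I$; it is $\Z$-graded and generated in degrees $-1,0,1$ by the image of $\Gamma$. The universal property is then immediate: a morphism of local Lie algebras $\varphi\colon\Gamma\to\Gamma(\go{g})$ lifts, by freeness of $\go{L}$, to a graded Lie algebra morphism $\go{L}\to\go{g}$ which annihilates the generators of $I$ because $\varphi$ respects the partial bracket, hence factors through $\go{g}_{max}(\Gamma)$; uniqueness holds because $\Gamma$ generates the target.

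The one genuinely nontrivial point, and the step I expect to be the main obstacle, is to prove that $\Gamma\to\go{g}_{max}(\Gamma)$ is injective, i.e. that $I$ has no nonzero component in degrees $-1,0,1$, so that the local part of $\go{g}_{max}(\Gamma)$ really is $\Gamma$ and not a proper quotient. For this I would exhibit a sufficiently faithful module: on the tensor algebra $T(\go{g}_{1})$ let $\go{g}_{1}$ act by left multiplication, $\go{g}_{0}$ by the unique derivation extending the action $h\mapsto[h,-]$ on $\go{g}_{1}$ (which is an $\go{g}_{0}$-action, by the local Jacobi identity), and $\go{g}_{-1}$ by "contraction" operators defined by induction on tensor degree so as to force the relations coming from the partial bracket; a tedious but straightforward verification of the Jacobi identity then makes $T(\go{g}_{1})$ a $\go{g}_{max}(\Gamma)$-module on which $\go{g}_{0}$ and $\go{g}_{1}$ act faithfully, and the mirror construction on $T(\go{g}_{-1})$ does the same for $\go{g}_{-1}$. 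Since we already have surjections $\go{g}_{i}\twoheadrightarrow(\go{g}_{max}(\Gamma))_{i}$, this forces $(\go{g}_{max}(\Gamma))_{i}=\go{g}_{i}$ for $i=-1,0,1$. The same bookkeeping identifies $\oplus_{n\geq 1}(\go{g}_{max}(\Gamma))_{n}$ with the free Lie algebra on $\go{g}_{1}$, the degree $0$ part with $\go{g}_{0}$ itself, and $\oplus_{n\leq -1}(\go{g}_{max}(\Gamma))_{n}$ with the free Lie algebra on $\go{g}_{-1}$, which yields the asserted decomposition $\go{g}_{max}(\Gamma)=F(\go{g}_{-1})\oplus\go{g}_{0}\oplus F(\go{g}_{1})$ and, in passing, $\dim(\go{g}_{max}(\Gamma))_{n}<+\infty$.

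For part 2) I would exploit that $\Gamma$ is exactly the sum of the homogeneous components of degrees $-1,0,1$: hence a graded ideal $J\subset\go{g}_{max}(\Gamma)$ satisfies $J\cap\Gamma=\{0\}$ if and only if $J_{-1}=J_{0}=J_{1}=0$, a property preserved under arbitrary sums, so there is a largest such ideal $J_{max}$, and I set $\go{g}_{min}(\Gamma):=\go{g}_{max}(\Gamma)/J_{max}$. Its local part is still $\Gamma$, and by maximality of $J_{max}$ it has no nonzero graded ideal meeting $\Gamma$ trivially. To check the universal property, given a surjective morphism of local Lie algebras $\pi\colon\Gamma(\go{g})\twoheadrightarrow\Gamma$ I would apply part 1) twice: to the identity local morphism of $\Gamma(\go{g})$, getting a surjection $q_{1}\colon\go{g}_{max}(\Gamma(\go{g}))\twoheadrightarrow\go{g}$ whose kernel is a graded ideal with zero degree $-1,0,1$ parts; and to $\pi$, getting a morphism $\go{g}_{max}(\pi)\colon\go{g}_{max}(\Gamma(\go{g}))\to\go{g}_{max}(\Gamma)$, which I compose with the projection onto $\go{g}_{min}(\Gamma)$ to obtain a surjection $q_{2}$. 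Then $q_{2}(\ker q_{1})$ is a graded ideal of $\go{g}_{min}(\Gamma)$ with zero degree $-1,0,1$ parts, hence meeting $\Gamma$ trivially, hence $\{0\}$; so $\ker q_{1}\subset\ker q_{2}$, and $q_{2}$ descends to a surjection $\go{g}\cong\go{g}_{max}(\Gamma(\go{g}))/\ker q_{1}\to\go{g}_{min}(\Gamma)$ which extends $\pi$ and is unique because $\Gamma(\go{g})$ generates $\go{g}$. The final identity $\go{g}_{max}(\Gamma)/J_{max}=\go{g}_{min}(\Gamma)$ is then just the definition of $J_{max}$.
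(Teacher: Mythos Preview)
The paper does not give a proof of this theorem; it is quoted from Kac \cite{Kac1} and used as a black box, so there is no argument in the text to compare yours against directly. Your treatment of part~2) is correct and is the standard one.

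In part~1), however, there is a genuine gap. You assert that $\go{g}_{0}$ acts faithfully on $T(\go{g}_{1})$ through the derivation extending $h\mapsto[h,-]$ on $\go{g}_{1}$. This is false whenever the $\go{g}_{0}$-module $\go{g}_{1}$ is not faithful: if $h\in\go{g}_{0}$ annihilates $\go{g}_{1}$ then the induced derivation of $T(\go{g}_{1})$ is identically zero, yet such an $h$ need not vanish in $\go{g}_{max}(\Gamma)$. A concrete counterexample: take $\go{g}_{0}=\C h$, $\go{g}_{\pm 1}=\C e_{\pm}$, with $[h,e_{\pm}]=0$ and $[e_{+},e_{-}]=h$. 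This is a legitimate local Lie algebra, $h$ acts trivially on $T(\go{g}_{1})$ and on $T(\go{g}_{-1})$, but $h=[e_{+},e_{-}]\neq 0$ in $\go{g}_{max}(\Gamma)$. So your pair of modules does not establish the injectivity of $\go{g}_{0}\to(\go{g}_{max}(\Gamma))_{0}$, and without that you cannot conclude that the local part of $\go{L}/I$ is $\Gamma$ rather than a proper quotient.

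Kac's own proof sidesteps this by running the construction in the opposite order: he \emph{starts} from the vector space $F(\go{g}_{-1})\oplus\go{g}_{0}\oplus F(\go{g}_{1})$ and defines the bracket on it directly (the free brackets on $F(\go{g}_{\pm 1})$, the given one on $\go{g}_{0}$, the derivation action of $\go{g}_{0}$ on $F(\go{g}_{\pm 1})$, and an inductive definition of $[\go{g}_{\mp 1},F(\go{g}_{\pm 1})_{n}]$ forced by Jacobi), then verifies the Jacobi identity. With that approach the embedding of $\Gamma$ and the displayed decomposition are built in by construction, and the universal property follows from freeness of $F(\go{g}_{\pm 1})$. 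If you prefer to keep your generators-and-relations presentation $\go{L}/I$, you can use Kac's explicit algebra as a target: mapping $\go{L}/I$ onto it via the universal property of $\go{L}$ shows at once that $\Gamma$ embeds in $\go{L}/I$, which repairs the gap.
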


\begin{prop}\label{prop-Kac}{\rm(Kac, \cite{Kac1}, Proposition 7) }\hfill

Let ${\go g}= \oplus_{i\in \Z}\,{\go g}_{i}$. Let $B$ be a bilinear invariant symmetric form on the local part $\Gamma(\go g)$, such that $B(\go{g}_{i}, \go{g}_{j})=0$ when $i+j\neq 0$ $(i,j \in \{-1,0,1\})$. Then $B$ extends uniquely to a bilinear invariant symmetric form on $\go{g}$ $($still denoted $B$$)$ such that $B(\go{g}_{i}, \go{g}_{j})=0$ when $i+j\neq 0$ $(i,j \in \Z)$.

\end{prop}
\vskip5pt
\subsection{ When is $\dim(\go{g}_{min}(\Gamma))< +\infty?$}  \hfill
\vskip 5pt
  Let $\Gamma$ be a local Lie algebra. Consider the minimal graded Lie algebra  $\go{g}_{min}(\Gamma)$ associated to $\Gamma$. A natural question is to ask how one can see, only from the knowledge of the  local Lie algebra $\Gamma$, whether or not $\go{g}_{min}(\Gamma)$ is finite dimensional. For example suppose that $\Gamma$ is already a  Lie algebra, but we do not know anything about the brackets between two elements of $\go{g}_{1}$ or of $\go{g}_{-1}$.   Then    we have surely 
  $${\cal P}_{2}(Y,X_{1},X_{2})= [[Y,X_{1}],X_{2}]+ [X_{1},[Y, X_{2}]]=0 \eqno(2-2-1)$$ for all $Y\in \go{g}_{-1} $ and all $X_{1},X_{2}\in \go{g}_{-1}$. This is because this element is equal to 
  $$[Y,[X_{1},X_{2}]] =0\eqno(2-2-2)$$
   in $\go{g}_{min}(\Gamma)$. The first equation makes sense in $\Gamma$, but not the second. Conversely, if the relation $(2-2-1)$  holds in the local Lie algebra $\Gamma$, then $\Gamma=\go{g}_{min}(\Gamma)$ is in fact a $3$-graded Lie algebra.
  
  We will show that there exists a "universal" polynomial identity ${\cal P}_{n}=0$ in any  local Lie algebra $\Gamma$ which is a necessary and sufficient condition for having $\go{g}_{n}=\{0\}$ (see Theorem \ref{th-finitude-generale} below).
  
  Let us denote by ${\cal V}_{n}= \{Y_{1},\dots,Y_{n-1}, X_{1},\dots,X_{n}\}$ a set of $2n-1$ variables. Let $F({\cal V}_{n})$ be the free Lie algebra on ${\cal V}_{n}$.

  A {\it   Lie monomial of degree one} in the variables  ${\cal V}_{n}$ is an element of ${\cal V}_{n}$. By induction   a {\it Lie monomial of degree $k$} in the variables  ${\cal V}_{n}$  is an element of  $F({\cal V}_{n})$ of the form $[u,v]$ where $u$ is a Lie monomial of degree $i \,\,(1\leq i< k)$ and $v$ is a Lie monomial of degree $k-i$ . A {\it Lie polynomial } ${\cal P}$  in the variables  ${\cal V}_{n}$ is a linear combination of monomials (in other words it is just an element of $F({\cal V}_{n})$).

  \begin{prop}\label{prop-poly-P_{n}}\hfill
  
  Let $n\geq 2$. The Lie monomial  $[Y_{1},[Y_{2},[\dots,[Y_{n-1},[X_{1},[\dots,[X_{n-1},X_{n}]\dots]$  is equal $($as elements of the free algebra  $F({\cal V}_{n})$$)$ to  a Lie polynomial of the form 
  $${\cal P}_{n}(Y_{1},\dots,Y_{n-1},X_{1}, \dots,X_{n})=\sum_{\alpha}U_{\alpha}(Y_{1},\dots,Y_{n-1},X_{1}, \dots,X_{n})$$
  where each monomial $U_{\alpha}$ makes sense in the local Lie algebra $\Gamma$ when  $Y_{i}\in \go{g}_{-1}, i=1,\dots,n-1$ and when $X_{j}\in \go{g}_{1}, j=1,\dots,n$.

  \end{prop}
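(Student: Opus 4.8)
The statement is really a combinatorial fact about the free Lie algebra $F({\cal V}_n)$, and the plan is to prove it by iterating the Jacobi identity, written as ``$\ad(z)$ is a derivation of $F({\cal V}_n)$'', after fixing some bookkeeping. First I would assign to each variable $X_j$ the \emph{weight} $+1$ and to each $Y_i$ the weight $-1$, and to a Lie monomial in ${\cal V}_n$ the sum of the weights of the variables that occur in it. Then a Lie monomial makes sense in an arbitrary local Lie algebra $\Gamma$, when the $Y_i$ are placed in $\go{g}_{-1}$ and the $X_j$ in $\go{g}_{1}$, if and only if every one of its sub-brackets has weight in $\{-1,0,1\}$; I will call such a monomial \emph{admissible}. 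For an integer $d\geq 2$ I would say that an element of $F({\cal V}_n)$ is \emph{$d$-splittable} if it is a linear combination of right-nested brackets $[c_1,[c_2,[\dots,[c_{d-1},c_d]\dots]]]$ whose blocks $c_1,\dots,c_d$ are all admissible monomials of weight $1$, and for $d\in\{-1,0,1\}$ I would let ``$d$-splittable'' just mean ``a linear combination of admissible monomials'' (which are then automatically of weight $d$). In particular, a $1$-splittable element is precisely a linear combination of admissible monomials, i.e.\ exactly the kind of expression the Proposition asks for.

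The heart of the proof is the following claim: \emph{if $d\geq 0$, if $S$ is $d$-splittable and if $a$ is an admissible monomial of weight $-1$, then $[a,S]$ is $(d-1)$-splittable.} For $d\leq 1$ this is clear, since $[a,U]$ is admissible of weight $\leq 0$ whenever $U$ is admissible of weight $\geq 0$. For $d\geq 2$ one may treat a single right-nested bracket $S=[c_1,[c_2,\dots,[c_{d-1},c_d]\dots]]$. Expanding $\ad(a)S$ by the derivation rule gives $[a,S]=\sum_{k=1}^{d}W_k$, where $W_k$ is the right-nested bracket obtained from $S$ by replacing the block $c_k$ with the admissible weight-$0$ block $[a,c_k]$. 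If $k\in\{d-1,d\}$, the block $[a,c_k]$ is absorbed into the adjacent block to give a single admissible weight-$1$ block ($[[a,c_{d-1}],c_d]$ when $k=d-1$, and $[c_{d-1},[a,c_d]]$ when $k=d$), so $W_k$ is a right-nested bracket of $d-1$ admissible weight-$1$ blocks. If $k\leq d-2$, I would apply the derivation rule a second time, now to $\ad([a,c_k])$ acting on the right-nested bracket $[c_{k+1},[c_{k+2},\dots,c_d]\dots]$; this rewrites $W_k$ as a sum, over the indices $l$ with $k<l\leq d$, of right-nested brackets in which the weight-$0$ block $[a,c_k]$ has been slid past $c_{k+1},\dots,c_{l-1}$ and merged with $c_l$ into the admissible weight-$1$ block $[[a,c_k],c_l]$. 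Each of these is again a right-nested bracket of $d-1$ admissible weight-$1$ blocks, so $[a,S]$ is $(d-1)$-splittable.

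To finish, I would iterate the claim. The monomial of the statement is $\ad(Y_1)\ad(Y_2)\cdots\ad(Y_{n-1})(R)$ with $R=[X_1,[X_2,\dots,[X_{n-1},X_n]\dots]]$, and $R$ is $n$-splittable by inspection (for $n\geq 2$). Applying the claim $n-1$ times, with $a=Y_{n-1},Y_{n-2},\dots,Y_1$ in succession, is legitimate because before each application the current element is $m$-splittable with $m\in\{n,n-1,\dots,2\}$, hence $m\geq 0$; the outcome is that $\ad(Y_1)\cdots\ad(Y_{n-1})(R)$ is $1$-splittable, that is, equal to a linear combination $\sum_\alpha U_\alpha$ of admissible monomials. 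By the very definition of ``admissible'', each $U_\alpha$ makes sense in $\Gamma$ when the $Y_i$ lie in $\go{g}_{-1}$ and the $X_j$ lie in $\go{g}_{1}$, and this $\sum_\alpha U_\alpha$ is the desired ${\cal P}_n$. I expect the only delicate point to be the case $k\leq d-2$ of the claim: the one-step expansion of $[a,S]$ produces blocks of weight $\geq 2$ (which on their own do not make sense in $\Gamma$), and the key observation is that a \emph{second} derivation expansion pushes the auxiliary weight-$0$ factor down the spine until it reaches, and is absorbed by, a weight-$1$ factor — after which one must still check that every monomial produced is genuinely admissible.
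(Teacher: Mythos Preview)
Your proof is correct and is essentially the same argument as the paper's, only packaged with explicit terminology (``admissible'', ``$d$-splittable'') where the paper proceeds by a direct induction on $n$. In both cases the core step is identical: for the innermost $Y$, one applies the Jacobi identity once to produce weight-$0$ blocks $[Y,X_i]$, then a second time to slide each such block along the right-nested spine until it merges with some $X_l$ into an admissible weight-$1$ block $[[Y,X_i],X_l]$; this is exactly the paper's passage from $U_i=[Y_{n-1},X_i]$ to $\widetilde{X_k^i}=[[Y_{n-1},X_i],X_k]$, after which one invokes the inductive hypothesis for $n-1$.
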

  
  \begin{proof} \hfill
  
  As we have already noticed we have 
$$[Y_{1},[X_{1},X_{2}]]={\cal P}_{2}(Y_{1},X_{1},X_{2}]]=[[Y_{1},X_{1}],X_{2}]]+[X_{1},[Y_{1},X_{2}]]$$
and the right hand side is defined in the local Lie algebra if $Y_{1}\in \go{g}_{-1}$, and $X_{1},X_{2}\in \go{g}_{1}$.

Suppose now that the result is true for $n-1$. We start with the monomial
$$[Y_{1},[Y_{2},[\dots,[Y_{n-1},[X_{1},[\dots,[X_{n-1},X_{n}]\dots]\eqno (2-2-3)$$
Consider also the sub-monomial
$$[Y_{n-1},[X_{1},[\dots,[X_{n-1},X_{n}]\dots] $$
Using the Jacobi identity we get 
$$\begin{array}{l}[Y_{n-1},[X_{1},[\dots,[X_{n-1},X_{n}]\dots]\\
= \sum_{i=1}^n[X_{1},[\dots,[[Y_{n-1},X_{i}],[X_{i+1},[\dots,[X_{n-1},X_{n}]\dots]
\end{array}$$
Consider again the sub-monomial $[[Y_{n-1},X_{i}],[X_{i+1},[\dots,[X_{n-1},X_{n}]\dots]$, and set $U_{i}=[Y_{n-1},X_{i}]$ (notice that if $Y_{n-1}\in \go{g}_{-1}$, and $X_{i}\in \go{g}_{1}$, then $U_{i}\in \go{g}_{0}$). From the Jacobi identity we obtain
$$\begin{array}{l}
[[Y_{n-1},X_{i}],[X_{i+1},[\dots,[X_{n-1},X_{n}]\dots]\\
=[U_{i},[X_{i+1},[\dots,[X_{n-1},X_{n}]\dots]\\
= \sum_{k=i+1}^n[X_{i+1},[\dots,[[U_{i},X_{k}],[\dots,[X_{n-1},X_{n}]\dots]\\
= \sum_{k=i+1}^n[X_{i+1},[\dots,[\widetilde{X_{k}^{i}},[\dots,[X_{n-1},X_{n}]\dots]
\end{array}
$$
where $\widetilde{X_{k}^{i}}=[U_{i},X_{k}]=[[Y_{n-1},X_{i}],X_{k}]$ is again a monomial which makes sense in $\Gamma$.
Hence we have obtained 
$$\begin{array}{l}
[Y_{n-1},[X_{1},[\dots,[X_{n-1},X_{n}]\dots]\\
=\sum_{i=1}^n\sum_{k=i+1}^n [X_{1},[X_{2},[\dots,[X_{i-1},[X_{i+1},[\dots,[\widetilde{X_{k}^{i}},[\dots,[X_{n-1},X_{n}]\dots]
\end{array}$$
and finally   we get:

$$\begin{array}{l}
[Y_{1},[Y_{2},[\dots,[Y_{n-1},[X_{1},[\dots,[X_{n-1},X_{n}]\dots]\\
=\displaystyle\sum_{i=1}^n\sum_{k=i+1}^n [Y_{1},[Y_{2},\dots[Y_{n-2}, [X_{1},\dots[X_{i-1},[X_{i+1},[\dots[\widetilde{X_{k}^{i}},[\dots[X_{n-1},X_{n}]\dots]

\end{array}$$
But this last expression is a sum of monomials of type $(2-2-3)$  with one $"Y"$ and one $"X"$ less and therefore 
$$\begin{array}{l}
[Y_{1},[Y_{2},[\dots,[Y_{n-1},[X_{1},[\dots,[X_{n-1},X_{n}]\dots]\\
=\displaystyle\sum_{i=1}^n\sum_{k=i+1}^n {\cal P}_{n-1}(Y_{1},\dots,Y_{n-2},X_{1},\dots,X_{i-1},X_{i+1},\dots,\widetilde{X_{k}^{i}},\dots,X_{n}).
\end{array}$$
 As $\widetilde{X_{k}^{i}}= [[Y_{n-1},X_{i}],X_{k}]$ we obtain by  induction that each 
 
 ${\cal P}_{n-1}(Y_{1},\dots,Y_{n-2},X_{1},\dots,X_{i-1},X_{i+1},\dots,\widetilde{X_{k}^{i}},\dots,X_{n})$ is a sum of monomials which make sense in $\Gamma$ if $X_{i}\in \go{g}_{1}$ and $Y_{j}\in \go{g}_{-1}$.

\end{proof}

Denote by $\go{g}_{max}(\Gamma)=\oplus _{i\in \Z}(\go{g}_{max}(\Gamma))_{i}$ the grading in $\go{g}_{max}(\Gamma)$. Remember that $(\go{g}_{max}(\Gamma))_{i}=\go{g}_{i}$ for $i=-1,0,1$

\begin{lemma} \label{lemme-ideal}
 Let $n\geq 2$. The polynomial identity ${\cal P}_{n}(Y_{1},\dots,Y_{n-1},X_{1}, \dots,X_{n})=0$ is satisfied in $\Gamma$ for $X_{i}\in \go{g}_{1}$ and $Y_{j}\in \go{g}_{-1}$ if and only if  the vector space
 $$J_{n}^+= \sum_{k=0}^{n-2}(\ad \go{g}_{-1})^k(\oplus_{i\geq n}(\go{g}_{max}(\Gamma))_{i})$$
 is a graded ideal of $\go{g}_{max}(\Gamma)$, contained in $\oplus_{i\geq 2} (\go{g}_{max}(\Gamma))_{i}$
\end{lemma}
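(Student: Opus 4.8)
The plan is to prove the equivalence by analyzing the structure of the maximal algebra. First I would recall that by Theorem~\ref{th-Kac}, $\go{g}_{max}(\Gamma)=F(\go{g}_{-1})\oplus \go{g}_{0}\oplus F(\go{g}_{1})$, so the positive part $\oplus_{i\geq 1}(\go{g}_{max}(\Gamma))_{i}$ is precisely the free Lie algebra $F(\go{g}_{1})$, graded by length of brackets in the generators from $\go{g}_{1}$. A general element of $F(\go{g}_{1})$ of degree $m$ is a linear combination of iterated brackets $[X_{1},[X_{2},[\dots,[X_{m-1},X_{m}]\dots]$ with $X_{j}\in\go{g}_{1}$ (left-normed monomials span, by the Jacobi identity). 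The key computation underlying Proposition~\ref{prop-poly-P_{n}}, applied inside $\go{g}_{max}(\Gamma)$, shows that for $Y_{1},\dots,Y_{n-1}\in\go{g}_{-1}$ and such a degree-$n$ monomial, the element $[Y_{1},[\dots,[Y_{n-1},[X_{1},[\dots,X_{n}]\dots]$ equals ${\cal P}_{n}(Y_{1},\dots,Y_{n-1},X_{1},\dots,X_{n})$, which is a sum of monomials each of which "makes sense in $\Gamma$", i.e.\ lies in $\Gamma$ itself.

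Next I would establish the forward direction. Assume ${\cal P}_{n}=0$ on $\Gamma$. I want to show $J_{n}^{+}=\sum_{k=0}^{n-2}(\ad\go{g}_{-1})^{k}(\oplus_{i\geq n}(\go{g}_{max}(\Gamma))_{i})$ is a graded ideal contained in $\oplus_{i\geq 2}(\go{g}_{max}(\Gamma))_{i}$. The containment is immediate: each application of $\ad\go{g}_{-1}$ lowers degree by one, and starting from degree $\geq n$ with at most $n-2$ such applications lands in degree $\geq 2$. For the ideal property, since $\go{g}_{max}(\Gamma)$ is generated by $\Gamma$, it suffices to check that $[\go{g}_{-1},J_{n}^{+}]$, $[\go{g}_{0},J_{n}^{+}]$, $[\go{g}_{1},J_{n}^{+}]\subset J_{n}^{+}$. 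For $\ad\go{g}_{0}$: the positive part is a $\go{g}_{0}$-submodule and each graded piece $(\ad\go{g}_{-1})^{k}(\oplus_{i\geq n}\cdots)$ is stable, using that $[\go{g}_{0},\go{g}_{-1}]\subset\go{g}_{-1}$ and the Jacobi identity. For $\ad\go{g}_{-1}$: applying $\ad\go{g}_{-1}$ to $(\ad\go{g}_{-1})^{k}(\cdots)$ gives $(\ad\go{g}_{-1})^{k+1}(\cdots)$, which is in $J_{n}^{+}$ as long as $k+1\leq n-2$; the only issue is the top layer $(\ad\go{g}_{-1})^{n-2}(\oplus_{i\geq n}\cdots)$, where one more bracket with $\go{g}_{-1}$ produces, on the lowest-degree generators (degree exactly $n$), precisely the expressions governed by ${\cal P}_{n}$ — and the hypothesis ${\cal P}_{n}=0$ forces these to vanish (hence stay in $J_{n}^{+}$ trivially), while on degree-$(n+1)$-and-higher generators the result lies in degree $\geq 3$ and can be rewritten, via the same Jacobi manipulations as in Proposition~\ref{prop-poly-P_{n}}, to stay inside $J_{n}^{+}$. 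For $\ad\go{g}_{1}$: bracketing with $\go{g}_{1}$ raises the degree by one; using the Jacobi identity $[\go{g}_{1},(\ad\go{g}_{-1})^{k}Z]\subset (\ad\go{g}_{-1})^{k-1}[\go{g}_{1},[\go{g}_{-1},\cdots]]+\cdots$, one reduces $k$ and absorbs the terms $[\go{g}_{1},\go{g}_{-1}]\subset\go{g}_{0}$, checking everything lands back in $J_{n}^{+}$.

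For the converse, assume $J_{n}^{+}$ is a graded ideal (contained in $\oplus_{i\geq 2}$). Take $Y_{i}\in\go{g}_{-1}$, $X_{j}\in\go{g}_{1}$. The element $[X_{1},[X_{2},[\dots,[X_{n-1},X_{n}]\dots]$ lies in $(\go{g}_{max}(\Gamma))_{n}\subset J_{n}^{+}$. Applying $\ad Y_{n-1},\dots,\ad Y_{1}$ successively: after applying $n-2$ of them we are still in $J_{n}^{+}$ (each application corresponds to increasing $k$, and we only reach $k=n-2$), and the element sits in degree $2$. Applying the last $\ad Y_{1}$ we land in degree $1$, i.e.\ in $\go{g}_{1}\subset\Gamma$. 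But $\go{g}_{1}\cap J_{n}^{+}\subset\go{g}_{1}\cap(\oplus_{i\geq 2}\cdots)=\{0\}$ — wait, the degree-$1$ element is obtained as $[\go{g}_{-1},(\text{degree }2\text{ element of }J_{n}^{+})]$, and I must argue this bracket is $0$. Here is the point: actually the full iterated bracket $[Y_{1},[\dots,[Y_{n-1},[X_{1},[\dots,X_{n}]\dots]$ is, by Proposition~\ref{prop-poly-P_{n}}, equal to ${\cal P}_{n}(Y_{1},\dots,X_{n})\in\Gamma$, while simultaneously it equals $(\ad Y_{1})(\text{element of }J_{n}^{+}\text{ of degree }\geq 2)$. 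Since $J_{n}^{+}$ is an ideal, $(\ad Y_{1})$ of it stays in $J_{n}^{+}$, so ${\cal P}_{n}\in J_{n}^{+}\cap\Gamma$; and $J_{n}^{+}\subset\oplus_{i\geq 2}$ meets $\Gamma=\go{g}_{-1}\oplus\go{g}_{0}\oplus\go{g}_{1}$ only in $\{0\}$. Hence ${\cal P}_{n}=0$ in $\go{g}_{max}(\Gamma)$, and since the defining relations of $\go{g}_{max}(\Gamma)$ restricted to $\Gamma$ are exactly those of $\Gamma$, this means ${\cal P}_{n}=0$ holds as an identity in $\Gamma$. The main obstacle, and where I would spend the most care, is the forward direction's verification that $J_{n}^{+}$ is closed under $\ad\go{g}_{-1}$ at the top layer $k=n-2$ and under $\ad\go{g}_{1}$ — i.e.\ showing the Jacobi-identity bookkeeping of Proposition~\ref{prop-poly-P_{n}} can be run backwards to keep brackets inside the prescribed sum of $(\ad\go{g}_{-1})^{k}$-images rather than merely inside the positive part; this is the combinatorial heart of the lemma.
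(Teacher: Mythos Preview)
Your approach is essentially the same as the paper's, and your converse argument matches it exactly. However, you are overcomplicating the forward direction and misidentifying the ``combinatorial heart''.

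The paper disposes of stability under $\ad(\oplus_{i\geq 0}(\go{g}_{max}(\Gamma))_{i})$ in one sentence (``clear from the definition''); your sketched Jacobi induction for $\ad\go{g}_{0}$ and $\ad\go{g}_{1}$ is fine but unnecessary to spell out. More importantly, for the $\ad\go{g}_{-1}$ check at the top layer $k=n-2$, you worry about the degree-$(\geq n{+}1)$ part and invoke unspecified ``Jacobi manipulations''. No such manipulations are needed: since $(\ad\go{g}_{-1})(\oplus_{i\geq n+1}(\go{g}_{max}(\Gamma))_{i})\subset \oplus_{i\geq n}(\go{g}_{max}(\Gamma))_{i}$, one has automatically
\[
(\ad\go{g}_{-1})^{n-1}\bigl(\oplus_{i\geq n+1}(\go{g}_{max}(\Gamma))_{i}\bigr)\subset (\ad\go{g}_{-1})^{n-2}\bigl(\oplus_{i\geq n}(\go{g}_{max}(\Gamma))_{i}\bigr)\subset J_{n}^{+}.
\]
Thus the \emph{only} thing to check is $(\ad\go{g}_{-1})^{n-1}(\go{g}_{max}(\Gamma))_{n}=\{0\}$, and that is exactly ${\cal P}_{n}=0$. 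This is the paper's single clean reduction; once you see it, there is no combinatorial bookkeeping left.
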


\begin{proof} \hfill

Suppose that the identity ${\cal P}_{n}(Y_{1},\dots,Y_{n-1},X_{1}, \dots,X_{n})=0$ is satisfied in $\Gamma$.
It is clear from the definition that $J_{n}^+$ is stable under $\ad( \oplus_{i=0}^\infty (\go{g}_{max}(\Gamma))_{i}$. It remains to prove that $\ad{Y}(J_{n}^+)\subset J_{n}^+$ for $Y\in \go{g}_{-1}$. We have 
$$[Y, (\ad \go{g}_{-1})^k(\oplus_{i\geq n}(\go{g}_{max}(\Gamma))_{i})]\subset (\ad \go{g}_{-1})^{k+1}(\oplus_{i\geq n}(\go{g}_{max}(\Gamma))_{i}).$$
Therefore it is enough to show that $$(\ad \go{g}_{-1})^{n-1}(\go{g}_{max}(\Gamma))_{n}=\{0\} \eqno (*)$$
From the definition of a graded Lie algebra, $\go{g}_{max}(\Gamma)$ is generated by its local part $\Gamma$, and this implies that $(\go{g}_{max}(\Gamma))_{n}$ is the space of linear combinations of Lie monomials of the form $[X_{1},[X_{2},\dots[X_{n-1},X_{n}]\dots]$ with $X_{i}\in\go{g}_{1}$. As the identity ${\cal P}_{n}(Y_{1},\dots,Y_{n-1},X_{1}, \dots,X_{n})= [Y_{1},[Y_{2},[\dots[Y_{n-1},[X_{1},[X_{2},[\dots[X_{n-1},X_{n}]\dots]$ holds in $\go{g}_{max}(\Gamma)$, we obtain condition $(*)$.  

Conversely suppose that $J_{n}^+$ is a graded ideal of $\go{g}_{max}(\Gamma)$. Then the identity $(*)$ above holds. This implies that the identity  ${\cal P}_{n}(Y_{1},\dots,Y_{n-1},X_{1}, \dots,X_{n})=0$ is true.

\end{proof}

\begin{rem}\label{rem-finitude-symetrie}\hfill

Of course, symmetrically, if the identity ${\cal P}_{n}(X_{1},\dots,X_{n-1},Y_{1},\dots,Y_{n})=0$ is verified for any  elements $X_{i}\in \go{g}_{1}$ and $Y_{j}\in \go{g}_{-1}$, then  $$J_{n}^-= \sum_{k=0}^{n-2}(\ad \go{g}_{1})^k(\oplus_{i \leq -n}(\go{g}_{max}(\Gamma))_{i})$$
is a graded ideal of  $\go{g}_{max}(\Gamma)$, contained in $\oplus_{i\leq -2} (\go{g}_{max}(\Gamma))_{i}$

\end{rem}

\begin{theorem}\label{th-finitude-generale}\hfill

Let $\Gamma$ be a local Lie algebra. Denote by $\go{g}_{min}(\Gamma)=\oplus _{i\in \Z}(\go{g}_{min}(\Gamma))_{i}$ the grading in $\go{g}_{min}(\Gamma)$.

$1)$ 
The following conditions are equivalent:

$a)$ $(\go{g}_{min}(\Gamma))_{i}=\{0\}$ for $i\geq n$

$b)$ The identity ${\cal P}_{n}(Y_{1},\dots,Y_{n-1},X_{1}, \dots,X_{n})=0$ holds in $\Gamma$, for all $X_{i}\in \go{g}_{1}$ and all $Y_{j}\in \go{g}_{-1}$.

$2)$ 
The following conditions are equivalent:

$a)$ $(\go{g}_{min}(\Gamma))_{i}=\{0\}$ for $i\leq -n$

$b)$ The identity ${\cal P}_{n}(X_{1},\dots,X_{n-1},Y_{1}, \dots,Y_{n})=0$ holds in $\Gamma$, for all $X_{i}\in \go{g}_{1}$ and all $Y_{j}\in  \go{g}_{-1}$.

$3)$ $\dim \go{g}_{min}(\Gamma) < +\infty$ if and only there exist $m, n\in \Z$ such that the identities  ${\cal P}_{m}(Y_{1},\dots,Y_{n-1},X_{1}, \dots,X_{n})=0$ and ${\cal P}_{n}(X_{1},\dots,X_{n-1},Y_{1}, \dots,Y_{n})=0$ hold in $\Gamma$, for all $X_{i}\in  \go{g}_{1}$ and all $Y_{j}\in \go{g}_{-1}$.

\end{theorem}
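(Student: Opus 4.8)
The plan is to derive everything from Lemma~\ref{lemme-ideal} (together with Remark~\ref{rem-finitude-symetrie}) and from the description in Theorem~\ref{th-Kac} of $\go{g}_{min}(\Gamma)$ as $\go{g}_{max}(\Gamma)/J_{max}$, where $J_{max}$ is the largest graded ideal of $\go{g}_{max}(\Gamma)$ meeting $\Gamma$ trivially. Parts $1)$ and $2)$ are mirror images of one another (exchange the roles of $\go{g}_{1}$ and $\go{g}_{-1}$, replacing Lemma~\ref{lemme-ideal} by Remark~\ref{rem-finitude-symetrie}), and part $3)$ will follow formally from $1)$ and $2)$; so the real content is the equivalence $a)\Leftrightarrow b)$ of part $1)$, which I would prove in two directions.

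For $b)\Rightarrow a)$: assuming ${\cal P}_{n}=0$ holds in $\Gamma$, Lemma~\ref{lemme-ideal} says that $J_{n}^{+}=\sum_{k=0}^{n-2}(\ad\go{g}_{-1})^{k}(\oplus_{i\geq n}(\go{g}_{max}(\Gamma))_{i})$ is a graded ideal of $\go{g}_{max}(\Gamma)$ contained in $\oplus_{i\geq 2}(\go{g}_{max}(\Gamma))_{i}$. Since $\Gamma$ sits in degrees $-1,0,1$ this forces $J_{n}^{+}\cap\Gamma=\{0\}$, hence $J_{n}^{+}\subset J_{max}$ by maximality of $J_{max}$. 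The summand $k=0$ of $J_{n}^{+}$ is $\oplus_{i\geq n}(\go{g}_{max}(\Gamma))_{i}$, so every homogeneous component of degree $\geq n$ of $\go{g}_{max}(\Gamma)$ lies in $J_{max}$ and therefore vanishes in $\go{g}_{min}(\Gamma)=\go{g}_{max}(\Gamma)/J_{max}$; this is exactly $a)$.

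For $a)\Rightarrow b)$: because $\go{g}_{min}(\Gamma)$ is generated by its local part, $(\go{g}_{min}(\Gamma))_{i}=\{0\}$ for all $i\geq n$ is equivalent to $(\go{g}_{min}(\Gamma))_{n}=\{0\}$, i.e. to $(\go{g}_{max}(\Gamma))_{n}\subset J_{max}$. Fix $Y_{j}\in\go{g}_{-1}$, $X_{i}\in\go{g}_{1}$. As recalled in the proof of Lemma~\ref{lemme-ideal}, $(\go{g}_{max}(\Gamma))_{n}$ is spanned by the left-normed monomials $[X_{1},[X_{2},\dots,[X_{n-1},X_{n}]\dots]$, which hence lie in $J_{max}$; applying $\ad Y_{n-1},\dots,\ad Y_{1}$ and using that $J_{max}$ is an ideal, the element $[Y_{1},[\dots,[Y_{n-1},[X_{1},[\dots,[X_{n-1},X_{n}]\dots]$ of $\go{g}_{max}(\Gamma)$ lies in $J_{max}$. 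On the one hand it has degree $1$, hence lies in $(\go{g}_{max}(\Gamma))_{1}=\go{g}_{1}\subset\Gamma$, hence in $J_{max}\cap\Gamma=\{0\}$. On the other hand, by Proposition~\ref{prop-poly-P_{n}} it equals ${\cal P}_{n}(Y_{1},\dots,Y_{n-1},X_{1},\dots,X_{n})$, and since each monomial $U_{\alpha}$ occurring in ${\cal P}_{n}$ already makes sense in $\Gamma$, and the inclusion $\Gamma\hookrightarrow\go{g}_{max}(\Gamma)$ is a morphism of local Lie algebras, the value of ${\cal P}_{n}$ computed in $\Gamma$ is the same as the value computed in $\go{g}_{max}(\Gamma)$. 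Hence ${\cal P}_{n}(Y_{1},\dots,Y_{n-1},X_{1},\dots,X_{n})=0$ already in $\Gamma$, which is $b)$. The one step requiring genuine care is exactly this identification — matching the value of the Lie polynomial ${\cal P}_{n}$ taken inside $\Gamma$ with that of the left-normed monomial taken inside $\go{g}_{max}(\Gamma)$, and then invoking $J_{max}\cap\Gamma=\{0\}$; everything else is degree bookkeeping.

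Finally, for $3)$: each homogeneous component $(\go{g}_{min}(\Gamma))_{i}$ is finite dimensional by definition, and — again by generation from the local part — the set of positive (resp. negative) degrees in which $\go{g}_{min}(\Gamma)$ is nonzero is an initial segment (if $(\go{g}_{min}(\Gamma))_{j}=\{0\}$ for some $j\geq 1$ then $(\go{g}_{min}(\Gamma))_{i}=\{0\}$ for all $i\geq j$, since every $i$-fold left-normed bracket of $\go{g}_{1}$ contains a $j$-fold one). Therefore $\dim\go{g}_{min}(\Gamma)<+\infty$ if and only if there exist integers $n,m\geq 2$ with $(\go{g}_{min}(\Gamma))_{i}=\{0\}$ for $i\geq n$ and for $i\leq -m$, which by parts $1)$ and $2)$ is equivalent to the validity in $\Gamma$ of the identities ${\cal P}_{n}(Y_{1},\dots,Y_{n-1},X_{1},\dots,X_{n})=0$ and ${\cal P}_{m}(X_{1},\dots,X_{m-1},Y_{1},\dots,Y_{m})=0$ for all $X_{i}\in\Gamma_{1}$ and $Y_{j}\in\Gamma_{-1}$.
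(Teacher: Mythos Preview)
Your proof is correct and follows essentially the same approach as the paper: the direction $b)\Rightarrow a)$ is identical (Lemma~\ref{lemme-ideal} gives $J_{n}^{+}\subset J_{max}$), and for $a)\Rightarrow b)$ you phrase in $\go{g}_{max}(\Gamma)$ via $J_{max}\cap\Gamma=\{0\}$ what the paper phrases more tersely in $\go{g}_{min}(\Gamma)$ directly (the nested bracket vanishes there, hence so does ${\cal P}_{n}$). Your version is more explicit about why the identity descends to $\Gamma$, which is a plus.
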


\begin{proof}\hfill

Lets us prove $1)$. 

$a) \Longrightarrow b)$: if $(\go{g}_{min}(\Gamma))_{n}=\{0\}$, then $[X_{1},[\dots,[X_{n-1},X_{n}]\dots]]=0$. And hence ${\cal P}_{n}(Y_{1},\dots,Y_{n-1},X_{1}, \dots,X_{n})=[Y_{1},[Y_{2},[\dots,[Y_{n-1},[X_{1},[\dots,[X_{n-1},X_{n}]\dots]]=0$.  

$b) \Longrightarrow a)$: suppose that the identity ${\cal P}_{n}(Y_{1},\dots,Y_{n-1},X_{1}, \dots,X_{n})=0$ holds in $\Gamma$.     Let $J_{n}^+= \sum_{k=0}^{n-2}(\ad \go{g}_{-1})^k(\oplus_{i\geq n}\go{g}_{max}(\Gamma))_{i})$ the ideal that we considered in Lemma \ref{lemme-ideal}. Obviously $J_{n}^+\cap \Gamma =\{0\}$ and $\oplus_{i\geq n}(\go{g}_{max}(\Gamma))_{i}\subset J_{n}^+$

Let $J_{max}$ be the maximal graded ideal of $\go{g}_{max}(\Gamma)$ which intersects $\Gamma$ trivially (see Theorem \ref{th-Kac}).  As $\oplus_{i\geq n}(\go{g}_{max}(\Gamma))_{i}\subset J_{n}^+\subset J_{max}$, and as $\go{g}_{min}(\Gamma)=\go{g}_{max}(\Gamma)/J_{max}$, we obtain that $(\go{g}_{min}(\Gamma))_{i}=\{0\}$ for $i\geq n$.

The proof of $2)$ is similar, using the ideal $J_{n}^-$ introduced in Remark \ref{rem-finitude-symetrie}.

Assertion $3)$ is an immediate consequence of $1)$ and $2)$.

\end{proof}

\begin{example}\label{ex-P3}
A direct computation  shows that 
$$\begin{array}{rl}
{\cal P}_{3}(Y_{1},Y_{2},X_{1},X_{2},X_{3})&=[[Y_{1},[[Y_{2},X_{1}],X_{2}]],X_{3}]+[[[Y_{2},X_{1}],X_{2}],[Y_{1},X_{3}]]\\
{}&+[[Y_{1},X_{2}],[[Y_{2},X_{1}],X_{3}]]+[X_{2},[Y_{1},[[Y_{2},X_{2}],X_{3}]]]\\
{}&+[[Y_{1},X_{1}], [[Y_{2},X_{2}],X_{3}]]+[X_{1},[Y_{1},[[Y_{2},X_{2}],X_{3}]]]\\
{}&+[[Y_{1},X_{1}],[X_{2},[Y_{2},X_{3}]]]+[X_{1},[Y_{1},[X_{2},[Y_{2},X_{3}]]]].
\end{array}$$
\end{example}

   \begin{cor} \label{cor-finitude-special} \hfill
   
    Let $\Gamma$ be a local Lie algebra. Suppose that $\go{g}_{min}(\Gamma)=\oplus_{i\in \Z}$ is endowed with a non-degenerate bilinear symmetric invariant form $B$ such that such that $B(\go{g}_{i}, \go{g}_{j})=0$ when $i+j\neq 0$. Then $\dim \go{g}_{i} =\dim \go{g}_{-i} $ for all $i\in \Z$ and $\dim  \go{g}_{min}(\Gamma)< +\infty$ if and only if there exists an integer $n\geq2$ such that the identity ${\cal P}_{n}(Y_{1},\dots,Y_{n-1},X_{1},\dots,X_{n})$ holds in the local Lie algebra $\Gamma $, where ${\cal P}_{n}$ is the Lie polynomial defined in Proposition \ref{prop-poly-P_{n}}. More precisely in that case $\go{g}_{i}=\{0\}$ for $|i|\geq n$.

  \end{cor}

\vskip 20pt

\section{Local and graded Lie algebras associated to $(\go{g}_{0},B_{0},\rho)$}

\vskip 5pt
\subsection {The local Lie algebra $\Gamma({\go g}_{0},B_0,\rho)$} \hfill
 \vskip5pt
 A {\it quadratic Lie algebra} is a pair $(\go{g}_{0},B_{0})$ where $\go{g}_{0}$ is finite dimensional Lie algebra and where $B_{0}$ is  an  invariant nondegenerate symmetric bilinear form on $\go{g}_{0}$.  The most obvious examples of such algebras are the semi-simple algebras (endowed with the Killing form), the commutative algebras (endowed with any symmetric nondegenerate bilinear form) or more generally the reductive Lie algebras. But there exist more sophisticated examples. In general a quadratic Lie algebra, which is  not a direct sum of two orthogonal ideals,  is shown to be  obtained by a finite number  of so-called {\it double extensions} by either a simple  Lie algebra or a one dimensional Lie algebra.  For details see \cite{Medina-Revoy}.
 
 Let $({\go g}_{0},B_{0})$ be a quadratic Lie algebra. Let $(\rho,V)$ be a finite dimensional representation of ${\go g}_{0}$. Let $(\rho^*,V^*)$ be the contragredient representation. We   will often just denote these modules by  $({\go g}_{0},V)$ and $({\go g}_{0}, V^*)$. Similarly, for $U\in {\go g}_{0}, X\in V, Y\in V^*$ we will often write $U.X$ and $U.Y$ instead of $\rho(U)X$ and $\rho^*(U)Y$.
 Put ${\go g}_{-1}=V^*$ and ${\go g}_{1}=V$. Define also
 $$\Gamma({\go g}_{0},B_0,\rho)= {\go g}_{-1}\oplus {\go g}_{0}\oplus {\go g}_{1}=V^*\oplus {\go g}_{0}\oplus V.$$
 
 Our aim is now to define a structure of local Lie algebra on $\Gamma({\go g}_{0},B_0,\rho)$, such that for $U\in {\go g}_{0}, X\in {\go g}_{1}, Y\in {\go g}_{-1}$, we have $[U,X]=U.X$ and $[U, Y]=U.Y$.

  \begin{definition} \label{def-fundamental-triplets} \hfil

 We call the triplet $(\go{g}_{0}, B_{0}, (\rho,V))$ a fundamental triplet. It consists of the following ingredients: 
 
 {\rm a)} a quadratic Lie algebra $(\go{g}_{0}, B_{0})$.

{ \rm b)} a finite dimensional representation $(\rho,V)$ of $\go{g}_{0}$ on the space $V$.

In order to simplify the notation, we will will sometimes denote the triplet by  $(\go{g}_{0}, B_{0}, \rho)$ or $(\go{g}_{0}, B_{0}, V)$.
  \end{definition}

 \begin{theorem}\label{th-algebre-locale}\hfill
 
 Let $({\go g}_{0},B_{0},\rho)$ be a fundamental triplet.  As before we set:
 $$\Gamma({\go g}_{0},B_0,\rho)= {\go g}_{-1}\oplus {\go g}_{0}\oplus {\go g}_{1}=V^*\oplus {\go g}_{0}\oplus V.$$
 For $U\in {\go g}_{0}, X\in {\go g}_{1}, Y\in {\go g}_{-1}$  define an anticommutative bracket by

 $ a)$ $[U,X]=U.X$, $[U,Y]=U.Y$
 
 $b)$ The element $[X,Y]$ is the unique element of ${\go g}_{0}$ such that for all $U\in {\go g}_{0}$ the following identity holds:
 $$B_{0}([X,Y],U)=Y(U.X)=-(U.Y)(X).$$
  $($The last equality is just the definition of the contragredient representation$)$.
 
 The preceding  bracket defines a structure of a local Lie algebra on $\Gamma({\go g}_{0},B_0,\rho)$.

 \end{theorem}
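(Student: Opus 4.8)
The plan is to check, in order, the three things required by the definition of a local Lie algebra: that condition $b)$ genuinely defines an element $[X,Y]\in\go{g}_{0}$, that the resulting bracket is anticommutative, and that the Jacobi identity $[x,[y,z]]=[[x,y],z]+[y,[x,z]]$ holds whenever all three of its terms are defined.

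First I would settle well-definedness. Since $B_{0}$ is nondegenerate, the linear form $U\longmapsto Y(\rho(U)X)$ on $\go{g}_{0}$ equals $B_{0}(Z,\cdot)$ for a unique $Z\in\go{g}_{0}$, and we set $[X,Y]:=Z$; the map $V\times V^{*}\to\go{g}_{0}$ so obtained is clearly bilinear, and the two expressions in $b)$ agree because, by definition of the contragredient representation, $(\rho^{*}(U)Y)(X)=-Y(\rho(U)X)$. Anticommutativity is then imposed by declaring $[Y,X]:=-[X,Y]$, $[X,U]:=-\rho(U)X$ and $[Y,U]:=-\rho^{*}(U)Y$ for $U\in\go{g}_{0}$, the bracket on $\go{g}_{0}\times\go{g}_{0}$ being the given Lie bracket.

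For the Jacobi identity I would first cut down the number of cases. For homogeneous $x,y,z$ of degrees $a,b,c\in\{-1,0,1\}$, all three terms of the identity are simultaneously defined only if $|a+b|,|a+c|,|b+c|$ and $|a+b+c|$ are all $\le 1$; a short inspection shows the multiset $\{a,b,c\}$ must then be one of $\{0,0,0\}$, $\{0,0,1\}$, $\{0,0,-1\}$, $\{0,1,-1\}$ (any configuration with two elements of $V$, or two of $V^{*}$, forces one of the required brackets into degree $\pm 2$, which does not exist in $\Gamma$). The case $\{0,0,0\}$ is the Jacobi identity of $\go{g}_{0}$, and the cases $\{0,0,\pm1\}$ reduce, after expanding every bracket by $a)$ and using antisymmetry, to the fact that $\rho$ (resp.\ $\rho^{*}$) is a representation. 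The only genuinely new case is $\{0,1,-1\}$: one $U\in\go{g}_{0}$, one $X\in V$, one $Y\in V^{*}$. By antisymmetry of the bracket all six orderings of the Jacobi identity in this case are equivalent to the single relation, valued in $\go{g}_{0}$,
$$[U,[X,Y]]=[\rho(U)X,Y]+[X,\rho^{*}(U)Y].$$

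I would verify this last relation by pairing both sides with an arbitrary $W\in\go{g}_{0}$ under $B_{0}$ and invoking three ingredients: the defining property in $b)$, the invariance $B_{0}([a,b],c)=B_{0}(a,[b,c])$, and the identity $\rho(W)\rho(U)-\rho(U)\rho(W)=\rho([W,U])$. Concretely, the left side pairs with $W$ to give $-B_{0}([X,Y],[U,W])=-Y\!\left(\rho([U,W])X\right)$, while the right side gives $Y\!\left(\rho(W)\rho(U)X\right)-Y\!\left(\rho(U)\rho(W)X\right)$, which is the same scalar; nondegeneracy of $B_{0}$ then upgrades this equality of scalars (for every $W$) to the desired equality in $\go{g}_{0}$. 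The main — essentially the only — obstacle is this bilinear-form bookkeeping: one must keep the composition order of the operators $\rho(U),\rho(W)$ and the various signs straight, and carry out the (otherwise routine) degree count carefully enough to be sure that the configuration $\{0,1,-1\}$ together with the two "trivial" families exhausts the instances of the Jacobi identity that actually need checking. Once these are in hand the theorem follows.
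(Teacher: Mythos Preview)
Your proof is correct and follows essentially the same route as the paper's: the same three nontrivial Jacobi configurations $\{0,0,\pm1\}$ and $\{0,1,-1\}$ are isolated, the first two are discharged via the representation property of $\rho$ and $\rho^{*}$, and the mixed case is checked by pairing both sides against an arbitrary element of $\go{g}_{0}$ with $B_{0}$ and using invariance plus $\rho([U,W])=[\rho(U),\rho(W)]$. You are slightly more explicit than the paper about the well-definedness of $[X,Y]$ and the degree count ruling out the remaining cases, but the substance of the argument is identical.
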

 \begin{proof} We must prove that the Jacobi identity is verified each time the brackets make sense. This means that we have to prove the following identities.
 
   $\alpha)$ $\forall \, U_{1}, U_{2}\in {\go g}_{0}$, $\forall X\in {\go g}_{1}$,
 $[X,[U_{1},U_{2}]]=[[X,U_{1}],U_{2}]+[U_{1},[X,U_{2}]]$.
 
   $\beta)$ $\forall \, U_{1}, U_{2}\in {\go g}_{0}$, $\forall Y\in {\go g}_{-1}$,
 $[Y,[U_{1},U_{2}]]=[[Y,U_{1}],U_{2}]+[U_{1},[Y,U_{2}]]$.
 
    $\gamma)$ $\forall \, X\in {\go g}_{1}$, $\forall \, Y\in {\go g}_{-1}$, $\forall \, Z\in {\go g}_{0}$, $[Z,[X,Y]]=[[Z, X],Y]+[X,[Z, Y]]$.
   \vskip 3pt
   We have:
  
   $[X,[U_{1},U_{2}]]=-[[U_{1},U_{2}],X]=-[U_{1},U_{2}].X=-U_{1}.(U_{2}.X)+U_{2}.(U_{1}.X)$.
   
   On the other hand we have:
   
   $[[X,U_{1}],U_{2}]= [U_{2},[U_{1},X]]=U_{2}.(U_{1}.X)$ and $[U_{1},[X,U_{2}]]=-[U_{1},[U_{2},X]]=-U_{1}.(U_{2}.X)$.
   This proves $\alpha).$ The proof of the identity $\beta)$ is similar.
   
 Let us now consider the identity $\gamma)$. We set $L=[Z,[X,Y]]$, $R_{1}=[[Z, X],Y]$, $R_{2}=[X,[Z, Y]]$, and $R=R_{1}+R_{2}$. As $L,R_{1},R_{2},R\in {\go g}_{0}$, in order to prove $\gamma)$ it will be enough to show that for all $U\in {\go g}_{0}$, we have $B_{0}(L,U)=B_{0}(R,U)$. Using the invariance of $B_{0}$ and  definition $b)$ we get:
 
 $\begin{array}{rl}B_{0}(L,U)&=B_{0}([Z,[X,Y]],U)=-B_{0}([[X,Y],Z],U)= -B_{0}([X,Y],[Z,U])\\
 {}&= -Y([Z,U].X)\\
 {}&=-Y(Z.(U.X)-U.(Z.X)).
 \end{array}$

 On the other hand, using again definition $b)$, we have also:

$ \begin{array}{rl}
B_{0}(R_{1},U)&=B_{0}([[Z, X],Y],U)= Y(U.[Z,X])=Y(U.(Z.X)) 
\end{array}$

and $ \begin{array}{rl}
B_{0}(R_{2},U)&=B_{0}([X,[Z, Y]],U)= [Z,Y](U.X)=Z.Y(U.X)=-Y(Z.(U.X)).
\end{array}$

Hence $B_{0}(R,U)=Y(U.(Z.X)-Z.(U.X))=B_{0}(L,U).$

 \end{proof}

 Hence we have associated a local Lie algebra to the data $\go{g}_{0}, B_{0}, (\rho, V)$.

 \begin{notation}\label{not-Gamma(V)}
 For convenience we will sometimes denote this local algebra by $\Gamma(\go{g}_{0},B_{0},V) $ instead of $\Gamma(\go{g}_{0},B_{0},\rho)$.
 \end{notation}

\vskip 5pt
\subsection {Isomorphisms of the local parts and dependence on $(B_{0},\rho)$} \hfill
 \vskip5pt
    We will first  determine  necessary and sufficient conditions on the data $(\go{g}_{0}^i, B_{0}^i,\rho_{i})$ $(i=1,2)$ for the corresponding  local Lie algebras to be isomorphic. 
   
    Let $\Psi:\Gamma(\go{g}_{0}^1, B_{0}^1,\rho_{1})\longrightarrow \Gamma(\go{g}_{0}^2, B_{0}^2,\rho_{2})$ be an graded isomorphism between the two underlying vector spaces. Set $A= \Psi_{|_{\go{g}_{0}^1}}: \go{g}_{0}^1\longrightarrow \go{g}_{0}^2$,  $\gamma=\Psi_{|_{V_{1}}}:V_{1}\longrightarrow V_{2}$ and $\widetilde\gamma=\Psi_{|_{V_{1}^*}}:V_{1}^*\longrightarrow V_{2}^*$. With these notations we will set $\Psi= (\widetilde\gamma,A, \gamma)$. Our aim is  to determine under which conditions on   $\widetilde\gamma,A, \gamma$, the map $\Psi$ is an isomorphism of local Lie algebras. An obvious condition is of course that $A$ is an isomorphism of Lie algebras.
    
    \begin{prop}\label{prop-CNS-iso}\hfill

    Let $\widetilde\gamma :V_{1}^*\longrightarrow V_{2}^*$ and $ \gamma :V_{1}\longrightarrow V_{2}$ be two isomorphisms of vector spaces and let  $A: \go{g}_{0}^1\longrightarrow \go{g}_{0}^2$ be an isomorphism of Lie algebras. Then $$\Psi=(\widetilde\gamma,A, \gamma): \Gamma(\go{g}_{0}^1, B_{0}^1,\rho_{1})\longrightarrow \Gamma(\go{g}_{0}^2, B_{0}^2,\rho_{2})$$
    is an isomorphism of local Lie algebras if and only the following three conditions hold $($for all $ U_{1}\in \go{g}_{0}^1,X_{1}\in V_{1},  Y_{1}\in V_{1}^*$$ ):$
    
    \hskip 20pt $1)$\hskip 5pt $ \rho_{2}(A(U_{1}))\circ\gamma=\gamma\circ\rho_{1}(U_{1})$,
    
    \hskip 20pt $2)$\hskip 5pt $ \rho_{2}^*(A(U_{1}))\circ \widetilde \gamma=\widetilde \gamma\circ\rho_{1}^*(U_{1})$,

      \hskip 20pt $3)$\hskip 5pt $B_{0}^2(A([X_{1},Y_{1}]), A(U_{1}))=B_{0}^1([^t\widetilde \gamma\circ\gamma(X_{1}),Y_{1}], U_{1})$.
    \end{prop}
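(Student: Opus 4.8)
\emph{Plan of proof.} Since $\Psi=(\widetilde\gamma,A,\gamma)$ is a graded linear bijection, it is an isomorphism of local Lie algebras if and only if it is a homomorphism of local Lie algebras (the inverse of a bijective homomorphism is automatically a homomorphism, the two algebras carrying the same grading). So the plan is to check that $\Psi$ preserves each of the partial brackets; as $\Psi$ is graded these are the brackets of type $[\go{g}_{0},\go{g}_{0}]$, $[\go{g}_{0},V]$, $[\go{g}_{0},V^{*}]$ and $[V,V^{*}]$ (brackets of two elements of $V$, or of two elements of $V^{*}$, are not part of the local structure). The bracket on $\go{g}_{0}^{1}\times\go{g}_{0}^{1}$ is preserved exactly because $A$ is assumed to be an isomorphism of Lie algebras.

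By part $a)$ of Theorem \ref{th-algebre-locale}, the relation $\Psi([U_{1},X_{1}])=[\Psi(U_{1}),\Psi(X_{1})]$ reads $\gamma(\rho_{1}(U_{1})X_{1})=\rho_{2}(A(U_{1}))\gamma(X_{1})$, which is condition $1)$, and symmetrically $\Psi([U_{1},Y_{1}])=[\Psi(U_{1}),\Psi(Y_{1})]$ is condition $2)$. It therefore remains, \emph{assuming $1)$ and $2)$}, to determine when the bracket $V\times V^{*}\to\go{g}_{0}$ is preserved.

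Fix $X_{1}\in V_{1}$ and $Y_{1}\in V_{1}^{*}$. Both $A([X_{1},Y_{1}])$ and $[\gamma(X_{1}),\widetilde\gamma(Y_{1})]$ lie in $\go{g}_{0}^{2}$, so by nondegeneracy of $B_{0}^{2}$ and surjectivity of $A$ they coincide if and only if
$$B_{0}^{2}\big(A([X_{1},Y_{1}]),A(U_{1})\big)=B_{0}^{2}\big([\gamma(X_{1}),\widetilde\gamma(Y_{1})],A(U_{1})\big)=\widetilde\gamma(Y_{1})\big(\rho_{2}(A(U_{1}))\gamma(X_{1})\big)$$
for all $U_{1}\in\go{g}_{0}^{1}$, the last equality being part $b)$ of Theorem \ref{th-algebre-locale}. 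The idea is to rewrite the rightmost term using only the data of $\Gamma(\go{g}_{0}^{1},B_{0}^{1},\rho_{1})$. Set $\Phi={}^{t}\widetilde\gamma\circ\gamma\colon V_{1}\to V_{1}$, so that $\widetilde\gamma(Y_{1})(\gamma(v))=Y_{1}(\Phi(v))$ for $v\in V_{1}$ by definition of the transpose. Using first condition $1)$ and then this identity, the rightmost term becomes $Y_{1}\big(\Phi(\rho_{1}(U_{1})X_{1})\big)$. The key step — where \emph{both} $1)$ and $2)$ are needed — is to show that $\Phi$ commutes with $\rho_{1}(\go{g}_{0}^{1})$: substituting $1)$ into condition $2)$ (both sides applied to $Y_{1}$ and then evaluated at vectors of the form $\gamma(v)$) together with the definition of the contragredient representation $\rho_{1}^{*}$ gives $Y_{1}(\Phi\rho_{1}(U_{1})X_{1})=Y_{1}(\rho_{1}(U_{1})\Phi X_{1})$ for every $Y_{1}$, whence $\Phi\rho_{1}(U_{1})=\rho_{1}(U_{1})\Phi$. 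Consequently the rightmost term equals $Y_{1}\big(\rho_{1}(U_{1})\Phi(X_{1})\big)=B_{0}^{1}\big([\Phi(X_{1}),Y_{1}],U_{1}\big)$, and the displayed identity, required for all $U_{1},X_{1},Y_{1}$, is precisely condition $3)$.

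Putting the three steps together, $\Psi$ is a homomorphism — hence, being bijective, an isomorphism — of local Lie algebras if and only if conditions $1)$, $2)$ and $3)$ hold. I expect the only genuinely non-formal point to be the commutation relation $\Phi\rho_{1}(U_{1})=\rho_{1}(U_{1})\Phi$; everything else is bookkeeping with the evaluation pairings $V_{i}\times V_{i}^{*}\to\C$ and with the transpose maps ${}^{t}\gamma$, ${}^{t}\widetilde\gamma$, where care is needed with the canonical identifications $(V_{i}^{*})^{*}=V_{i}$.
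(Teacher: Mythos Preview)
Your proof is correct and follows essentially the same route as the paper: reduce the isomorphism question to the three bracket-preservation identities $1')$, $2')$, $3')$, identify $1')$, $2')$ with conditions $1)$, $2)$, and then, assuming these, rewrite $B_{0}^{2}([\gamma(X_{1}),\widetilde\gamma(Y_{1})],A(U_{1}))$ as $B_{0}^{1}([{}^{t}\widetilde\gamma\circ\gamma(X_{1}),Y_{1}],U_{1})$. The only cosmetic difference is that the paper uses the transpose of condition $2)$ alone, namely ${}^{t}\widetilde\gamma\circ\rho_{2}(A(U_{1}))=\rho_{1}(U_{1})\circ{}^{t}\widetilde\gamma$, to pass directly from $Y_{1}({}^{t}\widetilde\gamma\,\rho_{2}(A(U_{1}))\gamma(X_{1}))$ to $Y_{1}(\rho_{1}(U_{1})\,{}^{t}\widetilde\gamma\,\gamma(X_{1}))$, whereas you first apply condition $1)$ and then invoke the commutation $\Phi\rho_{1}(U_{1})=\rho_{1}(U_{1})\Phi$; both arguments yield the same identity and neither requires anything extra.
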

    
    \begin{proof}\hfill
    
   $ \Psi=(\widetilde\gamma,A, \gamma)$ is an isomorphism if and only if the following three conditions hold for all $ U_{1}\in \go{g}_{0}^1,X_{1}\in V_{1},  Y_{1}\in V_{1}^*$: 
   
   \hskip 20pt  $1')$\hskip 5pt $\gamma([U_{1},X_{1}])=[A(U_{1}),\gamma(X_{1})]$,
  
  \hskip 20pt  $2')$\hskip 5pt $\widetilde\gamma([U_{1},Y_{1}])=[A(U_{1}),\widetilde\gamma(Y_{1})]$,
  
   \hskip 20pt  $3')$\hskip 5pt $A([X_{1},Y_{1}])=[\gamma(X_{1}),\widetilde\gamma(Y_{1})]$.
     
     But from the definition of the brackets in $\Gamma(\go{g}_{0}^1, B_{0}^1,\rho_{1})$ and $\Gamma(\go{ g}_{0}^2, B_{0}^1,\rho_{2})$, conditions $1')$ and $2')$    are equivalent to conditions $1)$ and $2)$ respectively.
     
     Again from the definition of the brackets $3')$  is equivalent to 
      $$B_{0}^2(A[X_{1},Y_{1}],U_{2})=B_{0}^2([\gamma(X_{1}), \widetilde \gamma(Y_{1})],U_{2})$$
     for all $X_{1}\in V_{1},Y_{1}\in V_{1}^*, U_{2}\in \go{g}_{0}^2$. Setting  $U_{2}=A(U_{1})$ and using the transpose of condition $2)$    leads to condition $3)$.

    \end{proof}

  Now we give a specific criterion in the case where the representation $\rho_{1}$ is  irreducible.
  
  \begin{prop}\label{prop-CNS-iso-irreductible}\hfill

   Suppose that the representation $(\go{g}_{0}^1,\rho_{1},V_{1})$ is  irreducible.  
   
   Let $\widetilde\gamma :V_{1}^*\longrightarrow V_{2}^*$ and $ \gamma :V_{1}\longrightarrow V_{2}$ be two isomorphisms of vector spaces and let  $A: \go{g}_{0}^1\longrightarrow \go{g}_{0}^2$ be an isomorphism of Lie algebras. Then $$\Psi=(\widetilde\gamma,A, \gamma): \Gamma(\go{g}_{0}^1, B_{0}^1,\rho_{1})\longrightarrow \Gamma(\go{g}_{0}^2, B_{0}^2,\rho_{2})$$
    is an isomorphism of local Lie algebras if and only if the following two conditions hold $($for all $ U_{1}\in \go{g}_{0}^1,X_{1}\in V_{1},  Y_{1}\in V_{1}^*$$ ):$
    
        \hskip 20pt $1)$\hskip 5pt $ \rho_{2}(A(U_{1}))\circ\gamma=\gamma\circ\rho_{1}(U_{1})$,
    
    \hskip 20pt $2)$\hskip 5pt There exists $c\in \C^*$ such that:
    
    \hskip 30pt  a) \hskip 5pt  $^t\widetilde\gamma\circ\gamma= c \text{Id}_{V_{1}}$
    
    \hskip 30pt  b)  \hskip 5pt $B_{0}^2(A([X_{1},Y_{1}]), A(U_{1}))= c B_{0}^1([ X_{1},Y_{1}], U_{1})$.

  \end{prop}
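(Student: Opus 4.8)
The plan is to reduce everything to Proposition \ref{prop-CNS-iso}, which already characterizes the isomorphisms $\Psi=(\widetilde\gamma,A,\gamma)$ of local Lie algebras by the three conditions $1)$, $2)$, $3)$ stated there. Condition $1)$ of the present statement coincides with condition $1)$ of Proposition \ref{prop-CNS-iso}, so it will be enough to show that, once $1)$ is assumed, conditions $2)$ and $3)$ of Proposition \ref{prop-CNS-iso} are together equivalent to conditions $2)a)$ and $2)b)$ above.

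First I would dualize condition $1)$. Transposing the identity $\rho_{2}(A(U_{1}))\circ\gamma=\gamma\circ\rho_{1}(U_{1})$ and using $\rho^{*}(U)=-{}^{t}\rho(U)$ shows that $\gamma^{-1}\colon V_{2}\to V_{1}$ intertwines $\rho_{2}\circ A$ with $\rho_{1}$, hence that ${}^{t}(\gamma^{-1})\colon V_{1}^{*}\to V_{2}^{*}$ is a $\go{g}_{0}^{1}$-module isomorphism from $(V_{1}^{*},\rho_{1}^{*})$ onto $(V_{2}^{*},\rho_{2}^{*}\circ A)$. On the other hand, condition $2)$ of Proposition \ref{prop-CNS-iso} says precisely that $\widetilde\gamma\colon V_{1}^{*}\to V_{2}^{*}$ is also such an intertwiner. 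Since $\rho_{1}$ is irreducible, so is $\rho_{1}^{*}$; thus $(V_{1}^{*},\rho_{1}^{*})$ is an irreducible finite-dimensional $\go{g}_{0}^{1}$-module, and Schur's lemma (over $\C$) tells us that the space of $\go{g}_{0}^{1}$-morphisms $(V_{1}^{*},\rho_{1}^{*})\to(V_{2}^{*},\rho_{2}^{*}\circ A)$ is one-dimensional, spanned by the isomorphism ${}^{t}(\gamma^{-1})$. Therefore condition $2)$ of Proposition \ref{prop-CNS-iso} holds if and only if $\widetilde\gamma=c\,{}^{t}(\gamma^{-1})$ for some $c\in\C$, necessarily nonzero because $\widetilde\gamma$ is invertible; this is exactly condition $2)a)$. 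Conversely, if $\widetilde\gamma=c\,{}^{t}(\gamma^{-1})$ then, being a nonzero scalar multiple of an intertwiner, $\widetilde\gamma$ is itself an intertwiner, so condition $2)$ of Proposition \ref{prop-CNS-iso} is satisfied.

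Next, assuming $2)a)$, I would compute ${}^{t}\widetilde\gamma\circ\gamma$. Using ${}^{t}({}^{t}f)=f$ under the canonical identification of double duals, one gets ${}^{t}\widetilde\gamma=c\,\gamma^{-1}$, hence ${}^{t}\widetilde\gamma\circ\gamma=c\,\mathrm{Id}_{V_{1}}$. Substituting this into condition $3)$ of Proposition \ref{prop-CNS-iso}, and using that $X_{1}\mapsto[X_{1},Y_{1}]$ is linear (immediate from the defining identity $B_{0}([X,Y],U)=Y(U.X)$ of Theorem \ref{th-algebre-locale}), condition $3)$ becomes $B_{0}^{2}(A([X_{1},Y_{1}]),A(U_{1}))=c\,B_{0}^{1}([X_{1},Y_{1}],U_{1})$, which is condition $2)b)$; the converse implication is the same computation read in reverse. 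Combining these equivalences with Proposition \ref{prop-CNS-iso} gives the claim.

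The routine part of the argument is the bookkeeping of transposes and of the identification $V^{**}\cong V$. The one genuinely substantive step, and the only place where the irreducibility hypothesis enters, is the application of Schur's lemma that forces $\widetilde\gamma$ to be proportional to ${}^{t}(\gamma^{-1})$ and thereby produces the scalar $c$; I do not expect any obstacle here beyond keeping careful track of which module each map intertwines. Note in particular that the implication ``$1)$, $2)a)$, $2)b)\Rightarrow\Psi$ is an isomorphism'' holds without any irreducibility assumption, so only the forward direction really uses that $\rho_{1}$ is irreducible.
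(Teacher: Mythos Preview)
Your proof is correct and follows essentially the same approach as the paper: both reduce to Proposition~\ref{prop-CNS-iso} and use Schur's lemma to produce the scalar $c$. The only cosmetic difference is that the paper applies Schur on $V_{1}$ by showing that ${}^{t}\widetilde\gamma\circ\gamma$ is a self-intertwiner of $\rho_{1}$, whereas you apply it on $V_{1}^{*}$ by comparing the two intertwiners $\widetilde\gamma$ and ${}^{t}(\gamma^{-1})$; these are equivalent formulations of the same argument. Your closing observation that only the forward direction actually requires irreducibility is correct and worth keeping.
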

  
  \begin{proof}
  Condition $1)$ appears already  in Proposition \ref{prop-CNS-iso}.
   Composing by $\gamma$ on the right condition $2)$ in this previous Proposition  leads to 
  $$^t\widetilde\gamma\circ \gamma\circ \rho_{1}(U_{1})=\rho_{1}(U_{1})\circ ^t\widetilde\gamma\circ \gamma.$$
    By Schur's lemma we get $^t\widetilde\gamma\circ\gamma= c \text{Id}_{V_{1}}$, with $c\in \C^*$.  Then condition $2) b)$ above is just a rewriting of condition $3) $ of Proposition \ref{prop-CNS-iso}.
  
  Conversely assuming  conditions that $1)$  $2)a)$, and  $2) b)$ above hold it is easy to recover conditions $1)$, $2)$ and $3)$ in the previous Proposition (for example from $2) a)$, condition $2)$ of \ref{prop-CNS-iso} is just the transpose of $1)$).    
  \end{proof}

  \begin{definition}  \label{def-morphism-triplets}\hfill
  
  Let $(\go{g}_{0}^1, B_{0}^1, \rho_{1})$ and $(\go{g}_{0}^2, B_{0}^2, \rho_{2})$ be two fundamental triplets. Let $A\in \Hom(\go{g}_{0}^1,\go{g}_{0}^2)$ is a isomorphism of Lie algebras and let $\gamma\in \Hom(V_{1},V_{2})$ be an isomorphism of vector spaces. We say that the pair $(A,\gamma)$ is an isomorphism of fundamental triplets if 
  
  ${\rm a)}$  
  $$ \forall U,\, \forall V \in \go{g}_{0}^1,\hskip 5pt B_{0}^2(A(U),A(V))=B_{0}^1(U,V)  $$
  
  ${\rm b)}$ 
  $$\forall U\in \go{g}_{0}^1, \hskip 5pt\rho_{2}(A(U))\circ \gamma=\gamma\circ \rho_{1}(U) $$
  

  \end{definition}
  
  \begin{rem} In the definition above, condition ${\rm a)} $ coincides with the notion of isometric isomorphism, or $i$-isomorphism of quadratic Lie algebras introduced in \cite{Duong-et-al}.
  This notion of $i$-isomorphism was already implicit in \cite{Medina-Revoy}.  \end{rem}
  
  \begin{theorem}\label{th-extension-iso}\hfill
  
  
  Any isomorphism  of fundamental triplets 
  $$(A,\gamma): (\go{g}_{0}^1, B_{0}^1, \rho_{1}) \longrightarrow (\go{g}_{0}^2, B_{0}^2, \rho_{2})$$
    extends  to an isomorphism of local Lie algebras 
    $$\Psi_{(A,\gamma)}:  \Gamma({\go g}_{0}^1,B_0^1,\rho_{1})\longrightarrow \Gamma({\go g}_{0}^2,B_0^2,\rho_{2}).$$
    
    Moreover, if   the space $\langle \rho_{1}(\go{g}_{0}^1). V_{1}\rangle $ generated by the vectors $\rho_{1}(U)X$ $(X\in V_{1}, U\in \go{g}_{0}^1)$ is equal to $V_{1}$, then the preceding extension is unique.
    

  \end{theorem}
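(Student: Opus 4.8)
The plan is to take the isomorphism of fundamental triplets $(A,\gamma)$ and build from it the data $(\widetilde\gamma, A, \gamma)$ that Proposition \ref{prop-CNS-iso} requires, then check its three conditions. The only genuinely new ingredient we must supply is $\widetilde\gamma: V_1^*\to V_2^*$, and the natural candidate is $\widetilde\gamma = {}^t(\gamma^{-1})$, i.e. the inverse transpose of $\gamma$. I would first verify that this choice is forced (or at least natural) by condition b) of the definition of a morphism of triplets: differentiating the intertwining relation $\rho_2(A(U))\circ\gamma=\gamma\circ\rho_1(U)$ and taking transposes gives $ {}^t\gamma\circ{}^t\rho_2(A(U)) = {}^t\rho_1(U)\circ{}^t\gamma$, hence $\rho_2^*(A(U))\circ{}^t(\gamma^{-1}) = {}^t(\gamma^{-1})\circ\rho_1^*(U)$, which is exactly condition 2) of Proposition \ref{prop-CNS-iso} for $\widetilde\gamma={}^t(\gamma^{-1})$. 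Condition 1) of that proposition is just hypothesis b) of the triplet isomorphism verbatim.

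The remaining work is condition 3) of Proposition \ref{prop-CNS-iso}, namely $B_0^2(A([X_1,Y_1]),A(U_1)) = B_0^1([{}^t\widetilde\gamma\circ\gamma(X_1),Y_1],U_1)$. Here the key simplification is that with $\widetilde\gamma={}^t(\gamma^{-1})$ we have ${}^t\widetilde\gamma\circ\gamma = \gamma^{-1}\circ\gamma = \mathrm{Id}_{V_1}$, so the right-hand side collapses to $B_0^1([X_1,Y_1],U_1)$. So condition 3) becomes $B_0^2(A([X_1,Y_1]),A(U_1)) = B_0^1([X_1,Y_1],U_1)$, which is precisely the isometry hypothesis a) of the triplet isomorphism, applied to the pair of elements $[X_1,Y_1]\in\go{g}_0^1$ and $U_1\in\go{g}_0^1$. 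Thus all three conditions of Proposition \ref{prop-CNS-iso} hold, and $\Psi_{(A,\gamma)} := ({}^t(\gamma^{-1}), A, \gamma)$ is an isomorphism of local Lie algebras extending $(A,\gamma)$; it extends $(A,\gamma)$ in the evident sense that its restriction to $\go{g}_0^1$ is $A$ and to $V_1$ is $\gamma$.

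For the uniqueness statement, suppose $\Psi'=(\widetilde\gamma', A, \gamma)$ is another extension, i.e. another isomorphism of local Lie algebras with the same $A$ and $\gamma$ but possibly different $V^*$-component $\widetilde\gamma'$. Both $\widetilde\gamma$ and $\widetilde\gamma'$ must satisfy condition 2) of Proposition \ref{prop-CNS-iso}, so $\delta := \widetilde\gamma^{-1}\circ\widetilde\gamma'$ is an intertwining operator for $\rho_1^*$ on $V_1^*$. That alone does not force $\delta$ to be scalar without irreducibility; instead I would use condition 3) to pin $\widetilde\gamma'$ down. Writing condition 3) for $\widetilde\gamma'$ and subtracting the one for $\widetilde\gamma$ gives $B_0^1([({}^t\widetilde\gamma'-{}^t\widetilde\gamma)\circ\gamma(X_1), Y_1], U_1) = 0$ for all $X_1\in V_1, Y_1\in V_1^*, U_1\in\go{g}_0^1$; nondegeneracy of $B_0^1$ then forces $[({}^t\widetilde\gamma'-{}^t\widetilde\gamma)\circ\gamma(X_1), Y_1] = 0$ in $\go{g}_0^1$ for all $X_1, Y_1$. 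By definition b) of the local bracket, $[Z,Y_1]$ determines, via $B_0^1(\cdot,U)$, the linear functional $Y_1(\rho_1(U)Z)$, so $[Z,Y_1]=0$ for all $Y_1$ means $\rho_1(\go{g}_0^1)Z = 0$, i.e. $Z$ is annihilated by the whole of $\rho_1(\go{g}_0^1)$. Setting $Z = ({}^t\widetilde\gamma'-{}^t\widetilde\gamma)\circ\gamma(X_1)$ and letting $X_1$ range over $V_1$, and using $\gamma(V_1)=V_1$ is replaced by noting $\gamma$ is onto, we get that the image of ${}^t\widetilde\gamma'-{}^t\widetilde\gamma$ lies in the subspace of $V_1$ killed by $\rho_1(\go{g}_0^1)$; but the hypothesis $\langle\rho_1(\go{g}_0^1).V_1\rangle = V_1$ is dual to the statement that no nonzero vector of $V_1$ is fixed — more precisely it says $\rho_1^*(\go{g}_0^1)$ has no nonzero invariant... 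Here I expect the main obstacle: one must correctly match up "the transpose kills everything" with the hypothesis, which is a statement about $\rho_1$ on $V_1$ rather than on $V_1^*$. The clean way is: $\langle\rho_1(\go{g}_0^1).V_1\rangle = V_1$ means $V_1$ has no nonzero quotient on which $\go{g}_0^1$ acts trivially, equivalently $V_1^*$ has no nonzero $\go{g}_0^1$-invariant subspace — but what we actually need is that the only $Z\in V_1$ with $\rho_1(\go{g}_0^1)Z=0$ is $Z=0$, i.e. $V_1$ has no nonzero trivial subrepresentation. So I would instead run the argument on the other side: condition 3) can equally be rewritten (using invariance of $B_0^1$ and the identity $(*)$ from the proof of Proposition \ref{prop-CNS-iso}) as a statement forcing $\widetilde\gamma'(Y_1) - \widetilde\gamma(Y_1)$ to pair trivially with all of $\rho_2(A(\go{g}_0^1))\gamma(V_1) = \rho_2(\go{g}_0^2)V_2$, hence to vanish on $\langle\rho_2(\go{g}_0^2).V_2\rangle$; and since $(A,\gamma)$ is an isomorphism of triplets, $\langle\rho_1(\go{g}_0^1).V_1\rangle=V_1$ transports to $\langle\rho_2(\go{g}_0^2).V_2\rangle=V_2$, so $\widetilde\gamma'=\widetilde\gamma$. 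Getting this duality bookkeeping exactly right is the one place care is needed; everything else is a direct unwinding of Proposition \ref{prop-CNS-iso}.
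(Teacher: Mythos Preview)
Your proof is correct and follows the same path as the paper: set $\widetilde\gamma = {}^t(\gamma^{-1})$, verify conditions 1)--3) of Proposition~\ref{prop-CNS-iso}, with condition 3) collapsing to the isometry hypothesis via ${}^t\widetilde\gamma\circ\gamma = \mathrm{Id}_{V_1}$. For uniqueness the paper avoids your detour through $V_1$-annihilated vectors and the transport to $V_2$: it shows directly that for \emph{any} extension $\widetilde\gamma$ the equalities
\[
Y_1(\rho_1(U_1)X_1)=B_0^1([X_1,Y_1],U_1)=B_0^2(A[X_1,Y_1],A(U_1))=Y_1\bigl({}^t\widetilde\gamma\circ\gamma\circ\rho_1(U_1)X_1\bigr)
\]
force ${}^t\widetilde\gamma\circ\gamma$ to be the identity on $\langle\rho_1(\go g_0^1).V_1\rangle=V_1$, hence $\widetilde\gamma={}^t\gamma^{-1}$.
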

  
  \begin{proof}\hfill
  
  Note first that condition $b)$  in Definition \ref{def-morphism-triplets} is exactly condition $1)$ in Proposition \ref{prop-CNS-iso}. Define $\widetilde\gamma: V_{1}^*\longrightarrow V_{2}^*$ by $\widetilde\gamma=^t\kern -5pt\gamma^{-1}$.   Then it is easy to check that 
  $$\forall U\in \go{g}_{0}^1,\hskip 5pt \rho_{2}^*(A(U))\circ \widetilde\gamma=\widetilde\gamma\circ \rho_{1}^*(U)  $$
  and this is exactly condition $2)$ in Proposition \ref{prop-CNS-iso}.
  
  As $^t \widetilde\gamma\circ\gamma=\text{Id}_{|_{V_{1}}}$, condition $a)$ in Definition  \ref{def-morphism-triplets} is exactly condition 3) in Proposition  \ref{prop-CNS-iso}. This Proposition implies then that $\Psi_{(A,\gamma)}= ( \widetilde\gamma,A,\gamma)$ is an isomorphism of the corresponding local Lie algebras.

     It remains to prove the uniqueness of the extension under the condition $\langle \rho_{1}(\go{g}_{0}^1). V_{1}\rangle =V_{1}$ . But if 
   $$\Psi:  \Gamma({\go g}_{0}^1,B_0^1,V_{1})\longrightarrow \Gamma({\go g}_{0}^2,B_0^2,V_{2})$$
   is an isomorphism of local Lie algebras such that $\Psi_{|_{\go{g}_{0}}}=A$,  $\Psi_{|_{V_{1}}}=\gamma$ and  $\Psi_{|_{V_{1}^*}}=\widetilde{\gamma}:V_{1}^*\longrightarrow V_{2}^*$, then for $X_{1}\in V_{1}, Y_{1}\in V_{1}^*$, $A([X_{1},Y_{1}])=[\gamma(X_{1}),\widetilde{\gamma}(Y_{1})]$. Therefore 
   $$B_{0}^2(A([X_{1},Y_{1}]),U_{2})=B_{0}^2([\gamma(X_{1}),\widetilde{\gamma}(Y_{1})],U_{2}), $$ for all $U_{2}\in \go{g}_{0}^2, \,X_{1}\in V_{1}, Y_{1}\in V_{1}^*$.  Then for $U_{2}=A(U_{1})$, $U_{1}\in V_{1}$, 
   
   $\begin{array}{lll}
B_{0}^2(A([X_{1},Y_{1}]),A(U_{1}))&= B_{0}^1([X_{1},Y_{1}], U_{1})&=Y_{1}(\rho_{1}(U_{1})X_{1})\\

=B_{0}^2([\gamma(X_{1}),\widetilde{\gamma}(Y_{1})],A(U_{1}))&= \widetilde{\gamma}(Y_{1})(\rho_{2}(A(U_{1}))\circ \gamma(X_{1}))&= Y(^t\widetilde{\gamma} \circ \rho_{2}(A(U_{1}))\circ\gamma(X_{1}))\\
= Y_{1}(^t\widetilde{\gamma} \circ \gamma\circ\rho_{1}(U_{1})X_{1})&&
   
   \end{array}$
   
 Then from $Y_{1}(\rho_{1}(U_{1})X_{1})=  Y_{1}(^t\widetilde{\gamma} \circ \gamma\circ\rho_{1}(U_{1})X_{1})$, we obtain that $^t\widetilde{\gamma} \circ \gamma $ is the identity on the space $\langle \rho_{1}(\go{g}_{0}^1). V_{1}\rangle=V_{1}$. Therefore $\widetilde{\gamma} = \,^t\kern-1pt\gamma^{-1}$.

  \end{proof}

   The local Lie algebra $\Gamma({\go g}_{0},B_0,\rho)$  may depend strongly  on the fundamental triplet, see Example  \ref{ex-sl2n} below.   
   
 To illustrate this, we will now   investigate the modification  of the bracket in  $\Gamma(\go g_{0}, B_{0}, \rho)$ under a slight change of $\rho$.   
 
 
 


Suppose  that ${\go g}_{0}={ Z}\oplus L$ is a quadratic Lie algebra where $Z$ is a central ideal and $L$ is an ideal.  For $\gamma \in \C^*$ we denote by  $\gamma$${\scriptscriptstyle\Box}$$\rho$  the representation of ${\go g}_{0}$ on $V$ given by $\gamma{\scriptscriptstyle\Box}\rho(z+u)=\gamma\rho(z)+\rho(u)$, for $z\in Z$ and $u\in L$. If  $U=z+u\in {\go g}_{0}$,  we also set $\gamma{\scriptscriptstyle\Box}U=\gamma z+u$. Then $\gamma{\scriptscriptstyle\Box}\rho(U)=\rho( \gamma{\scriptscriptstyle\Box}U)$. If $B_{0}=B_{0, Z}+B_{0,L}$ where $B_{0,Z}$ and $B_{0, L}$ are forms on $Z$ and $L$ respectively, we  define $\gamma {\scriptscriptstyle\Box} B_{0}= \gamma B_{0,Z}+B_{0,L}$.


The next proposition indicates the dependance of the local Lie algebra $\Gamma({\go g}_{0},B_0,\rho)$ if we change $\rho$ into $\gamma{\scriptscriptstyle\Box}\rho$.

\begin{prop}\label{prop-changement-rep}\hfill

{\rm 1)} Let us  denote by $[\,\, , \,\,]_{B_{0},\rho}$ the bracket on $\Gamma({\go g}_{0},B_0,V)$ given by  Theorem \ref{th-algebre-locale}. Then, using the notations defined above, we have:
$$[X,Y]_{B_{0}, \gamma{\scriptscriptstyle\Box}\rho}=\gamma{\scriptscriptstyle\Box}[X,Y]_{B_{0},\rho}$$
$$\,[U,X]_{B_{0},\gamma {\scriptscriptstyle\Box}\rho}=\gamma {\scriptscriptstyle\Box} \rho(U)X,\,\,  [U,Y]_{B_{0},\gamma {\scriptscriptstyle\Box}\rho}=-\gamma {\scriptscriptstyle\Box} \rho^*(U)Y  $$
{\rm 2)} Suppose that $\lambda,\mu,\alpha,\beta \in \C^*$ verify the condition $\frac{\mu^2}{\lambda}=\frac{\beta^2}{\alpha}$. Then the local Lie algebras $\Gamma({\go g}_{0}, \lambda {\scriptscriptstyle\Box}B_{0}, \mu {\scriptscriptstyle\Box} \rho)$ and $\Gamma({\go g}_{0},\alpha {\scriptscriptstyle\Box}B_{0}, \beta {\scriptscriptstyle\Box} \rho)$ are isomorphic. In particular $\Gamma({\go g}_{0}, \lambda {\scriptscriptstyle\Box}B_{0},  \rho)$ and $\Gamma({\go g}_{0},  B_{0}, \frac{1}{\sqrt{\lambda}} {\scriptscriptstyle\Box} \rho)$ are isomorphic.
\end{prop}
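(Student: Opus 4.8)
The plan is to prove part 1) by a direct computation from the defining bracket in Theorem \ref{th-algebre-locale}, and then to deduce part 2) by composing the two elementary modifications already available: the rescaling of the form $B_0$ (Proposition \ref{prop-changement-forme}) and the formula of part 1). First, for part 1), the identities $[U,X]_{B_0,\gamma{\scriptscriptstyle\Box}\rho}=\gamma{\scriptscriptstyle\Box}\rho(U)X$ and $[U,Y]_{B_0,\gamma{\scriptscriptstyle\Box}\rho}=-\gamma{\scriptscriptstyle\Box}\rho^*(U)Y$ are immediate: they are nothing but clause a) of Theorem \ref{th-algebre-locale} applied to the representation $\gamma{\scriptscriptstyle\Box}\rho$ instead of $\rho$ (recall the sign convention $[U,Y]=U.Y=-\rho^*(U)Y$ coming from the contragredient action, as used throughout section 3.1). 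The substantive point is the formula $[X,Y]_{B_0,\gamma{\scriptscriptstyle\Box}\rho}=\gamma{\scriptscriptstyle\Box}[X,Y]_{B_0,\rho}$. To establish it I would start from the characterization in clause b): $[X,Y]_{B_0,\gamma{\scriptscriptstyle\Box}\rho}$ is the unique element $Z\in\go{g}_0$ with $B_0(Z,U)=Y(\gamma{\scriptscriptstyle\Box}\rho(U)X)$ for all $U\in\go{g}_0$. Writing $U=z+u$ with $z\in Z$, $u\in L$, the right-hand side equals $\gamma\,Y(\rho(z)X)+Y(\rho(u)X)$. On the other hand, since $Z$ and $L$ are orthogonal ideals for $B_0=B_{0,Z}+B_{0,L}$, and since $[X,Y]_{B_0,\rho}$ decomposes as $[X,Y]_{B_0,\rho}=W_Z+W_L$ with $W_Z\in Z$, $W_L\in L$, one computes $B_0(\gamma{\scriptscriptstyle\Box}[X,Y]_{B_0,\rho},U)=B_0(\gamma W_Z+W_L,z+u)=\gamma B_{0,Z}(W_Z,z)+B_{0,L}(W_L,u)=\gamma\,Y(\rho(z)X)+Y(\rho(u)X)$, where the last step uses the defining property of $[X,Y]_{B_0,\rho}$ applied to $z$ and to $u$ separately (legitimate because $Z$ and $L$ are ideals, so $\rho(z)X$ pairs only against the $Z$-component and likewise for $u$). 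The two expressions agree for all $U$, and by nondegeneracy of $B_0$ we conclude $[X,Y]_{B_0,\gamma{\scriptscriptstyle\Box}\rho}=\gamma{\scriptscriptstyle\Box}[X,Y]_{B_0,\rho}$.

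For part 2), the strategy is to reduce everything to the single normalization $\Gamma(\go{g}_0,B_0,\frac{1}{\sqrt{\lambda}}{\scriptscriptstyle\Box}\rho)\cong\Gamma(\go{g}_0,\lambda{\scriptscriptstyle\Box}B_0,\rho)$ and then chain these isomorphisms. To prove that particular isomorphism, I would exhibit an explicit graded map $\Psi=(\widetilde\gamma,A,\gamma)$ and check the three conditions of Proposition \ref{prop-CNS-iso} (with $\go{g}_0^1=\go{g}_0^2=\go{g}_0$). Taking $A=\mathrm{Id}_{\go{g}_0}$, and scalar maps $\gamma=c\,\mathrm{Id}_V$, $\widetilde\gamma=c\,\mathrm{Id}_{V^*}$ for a suitable constant $c$ to be pinned down, conditions 1) and 2) of Proposition \ref{prop-CNS-iso} amount to the intertwining $\rho(U)\circ(c\,\mathrm{Id})=(c\,\mathrm{Id})\circ(\tfrac{1}{\sqrt\lambda}{\scriptscriptstyle\Box}\rho)(U)$, which fails for scalar $\gamma$ unless $\rho=\tfrac1{\sqrt\lambda}{\scriptscriptstyle\Box}\rho$. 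So a purely scalar $\gamma$ does not conjugate the two representations; instead one must use the identity map $\gamma=\mathrm{Id}_V$ on the underlying space together with the observation that the {\it local algebra} $\Gamma(\go{g}_0,B_0,\sigma)$ depends on $\sigma$ only through the bracket formulas of part 1). Concretely: part 1) already tells us $[U,X]_{B_0,\frac1{\sqrt\lambda}{\scriptscriptstyle\Box}\rho}=\frac1{\sqrt\lambda}{\scriptscriptstyle\Box}\rho(U)X$ and $[X,Y]_{B_0,\frac1{\sqrt\lambda}{\scriptscriptstyle\Box}\rho}=\frac1{\sqrt\lambda}{\scriptscriptstyle\Box}[X,Y]_{B_0,\rho}$; combining with Proposition \ref{prop-changement-forme}(1), which says $[X,Y]_{B_0,\rho}=\lambda.[X,Y]_{\lambda.B_0,\rho}$ in the graded sense relative to the decomposition $\go{g}_0=Z\oplus L$, one matches brackets on both sides after conjugating $V$ and $V^*$ by the scalars $\mu'=\sqrt[4]{\lambda}$ (or its reciprocal) dictated by the relation $\mu^2/\lambda=\beta^2/\alpha$. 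I would then simply verify, using Proposition \ref{prop-CNS-iso}, that the map sending $X\mapsto\mu' X$, $Y\mapsto\mu' Y$, $U\mapsto U$ intertwines $\Gamma(\go{g}_0,\lambda{\scriptscriptstyle\Box}B_0,\mu{\scriptscriptstyle\Box}\rho)$ with $\Gamma(\go{g}_0,\alpha{\scriptscriptstyle\Box}B_0,\beta{\scriptscriptstyle\Box}\rho)$ precisely when $\mu^2/\lambda=\beta^2/\alpha$: conditions 1) and 2) hold since $A=\mathrm{Id}$ and $\mu{\scriptscriptstyle\Box}\rho$ versus $\beta{\scriptscriptstyle\Box}\rho$ differ by an overall rescaling of the central part absorbed by the grading element, while condition 3) becomes the scalar identity $(\mu/\beta)^2=\lambda/\alpha$ after expanding both sides with part 1) and Proposition \ref{prop-changement-forme}(1).

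The main obstacle, and the point that needs the most care, is the bookkeeping in condition 3) of Proposition \ref{prop-CNS-iso}: one must track how the three separate scalings—of $B_0$ on the central summand $Z$, of $\rho$ on $Z$, and of the conjugating maps $\gamma,\widetilde\gamma$ on $V,V^*$—interact inside the defining identity $B_0^2(A([X,Y]),A(U))=B_0^1([{}^t\widetilde\gamma\circ\gamma(X),Y],U)$, and verify that they cancel exactly when $\mu^2/\lambda=\beta^2/\alpha$. The cleanest route is to not compute with general $\lambda,\mu,\alpha,\beta$ at all, but to prove only the special case $\Gamma(\go{g}_0,\lambda{\scriptscriptstyle\Box}B_0,\rho)\cong\Gamma(\go{g}_0,B_0,\frac1{\sqrt\lambda}{\scriptscriptstyle\Box}\rho)$ carefully (one scaling parameter), and then obtain the general statement by composing: $\Gamma(\go{g}_0,\lambda{\scriptscriptstyle\Box}B_0,\mu{\scriptscriptstyle\Box}\rho)\cong\Gamma(\go{g}_0,B_0,\frac{\mu}{\sqrt\lambda}{\scriptscriptstyle\Box}\rho)$ and likewise for $(\alpha,\beta)$, so the two local algebras are isomorphic iff $\frac{\mu}{\sqrt\lambda}=\frac{\beta}{\sqrt\alpha}$, i.e. $\frac{\mu^2}{\lambda}=\frac{\beta^2}{\alpha}$. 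This reduces the whole proposition to one honest verification plus formal composition, which I expect to go through without surprises.
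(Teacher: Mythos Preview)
Your argument for part 1) is correct and essentially the same as the paper's, just unpacked: where you decompose $U=z+u$ and compute each piece, the paper uses the single identity $\gamma{\scriptscriptstyle\Box}\rho(U)=\rho(\gamma{\scriptscriptstyle\Box}U)$ together with the symmetry $B_0(A,\gamma{\scriptscriptstyle\Box}U)=B_0(\gamma{\scriptscriptstyle\Box}A,U)$ (which holds because $B_0=B_{0,Z}\oplus B_{0,L}$). Both routes amount to the same computation.

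Part 2), however, has a genuine gap. Your explicit candidate map $X\mapsto\mu' X$, $Y\mapsto\mu' Y$, $U\mapsto U$ (i.e.\ $A=\mathrm{Id}_{\go g_0}$ with scalar $\gamma,\widetilde\gamma$) does \emph{not} satisfy condition 1) of Proposition~\ref{prop-CNS-iso}: with $A=\mathrm{Id}$ and scalar $\gamma$, condition 1) reads $(\beta{\scriptscriptstyle\Box}\rho)(U)=(\mu{\scriptscriptstyle\Box}\rho)(U)$ for all $U$, which fails on $Z$ whenever $\mu\ne\beta$. The phrase ``absorbed by the grading element'' does not rescue this: there need be no grading element in the hypotheses, and even if there were, the intertwining condition is a statement about all $U\in\go g_0$, not just about the bracket on $V$. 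Your fallback ``reduce to the special case $\Gamma(\go g_0,\lambda{\scriptscriptstyle\Box}B_0,\rho)\cong\Gamma(\go g_0,B_0,\tfrac{1}{\sqrt\lambda}{\scriptscriptstyle\Box}\rho)$ and compose'' is a sound strategy in principle, but you never carry out the special case, and the same obstruction applies there: with $A=\mathrm{Id}$ no choice of $\gamma$ will intertwine $\rho$ with $\tfrac{1}{\sqrt\lambda}{\scriptscriptstyle\Box}\rho$ (test against $U\in Z$ on which $\rho$ acts nontrivially).

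The missing idea is that the isomorphism must act nontrivially on $\go g_0$, not on $V,V^*$. The paper takes
\[
\varphi(U)=\tfrac{\mu}{\beta}{\scriptscriptstyle\Box}U\quad(U\in\go g_0),\qquad \varphi(X)=X\quad(X\in V),\qquad \varphi(Y)=Y\quad(Y\in V^*).
\]
Then condition 1) becomes $(\beta{\scriptscriptstyle\Box}\rho)(\tfrac{\mu}{\beta}{\scriptscriptstyle\Box}U)=(\mu{\scriptscriptstyle\Box}\rho)(U)$, which is an identity; condition 2) is analogous; and condition 3), after writing out both sides on the $Z$- and $L$-components, reduces precisely to $\alpha(\mu/\beta)^2=\lambda$, i.e.\ $\mu^2/\lambda=\beta^2/\alpha$. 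Once you see that $A$ should be $\tfrac{\mu}{\beta}{\scriptscriptstyle\Box}$, the verification is a two-line computation and no composition argument is needed.
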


\begin{proof}\hfill

1) The element $[X,Y]_{B_{0}, \gamma{\scriptscriptstyle\Box}\rho}$ is by definition the unique element of ${\go g}_{0}$ such that, for all $U\in {\go g}_{0}, $ $B_{0}([X,Y]_{B_{0}, \gamma{\scriptscriptstyle\Box}\rho},U)= Y( \gamma{\scriptscriptstyle\Box}\rho(U)X)= Y(\rho(\gamma {\scriptscriptstyle\Box }U)X)
=B_{0}([X,Y]_{B_{0},\rho}, \gamma{\scriptscriptstyle \Box }U) =B_{0}(\gamma{\scriptscriptstyle \Box}[X,Y]_{B_{0},\rho},U) $. Hence $[X,Y]_{B_{0}, \gamma{\scriptscriptstyle\Box}\rho}=\gamma{\scriptscriptstyle \Box}[X,Y]_{B_{0},\rho}$. The other two identities are just the definitions of $[U,X]_{B,\gamma {\scriptscriptstyle\Box}\rho}$  and $[U,Y]_{B,\gamma {\scriptscriptstyle\Box}\rho}$.

2) The criterion of Proposition \ref{prop-CNS-iso} shows that the map $\varphi: \Gamma({\go g}_{0}, \lambda {\scriptscriptstyle\Box}B_{0}, \mu {\scriptscriptstyle\Box} \rho)\longrightarrow \Gamma({\go g}_{0},\alpha {\scriptscriptstyle\Box}B_{0}, \beta {\scriptscriptstyle\Box} \rho)$  defined by $\varphi(U)=\frac{\mu}{\beta}{\scriptscriptstyle\Box}U \, (U\in {\go g}_{0}),    \varphi(X)=X\, (X\in V),\varphi(Y)=Y   \, (Y\in V^*)$ is an isomorphism of local Lie algebras.

\end{proof}

  \begin{rem}\label{rem-retournement}  {\bf (inverse isomorphism)}\hfill
  
 Let $(\go{g}_{0},B_{0})$ be a quadratic Lie algebra. Consider a finite dimensional representation $(\rho,V)$ of  $\go{g}_{0}$. Set $\go{g}_{0}^1= \go{g}_{0}^2= \go{g}_{0}$,  $\go{g}_{1}^1=V$, $\go{g}_{-1}^1=V^*$, $\go{g}_{1}^2=V^*$, $\go{g}_{-1}^2=V$. Hence $\Gamma(\go{g}_{0}^1, B_{0}, \rho)= V^*\oplus \go{g}_{0}^1\oplus V$ and $\Gamma(\go{g}_{0}^2, B_{0}, \rho^*)= V\oplus \go{g}_{0}^2\oplus V^*$.
  The map 
  $$\theta: \Gamma(\go{g}_{0}, B_{0}, \rho)\longrightarrow \Gamma(\go{g}_{0}, B_{0}, \rho^*)$$
  defined by $\theta(U)=U$ for all $U\in \go{g}_{0}^1= \go{g}_{0}^2= \go{g}_{0}$, $\theta(X)=X$ for all $X\in \go{g}_{1}^1=V$ and $\theta(Y)=Y$ for all $Y\in \go{g}_{0}^2= V^*$, is a {\it non graded } isomorphism  of local Lie algebras (it sends $\go{g}_{1}^1$ onto $\go{g}_{-1}^2$ and  $\go{g}_{-1}^1$ onto $\go{g}_{1}^2$).
  Of course the isomorphism $\theta$ extends uniquely to an  non graded isomorphism, still denoted $\theta$,  between $\go{g}_{max}(\Gamma(\go{g}_{0}, B_{0}, \rho))$ and $\go{g}_{max}(\Gamma(\go{g}_{0}, B_{0}, \rho^*))$.

  \end{rem}

\vskip 5pt
\subsection { Graded Lie algebras with local part $\Gamma({\go g}_{0},B_0,\rho)$} \hfill
 \vskip5pt
 
  Let $(\go{g}_{0},B_{0},\rho)$ be a fundamental triplet. Let $\Gamma(\go{g}_{0},B_{0},\rho)$ be the local Lie algebra constructed in Theorem  \ref{th-algebre-locale}. From Theorem \ref{th-Kac} we can associate two graded Lie algebras  $\go{g}_{max}( \Gamma({\go g}_{0},B_0,\rho))$ and $\go{g}_{min}( \Gamma({\go g}_{0},B_0,\rho))$ to these data.

  \vskip 5pt

  
   \begin{rem} \label{rem-decomposition}
   Let $\Gamma$ and $\Gamma'$ be two local Lie algebras. Then $\Gamma\times \Gamma'$ is naturally a local Lie algebra, and is is easy to see from Theorem \ref{th-Kac} that $\go{g}_{min}(\Gamma\times \Gamma')\simeq \go{g}_{min}(\Gamma)\times \go{g}_{min}(\Gamma')$. The same statement for the maximal  algebras is not true in general. However if $\Gamma'= \go{g}_{0}'$ is concentrated in degree $0$, one has $\go{g}_{max}(\Gamma\times \Gamma')\simeq \go{g}_{max}(\Gamma)\times \Gamma'$.
      
      Let $(\go{g}_{0},B_{0})$ be a quadratic Lie algebra. Suppose that $\go{g}_{0}=\go{g}_{0}^1\oplus \go{g}_{0}^2$ is an orthogonal decomposition into ideals. Define  $B_{0}^1=B_{{0}_{|_{\go{g}_{0}^1\times \go{g}_{0}^1}}}$ and $B_{0}^2=B_{{0}_{|_{\go{g}_{0}^2\times \go{g}_{0}^2}}}$. Suppose also that the representation $(\go{g},\rho,V)$ is a direct sum $(\go{g}_{0}^1\oplus \go{g}_{0}^2, \rho_{1}\oplus \rho_{2}, V_{1}\oplus V_{2}$). From the definitions we obtain that $\Gamma(\go{g}_{0},B_{0},V)=\Gamma(\go{g}_{0}^1,B_{0}^1,V_{1})\times  \Gamma(\go{g}_{0}^2,B_{0}^2,V_{2})$, and then from the previous discussion we get:
   $$\go{g}_{min}( \Gamma({\go g}_{0},B_0,\rho))\simeq \go{g}_{min}( \Gamma({\go g}_{0}^1,B_0^1,\rho_{1}))\times \go{g}_{min}( \Gamma({\go g}_{0}^2,B_0^2,\rho_{2})).$$
 \end{rem}
\vskip20pt
 
 As an example let us show that graded reductive (finite dimensional) Lie algebras are always minimal graded Lie algebras. 
 
 \begin{prop}\label{prop-reductive=minimal}\hfill
 
 Let ${\go g}$ be a graded  Lie algebra.  Let $B_{\Gamma(\go{g})}$ be a nondegenerate invariant symmetric bilinear form on $\Gamma(\go{g})$ such that $B_{\Gamma(\go{g})}(\go{g}_{i},\go{g}_{j})=0$ when $|i+j|\neq 0$. Then, using $B_{\Gamma(\go{g})}$, the contragredient representation $(\go{g}_{0}, \go{g}_{1}^*)$ can be identified with $(\go{g}_{0}, \go{g}_{-1})$.  If $B_{0}$ denotes the restriction of $B_{\Gamma(\go{g})}$ to $\go{g}_{0}$, then we have:
 $$ \go{g}_{min}(\Gamma(\go{g}))\simeq \go{g}_{min}(\Gamma(\go{g}_{0},B_{0}, \go{g}_{1})).$$
 
 Moreover suppose that ${\go g}$ is a reductive $($finite dimensional$)$ Lie algebra. Suppose that we are given a  $\Z$-grading  ${\go g}=\oplus_{i\in \Z} {\go g}_{i}$ such that ${\go g}$ is generated by its local part $\Gamma({\go g})={\go g}_{-1}\oplus {\go g}_{0}\oplus {\go g}_{1}$. Then 
  $$\go{g}\simeq  \go{g}_{min}(\Gamma(\go{g}))\simeq \go{g}_{min}(\Gamma(\go{g}_{0},B_{0}, \go{g}_{1})).$$
 \end{prop}
 
 \begin{proof} \hfill
 
 From the definition we can identify $\go{g}_{1}^*$ with $\go{g}_{-1}$ by using $B_{\Gamma(\go{g})}$. Then for $U\in \go{g}_{0}$, $Y\in \go{g}_{-1}$ and $X\in \go{g}_{1}$ we have, from the invariance of $B_{\Gamma(\go{g})}$:
 $$B_{\Gamma(\go{g})}(U.Y,X)=-B_{\Gamma(\go{g})}(Y,[U,X])=B_{\Gamma(\go{g})}([U,Y],X)$$
 and hence $[U,Y]=U.Y$ (here $U.Y$ stands for the contragredient action of $U\in \go{g}_{0}$ on $\go{g}_{-1}\simeq \go{g}_{1}^*$).
 
 Similarly, we have also:
 $$B_{0}([X,Y],U)=B_{\Gamma(\go{g})}([X,Y],U)=B_{\Gamma(\go{g})}(Y,[U,X])=Y(U.X).$$ 
 
Hence the original bracket in $\Gamma(\go{g})$ is the bracket constructed in Theorem \ref{th-algebre-locale}. Therefore $\go{g}_{min}(\Gamma(\go{g}))\simeq \go{g}_{min}(\Gamma(\go{g}_{0},B_{0}, \go{g}_{1}))$.

 Suppose now that $\go{g}$ is reductive graded. From the universal property of $\go{g}_{min}(\Gamma(\go{g}))$ there exists a graded ideal $I\subset  {\go g}$ such that ${\go g}/I\simeq \go{g}_{min}(\Gamma(\go{g}))$. As ${\go g}$ is reductive there exists an ideal $U\subset {\go g}$ such that ${\go g}=U\oplus I$. Hence $U\simeq\go{g}_{min}(\Gamma(\go{g}))$. But then the local part of $U$ is $\Gamma(\go{g})$, and this contradicts the fact that ${\go g}$ is generated by $\Gamma(\go{g})$ unless $I=\{0\}$.  
 
 \end{proof}
 
 \begin{example}  \label{ex-PH-paraboliques}{\bf (Prehomogeneous vector spaces of parabolic type)}\hfill
 
   It must be noticed that if  $\go{g}=\oplus _{i=-n}^{n}\go{g}_{i}$ is an arbitrary  $\Z$-grading of a semi-simple Lie algebra $\go{g}$  then $\go{g}$ is in general not  generated by its local part, and is therefore not a graded Lie algebra in the sense of section 2.1. Let us explain this briefly. It is well known that there exists always a  grading element, that is an element $H\in \go{g}$ such that $\go{g}_{i}=\{X\in \go{g}\,|\, [H,X]=iX\}$. Let $\go{h}$ be Cartan subalgebra of $\go{g}_{0}$ (which is also a Cartan subalgebra of $\go{g}$), containing $H$. Let $\Psi$ be a set of simple roots of the root system $\Sigma(\go{g},\go{h})$ such that $\alpha(H)\in \N$ (such a set of simple roots always exists). Hence we have associated  a "weighted Dynkin diagram" to a grading  . The subdiagram of roots of weight 0 corresponds to the semi-simple part of the Levi subalgebra $\go{g}_{0}$. But if the weighted Dynkin diagram has weights equal to $1$ and to  $n>1$, then it is easy to see that $\go{g}$ cannot be generated by the local part $\go{g}_{-1}\oplus \go{g}_{0}\oplus \go{g}_{1}$.

We explain  now how to construct gradings $\go{g}=\oplus _{i=-n}^{n}\go{g}_{i}$ such that $\go{g}$ is generated by the local part. Let $\go{h}$ be an arbitrary  Cartan subalgebra of $\go{g}$. As before  $\Sigma(\go{g},\go{h})$ is the set of roots of the pair $(\go{g},\go{h})$ and  $\Psi$ is a set of simple roots. Let $\theta\subset \Psi$ be a subset and let $<\theta>$ denote the subset of roots which are linear combinations of elements of $\theta$. Let $\go{g}_{0}=\go{l}_{\theta}$ be the Levi subalgebra corresponding to $\theta$. That is $\go{g}_{0}=\go{h}\oplus (\oplus_{\alpha\in<\theta>}\go{g}^{\alpha})$ where $\go{g}^{\alpha}$ is the root space corresponding to $\alpha$. Let $H$ be the unique element in $\go{h}$ such that $\alpha(H)=0$ if $\alpha\in \theta$ and $\alpha(H)=1$ if $\alpha \in \Psi\setminus \theta$. Define then $\go{g}_{i}=  \{X\in \go{g}\,|\, [H, X]=iX\} $ (this definition of $\go{g}_{0}$ is coherent with the preceding one). One obtains this way a grading $\go{g}=\oplus _{i=-n}^n\go{g}_{i}$. The representations $(\go{g}_{0}, \go{g}_{1})$ are prehomogeneous vector spaces called prehomogeneous spaces of parabolic type. It is easy to see that they correspond to gradings whose weights in the sense above are only $0$ and $1$. From Proposition \ref{prop-reductive=minimal} we obtain that in this case $\go{g}=\go{g}_{min}(\Gamma(\go{g}_{0},B_{0}, \go{g}_{1}))$ where $B_{0}$ is the restriction of the Kiling form of $\go{g}$ to $\go{g}_{0}$.  
 
 \end{example}

  \begin{example}\label{ex-Kac-Moody}{\bf (Principal gradings of symmetrizable  Lie algebras)}\hfill
  
  We adopt here the same definitions and notations as in \cite{Kac3}. Let $A=(a_{i,j})$ be a $n\times n$ matrix with complex coefficients and of rank $\ell$. Let $(\go{h}, \Pi,\check{\Pi})$ be a realization of $A$.  This means that $\go{h}$ is a complex vector space,  $\Pi=\{\alpha_{1},\dots,\alpha_{n}\}\subset \go{h}^*$ and $\check{\Pi}=\{{\check{\alpha}}_{1},\dots,{ \check{\alpha}}_{n}\}\subset \go{h}$ are subsets of $\go{h}^*$ and $\go{h}$ respectively, satisfying the following conditions:  $\Pi$ and $\check{\Pi}$ are linearly independent, $\langle \check{\alpha}_{i}, \alpha_{j}\rangle= a_{i,j}$ and $n-\ell=\dim \go{h} -n$.  The Lie algebra $\widetilde{\go{g}}(A)$ is the Lie algebra with generators $\go{h}$, $e_{i}$, $f_{i}$ ($i=1,\dots,n$) and  the relations:
  $$[e_{i},f_{j}]=\delta_{i,j} \check{\alpha}_{i},  \,\, [h,e_{i}]=\alpha_{i}(h)e_{i}, [h,f_{i}]=-\alpha_{i}(h)f_{i},\,\, \,[h,h']=0, \,\,(h,h'\in \go{h}).$$
  Then the Lie algebra  $\go{g}(A)$ is defined by $ \widetilde{\go{g}}(A)/\go{r}$ where $\go{r}$ is the maximal ideal intersecting $\go{h}$ trivially. And then $\go{g}(A)= \bigoplus_{i\in \Z}\go{g}_{i}$ where $\go{g}_{i}=\bigoplus_{ht(\alpha)=i}\go{g}_{\alpha}$, and where $\go{g}_{\alpha}$ is the root space of $\alpha$ (if $\alpha=\sum_{i=1}^n m_{i}\alpha_{i}, \text{ht}(\alpha)=\sum_{i}m_{i}$). This is the principal grading of $\go{g}(A)$.  We have $\go{g}_{-1}=\oplus_{i=1}^n\C f_{i} $, $\go{g}_{0}=\go{h}$, $\go{g}_{1}=\oplus_{i=1}^n\C e_{i} $, and it is easy to see that $\go{g}(A)=\go{g}_{min}(\Gamma(A))$ where $\Gamma(A)= \go{g}_{-1}\oplus \go{g}_{0}\oplus \go{g}_{1}$.
  
    We suppose further that the matrix $A$ is symmetrizable. This means that there exist a symmetric matrix $B=(b_{i,j}) $ and an invertible  diagonal matrix $D=\text{diag}(\epsilon _{1},\dots,\epsilon_{n})$ such that $A=DB$. The Lie algebra is then called a {\it symmetrizable Lie Algebra}. Then, according to Theorem 2.2. of \cite{Kac3}, there exists a non-degenerate invariant symmetric bilinear form $(.,.)$ on $\go{g}(A)$ such that 
    the restriction of this form to $\go{h}$ is non-degenerate and 
    $(\go{g}_{i},\go{g}_{j})=0$ if $i+j\neq0$. Therefore $\go{g}_{-i}$ can be identified to $\go{g}_{i}^*$ and the representation $(\go{g}_{0},\go{g}_{-1}) $ is the contragredient representation of $(\go{g}_{0},\go{g}_{1}) $. 
    Let $\rho$ denote the representation  $(\go{g}_{0},\go{g}_{1})$ and let $B_{0}$ be the restriction of the form $(.,.)$ to $\go{g}_{0}=\go{h}$. Then, in the notations of section $3.3$ we have 
    $$\go{g}(A)= \go{g}_{min}(\Gamma(\go{g}_{0}, B_{0},\rho)).$$


  
 

   \end{example}

 \begin{example}\label{ex-sl2n}

 We will now examine the case of ${\go {sl}}_{2n}(\C)$ which will be considered both as a graded Lie algebra and a local Lie algebra. This will show that   the local Lie algebras  $\Gamma({\go g}_{0},B_0,\rho)$ and the corresponding minimal Lie algebra $\go{g}_{min}(\Gamma( {\go g}_{0},B_{0},\rho))$ depend strongly on the choice of $B_{0}$.
 
 Consider first the classical grading of ${\go {sl}}_{2n}(\C)$ (of length $3$) defined by:
 
 $$\begin{array}{rcl}
 {\go g}_{-1}=V^*&=& \{\begin{pmatrix}0&0\\
 Y&0
\end{pmatrix}, Y\in M_{n}(\C)\}\\

{\go g}_{0}&=& \{\begin{pmatrix}A&0\\
 0&B
\end{pmatrix}, A,B \in M_{n}(\C),\, Tr(A+B)=0\}\\
  
 {\go g}_{1}=V&=& \{\begin{pmatrix}0&X\\
 0&0
\end{pmatrix}, X\in M_{n}(\C)\}\\
 \end{array}$$
 
 
 As an invariant form on ${\go {sl}}_{2n}(\C)$ we will take $B(\alpha,\beta)= Tr(\alpha\beta), (\alpha,\beta\in {\go {sl}}_{2n}(\C)$). This is just a multiple of the Killing form. Let us call $B_{0}$ the restriction of $B$ to ${\go g}_{0}$. The form $B_{0}$ is of course nondegenerate.  The representation $({\go g}_{0}, \rho, V)$ is defined by the bracket. Therefore   we have $$\rho(\begin{pmatrix}A&0\\
 0&B
\end{pmatrix})(\begin{pmatrix}0&X\\
 0&0
\end{pmatrix}) =\begin{pmatrix}0&AX-XB \\
 0&0
\end{pmatrix}
.$$
\vskip 5pt

If we consider ${\go {sl}}_{2n}(\C)$ as the local Lie algebra $\Gamma(\go g_{0}, B_{0}, \rho)$ we now from Proposition \ref{prop-reductive=minimal} that $\go{g}_{min}( \Gamma({\go g}_{0},B_{0},\rho))={\go {sl}}_{2n}(\C)$.

Next we will modify the form $B_{0}$ in the following manner. Let $\lambda=(\lambda_{1},\lambda_{2})\in (\C^*)^2$ and set $B_{0}^\lambda(\begin{pmatrix}A&0\\
 0&B
\end{pmatrix},\begin{pmatrix}A'&0\\
 0&B'
\end{pmatrix})=\lambda_{1}Tr(AA')+\lambda_{2}Tr(BB')$.  

An easy computations shows that $B_{0}^\lambda$ is non degenerate if and only if $\lambda_{1}\neq0$, $\lambda_{2}\neq 0$ and $\lambda_{1}+\lambda_{2}\neq0$.

It can also be shown (see Lemma \ref{lemma-forme-locale}) that $B_{0}^\lambda$ extends to an invariant non degenerate symmetric bilinear form on $\Gamma({\go g}_{0},B_0^\lambda,\rho)$. If $\lambda_{1}\neq\lambda_{2}$, $B_{0}^\lambda$ and $B_{0} $ are not proportional. Therefore the algebra $\go{g}_{min}( \Gamma({\go g}_{0},B_{0}^\lambda,\rho))$ is not isomorphic to ${\go {sl}}_{2n}(\C)=  \Gamma({\go g}_{0},B_{0},\rho)$ (uniqueness of the Killing form up to constants). In fact, using  Proposition \ref{prop-finie-ss} below, one can prove that actually $\go{g}_{min}( \Gamma({\go g}_{0},B_{0}^\lambda,\rho))$ is infinite dimensional if $\lambda_{1}\neq\lambda_{2}$

 \end{example}

\vskip 5pt
\subsection { Transitivity} \hfill
 \vskip5pt

  Let us  recall the notion of transitivity introduced by V. Kac.
 
 \begin{definition}\label{def-transitivity} \ {\rm (Kac \cite{Kac1}, Definition 2)}
 
 Let ${\go g}$ $($resp. $\Gamma$$)$ be a graded Lie algebra $($resp. a local Lie algebra$)$. Then ${\go g}$ $($resp. $\Gamma$$)$ is said to be transitive if 
 
 - for $x\in {\go g}_{i}$, $i\geq 0$, $[x,{\go g}_{-1}]=\{0\}\Rightarrow x=0$
 
  - for $x\in {\go g}_{i}$, $i\leq 0$, $[x,{\go g}_{1}]=\{0\}\Rightarrow x=0$.
 \end{definition}
 
 In particular if ${\go g}$ (or $\Gamma$) is transitive, then the modules $({\go g}_{0},{\go g}_{-1})$ and $({\go g}_{0},{\go g}_{1})$ are faithful.
 
 \begin{rem}\label{rem-centre} \hfill
 
 It is easy to see that if  a graded Lie algebra $\go{g}$ is transitive then its center $Z(\go{g})$ is trivial.
 \end{rem}
 
 If $A$ is a subset of a vector space $V$, we denote by $\langle A \rangle $ the subspace of $V$ generated by $A$.
 
 \begin{prop}\label{prop-transitive} \hfill
 
   Let $({\go g}_{0},B_{0},(\rho,V))$ be a fundamental triplet. 
   
   {\rm 1)} The local Lie algebra $\Gamma({\go g}_{0},B_0,\rho)$ $($or the minimal algebra $\go{g}_{min}(\Gamma(\go{g}_{0},B_{0},\rho))$$)$ is transitive if and only if $(\rho,V)$ is faithful and $\langle {\go g}_{0}.V \rangle =V$ and $\langle {\go g}_{0}.V^* \rangle =V^*$.

{\rm 2)} If the representation $(\rho,V)$ is completely reducible, then the local Lie algebra $\Gamma({\go g}_{0},B_0,\rho)$ $($or the minimal algebra $\go{g}_{min}(\Gamma(\go{g}_{0},B_{0},\rho))$$)$ is transitive if and only if $(\rho,V)$ is faithful and $V$ does not contain the trivial module.

 \end{prop}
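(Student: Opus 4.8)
The plan is to reduce everything to a single computation on the local part, via the fact that for an arbitrary local Lie algebra $\Gamma$ the algebra $\go{g}_{min}(\Gamma)$ is transitive if and only if $\Gamma$ is. One direction is immediate: the transitivity conditions for $\go{g}_{min}(\Gamma)$ in degrees $-1,0,1$ are precisely those for its local part. For the converse, suppose $x\in(\go{g}_{min}(\Gamma))_{i}$ with $i\geq 2$ and $[x,(\go{g}_{min}(\Gamma))_{-1}]=\{0\}$. Since $\go{g}_{min}(\Gamma)$ is generated by $\Gamma$ one has $(\go{g}_{min}(\Gamma))_{-k}=[(\go{g}_{min}(\Gamma))_{-1},(\go{g}_{min}(\Gamma))_{-k+1}]$, so an easy induction with the Jacobi identity gives $[x,(\go{g}_{min}(\Gamma))_{-k}]=\{0\}$ for every $k\geq 1$; a second induction (again only Jacobi) then shows that the submodule generated by $x$ for the action of the subalgebra $\oplus_{j\geq 0}(\go{g}_{min}(\Gamma))_{j}$ is in fact a graded ideal of $\go{g}_{min}(\Gamma)$, obviously contained in $\oplus_{j\geq 2}(\go{g}_{min}(\Gamma))_{j}$ and hence meeting $\Gamma$ trivially. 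By Theorem \ref{th-Kac}, $\go{g}_{min}(\Gamma)=\go{g}_{max}(\Gamma)/J_{max}$ admits no nonzero graded ideal meeting $\Gamma$ trivially, so $x=0$; the case $i\leq -2$ is symmetric, and the degrees $-1,0,1$ are exactly the transitivity of $\Gamma$. (This equivalence could instead be cited from \cite{Kac1}.)

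Next I would examine the four transitivity conditions for $\Gamma({\go g}_{0},B_{0},\rho)=V^{*}\oplus{\go g}_{0}\oplus V$. For $x\in{\go g}_{0}$, the conditions $[x,V]=\{0\}\Rightarrow x=0$ and $[x,V^{*}]=\{0\}\Rightarrow x=0$ say exactly that $\ker\rho=\{0\}$ and $\ker\rho^{*}=\{0\}$, and since $\ker\rho^{*}=\ker\rho$ both are equivalent to $(\rho,V)$ being faithful. For $X\in V={\go g}_{1}$, part b) of Theorem \ref{th-algebre-locale} together with the nondegeneracy of $B_{0}$ shows that $[X,V^{*}]=\{0\}$ if and only if $Y(\rho(U)X)=0$ for all $U\in{\go g}_{0}$ and $Y\in V^{*}$, i.e. if and only if $\rho({\go g}_{0})X=\{0\}$; using the identity $Y(\rho(U)X)=-(\rho^{*}(U)Y)(X)$, the set of such $X$ is exactly $\langle{\go g}_{0}.V^{*}\rangle^{\perp}$, so the condition ``$[X,V^{*}]=\{0\}\Rightarrow X=0$'' is equivalent to $\langle{\go g}_{0}.V^{*}\rangle=V^{*}$. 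The symmetric computation for $Y\in V^{*}={\go g}_{-1}$ gives that ``$[Y,V]=\{0\}\Rightarrow Y=0$'' is equivalent to $\langle{\go g}_{0}.V\rangle=V$. Collecting these four equivalences proves assertion 1) both for $\Gamma({\go g}_{0},B_{0},\rho)$ and, by the reduction above, for $\go{g}_{min}(\Gamma({\go g}_{0},B_{0},\rho))$.

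For assertion 2), assume $(\rho,V)$ is completely reducible; then so is $(\rho^{*},V^{*})$. Decomposing $V$ into irreducible submodules, $\langle{\go g}_{0}.V\rangle$ is the sum of the nontrivial constituents, since $\langle{\go g}_{0}.W\rangle$ is a nonzero, hence full, submodule of any nontrivial irreducible $W$, while ${\go g}_{0}$ annihilates a trivial constituent; thus $\langle{\go g}_{0}.V\rangle=V$ if and only if $V$ has no trivial submodule. Likewise $\langle{\go g}_{0}.V^{*}\rangle=V^{*}$ if and only if $V^{*}$ has no trivial submodule, and by complete reducibility a trivial submodule of $V^{*}$ amounts to a trivial quotient of $V$, hence to a trivial submodule of $V$. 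So, under complete reducibility, the two subspace conditions in 1) both reduce to ``$V$ does not contain the trivial module'', and 2) follows from 1). The only genuinely delicate point in the whole argument is the reduction in the first paragraph, namely that a degree $\geq 2$ element killed by $(\go{g}_{min}(\Gamma))_{-1}$ generates a graded ideal disjoint from the local part and therefore vanishes in the minimal algebra; the remaining steps are routine linear algebra with the nondegenerate form $B_{0}$, the duality between $V$ and $V^{*}$, Schur's lemma and complete reducibility.
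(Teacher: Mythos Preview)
Your proof is correct and follows essentially the same route as the paper: reduce to the local part, then use the definition of the bracket $[X,Y]$ via $B_{0}$ together with the duality $V\times V^*\to\C$ to identify each transitivity condition with faithfulness or with one of the span conditions, and finally invoke irreducibility for part 2). The only difference is that the paper simply cites Kac's result (\cite{Kac1}, Prop.~5) for the equivalence ``$\Gamma$ transitive $\Leftrightarrow$ $\go{g}_{min}(\Gamma)$ transitive'', whereas you supply a self-contained argument via the graded-ideal trick; your first induction ($[x,\go{g}_{-k}]=0$) is in fact not needed once you run the second one, but this is harmless.
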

 \begin{proof}\hfill

 As a minimal graded Lie algebra with a transitive local part is transitive (\cite{Kac1}, Prop. 5 page 1278), it is enough to prove the proposition for the local part.
 
 1)  Suppose that $\Gamma({\go g}_{0},B_0,\rho)$ is transitive. We have already remarked that then the representation $(\rho,V)$ is faithful (and hence  $(\rho^* ,V^*)$ is  faithful too).  If $\langle {\go g}_{0}.V \rangle \neq V$, then there exists $Y\in V^*, Y\neq0$ such that $Y({\go g}_{0}.V)= 0$. From the definition of the bracket   we obtain that $B_{0}([V,Y],{\go g}_{0})=0$. Hence $[Y,V]=\{0\}$. This contradicts the transitivity. Similarly one proves that transitivity implies $\langle {\go g}_{0}.V^* \rangle =V^*$.  Conversely  suppose that $(\rho,V)$ is faithful and $\langle {\go g}_{0}.V \rangle =V$ and $\langle {\go g}_{0}.V^* \rangle =V^*$. The first of these assumptions is one of the conditions needed for the transitivity. Suppose also that $[X,V^*]=\{0\}$ for an $X\in V$. Then $B_{0}([X,Y],U)=0$ for all $U\in {\go g}_{0}$ and all $Y\in V^*$. Hence $Y(U.X)=-U.Y(X)=0$. Therefore, as $\langle {\go g}_{0}.V^* \rangle =V^*$, we have  $V^*(X)=0$ and hence $X=0$. The same proof, using the identity  $\langle {\go g}_{0}.V \rangle =V$, shows that  $[Y,V]=\{0\}$ implies $Y=0$. Hence $\Gamma({\go g}_{0},B_0,\rho)$ is transitive.

 2) Let $V=\oplus_{i=1}^kV_{i}$  be  a decomposition of $V$ into irreducibles. If $V_{i}$  is  not  the trivial module we have of course $\langle \go{g}_{0}.V_{i}\rangle =V_{i}$. It is then easy to see that the preceding condition  "$\langle {\go g}_{0}.V \rangle =V$ and $\langle {\go g}_{0}.V^* \rangle =V^*$"  is equivalent to the condition "$V$ does not contain the trivial module".  

 \end{proof}
 
 
 
 
 \begin{rem}\label{rem-grading-elem-center}\hfill
 
 1) Suppose that there exists an element $H\in {\go g}_{0}$ such that $H.X=X$ for all $X\in V$. Such an element is called {\it grading element}. Then obviously the conditions  $\langle {\go g}_{0}.V \rangle =V$ and $\langle {\go g}_{0}.V^* \rangle =V^*$ are satisfied. In this case the local Lie algebra $\Gamma({\go g}_{0},B_0,V)$ is transitive if an only if $(\rho,V)$ is faithful.

 
 2) Suppose that the representation $(\rho,V)$ is faithful and completely reducible.  Let $V=\oplus_{i=1}^kV_{i}$  be  a decomposition of $V$ into irreducibles. Then by Schur's Lemma we obtain that $\dim Z(\go{g}_{0})\leq k$ ($Z(\go{g}_{0})$ denotes the center of $\go{g}_{0}$). Hence if the local Lie algebra $\Gamma({\go g}_{0},B_0,\rho)$ is transitive, then $\dim Z(\go{g}_{0})\leq k$. In particular if $V$ is irreducible, then  $\dim Z(\go{g}_{0})\leq 1$.

 \end{rem}
 The next proposition describes the structure of the algebra $\go{g}_{min}(\Gamma({\go g}_{0},B_0,\rho))$ in the non transitive case, under some assumptions.

 \begin{prop}\label{prop-cas-non-transitif} \hfill
 
 Let $\go{g}_{0}$ be a reductive Lie algebra, and let $B_{0}$ be a non degenerate invariant symmetric bilinear  on $\go{g}_{0}$. Let $(\rho,V)$ be a finite dimensional completely reducible representation. Denote by $\go{g}_{0}^k$ the kernel of the representation. We suppose also that the restriction of $B_{0}$ to $Z(\go{g}_{0}^k)=Z(\go{g}_{0})\cap \go{g}_{0}^k$ is non-degenerate. Then if we denote  by $\go{g}_{0}^f$ the ideal of $\go{g}_{0}$ orthogonal to $\go{g}_{0}^k$ we have $\go{g}_{0}=\go{g}_{0}^k\oplus \go{g}_{0}^f$.  Denote also by $V_{0}$ the isotypic component  of the trivial module in $V$ and by $V_{1}$ the $\go{g}_{0}$-invariant complementary subspace   to $V_{0}$ $(V=V_{1}\oplus V_{0}).$
 
 Then:
 
 $1)$  $a)$  $$\go{g}_{max}(\Gamma({\go g}_{0},B_0,V))\simeq \go{g}_{max}(\Gamma(\go{g}_{0}^f,{B_{0}}_{|_{\go{g}_{0}^f}},V))\times  \go{g}_{0}^k \hskip 8pt \text{ and } $$
 $b)$ 
 $$\go{g}_{min}(\Gamma({\go g}_{0},B_0,V))\simeq \go{g}_{min}(\Gamma(\go{g}_{0}^f,{B_{0}}_{|_{\go{g}_{0}^f}},V))\times  \go{g}_{0}^k.$$
 
$2)$ Moreover
$$\go{g}_{min}(\Gamma({\go g}_{0},B_0,V))\simeq \go{g}_{min}(\Gamma(\go{g}_{0}^f,{B_{0}}_{|_{\go{g}_{0}^f}},V_{1}))\times  (V_{0}\oplus V_{0}^*)\times \go{g}_{0}^k$$
$($$\go{g}_{min}(\Gamma(\go{g}_{0}^f,{B_{0}}_{|_{\go{g}_{0}^f}},V_{1}))$ is transitive and   $[V_{0},V_{0}]=[V_{0}^*,V_{0}^*]=[V_{0},V_{0}^*]=\{0\}$$)$.

$3)$ $$Z(\go{g}_{min}(\Gamma({\go g}_{0},B_0,V)))=(V_{0}\oplus V_{0}^*)\times Z(\go{g}_{0}^k)$$
 \end{prop}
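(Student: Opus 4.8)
The idea is to reduce everything to the already‑established structure theory: the decomposition of a direct sum of fundamental triplets (Remark~\ref{rem-decomposition}), the behaviour of the construction on the kernel of the representation, and the transitivity criterion (Proposition~\ref{prop-transitive}). First I would observe that the hypothesis that $B_{0}$ restricted to $Z(\go{g}_{0}^k)$ is non‑degenerate, together with reductivity of $\go{g}_{0}$, really does give an orthogonal decomposition $\go{g}_{0}=\go{g}_{0}^k\oplus \go{g}_{0}^f$ into ideals: the semisimple part of $\go{g}_{0}^k$ already carries a non‑degenerate restriction of $B_{0}$ (being an ideal of a reductive quadratic Lie algebra it is a sum of simple ideals and $B_{0}$ is non‑degenerate on each simple ideal, since the radical of $B_{0}$ is an ideal), so $B_{0}|_{\go{g}_{0}^k}$ is non‑degenerate, and then $\go{g}_{0}^f:=(\go{g}_{0}^k)^{\perp}$ is an ideal with $\go{g}_{0}=\go{g}_{0}^k\oplus \go{g}_{0}^f$, $B_{0}=B_{0}|_{\go{g}_{0}^k}\oplus B_{0}|_{\go{g}_{0}^f}$, and the representation $\rho$ is trivial on $\go{g}_{0}^k$, i.e. $(\rho,V)$ is a representation of $\go{g}_{0}^f$ (inflated trivially).

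\textbf{Proof of 1).} With this decomposition, $(\go{g}_{0},B_{0},(\rho,V))$ is, as a fundamental triplet, the direct sum of $(\go{g}_{0}^k,B_{0}|_{\go{g}_{0}^k},(\text{trivial},0))$ — here I take $V$ to sit entirely on the $\go{g}_{0}^f$ side — and $(\go{g}_{0}^f,B_{0}|_{\go{g}_{0}^f},(\rho,V))$. Wait: more precisely, $(\rho,V)$ is a representation of $\go{g}_{0}=\go{g}_{0}^k\oplus\go{g}_{0}^f$ which is trivial on $\go{g}_{0}^k$; from the construction of the bracket in Theorem~\ref{th-algebre-locale}, since $\go{g}_{0}^k$ acts by zero on $V$ and on $V^*$ and $B_{0}(\go{g}_{0}^k,\go{g}_{0}^f)=0$, one checks directly that $[X,Y]$ lies in $\go{g}_{0}^f$ for $X\in V$, $Y\in V^*$, and that $\go{g}_{0}^k$ brackets trivially with $V$ and $V^*$. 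Hence the local Lie algebra splits:
$$\Gamma(\go{g}_{0},B_{0},V)=\Gamma(\go{g}_{0}^f,B_{0}|_{\go{g}_{0}^f},V)\ \oplus\ \go{g}_{0}^k,$$
where $\go{g}_{0}^k$ is an abelian‑bracket direct summand sitting entirely in degree $0$ (so it contributes only to $\go{g}_{0}$ and brackets trivially with everything in the complementary local algebra). Applying Remark~\ref{rem-decomposition} (in the slightly more general form where one summand has $V=0$), or equivalently the universal properties in Theorem~\ref{th-algebre-globale} directly, gives $1)a)$ and $1)b)$: the free Lie algebra on $0$ is $0$, so $\go{g}_{max}$ and $\go{g}_{min}$ of the $\go{g}_{0}^k$–summand are just $\go{g}_{0}^k$ itself, and $\go{g}_{\ast}$ of a direct sum of local Lie algebras (one of which is central and concentrated in degree $0$) is the direct sum.

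\textbf{Proof of 2) and 3).} For $2)$, inside the triplet $(\go{g}_{0}^f,B_{0}|_{\go{g}_{0}^f},(\rho,V))$ the representation is completely reducible and faithful; write $V=V_{1}\oplus V_{0}$ with $V_{0}$ the trivial isotypic component. Since $\go{g}_{0}^f$ acts by zero on $V_{0}$ as well, and $\go{g}_{0}^f$ is by construction the part of $\go{g}_{0}$ acting faithfully, the same splitting argument as above — now applied inside $\go{g}_{0}^f$, using that $\langle\go{g}_{0}^f.V_{0}\rangle=0$ forces $[V_{0},V_{0}^*]$ to be $B_{0}$‑orthogonal to all of $\go{g}_{0}^f$ (by the defining formula $B_{0}([X,Y],U)=Y(U.X)$), hence $[V_{0},V_{0}^*]=0$ because $B_{0}|_{\go{g}_{0}^f}$ is non‑degenerate — shows that the local Lie algebra $\Gamma(\go{g}_{0}^f,B_{0}|_{\go{g}_{0}^f},V)$ is the direct sum of $\Gamma(\go{g}_{0}^f,B_{0}|_{\go{g}_{0}^f},V_{1})$ and the abelian local algebra $V_{0}\oplus\{0\}\oplus V_{0}^*$ (with all brackets among $V_{0},V_{0}^*$ zero). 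Passing to $\go{g}_{min}$ via Remark~\ref{rem-decomposition} again, and combining with $1)b)$, yields the displayed isomorphism, with all the asserted vanishing brackets; transitivity of $\go{g}_{min}(\Gamma(\go{g}_{0}^f,B_{0}|_{\go{g}_{0}^f},V_{1}))$ is exactly Proposition~\ref{prop-transitive}, since on $V_{1}$ the representation of $\go{g}_{0}^f$ is faithful (that is how $\go{g}_{0}^f$ was defined, once we note the kernel on $V_{1}$ equals the kernel on $V$ restricted to $\go{g}_{0}^f$, which is trivial) and contains no trivial submodule by construction. Finally $3)$ follows by computing the centre of a direct sum: $Z(\go{g}_{0}^k)=\go{g}_{0}^k$ here since $\go{g}_{0}^k$ acts trivially, wait — $\go{g}_{0}^k$ need not be abelian, so its contribution to the centre of the total algebra is $Z(\go{g}_{0}^k)$; the transitive summand $\go{g}_{min}(\Gamma(\go{g}_{0}^f,B_{0}|_{\go{g}_{0}^f},V_{1}))$ has trivial centre by Remark~\ref{rem-centre}; and the abelian summand $V_{0}\oplus V_{0}^*$ is entirely central (it brackets trivially with everything, including with the transitive summand and with $\go{g}_{0}^k$, by the vanishing brackets established in $2)$). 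Adding these gives $Z(\go{g}_{min}(\Gamma(\go{g}_{0},B_{0},V)))=V_{0}\oplus V_{0}^*\oplus Z(\go{g}_{0}^k)$.

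\textbf{Main obstacle.} The only genuinely delicate point is checking that the three claimed splittings are splittings \emph{as local Lie algebras} — i.e. that the mixed brackets $[V_{0},V_{0}^*]$, $[\go{g}_{0}^k,V]$, $[\go{g}_{0}^k,V^*]$ really vanish. For the $\go{g}_{0}$–valued brackets this is where the non‑degeneracy hypotheses on $B_{0}$ are used: the bracket $[X,Y]$ is only characterised \emph{via} $B_{0}$, so to conclude it is zero (or lies in a given ideal) one must know $B_{0}$ is non‑degenerate on the relevant piece, which is precisely why the hypothesis "$B_{0}|_{Z(\go{g}_{0}^k)}$ non‑degenerate" is imposed. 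Once those vanishings are in hand, everything else is a formal consequence of Remark~\ref{rem-decomposition}, Proposition~\ref{prop-transitive}, and Remark~\ref{rem-centre}.
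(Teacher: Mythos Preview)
Your proof is correct and follows essentially the same route as the paper's: you verify the local splitting $\Gamma(\go{g}_{0},B_{0},V)\simeq \Gamma(\go{g}_{0}^f,B_{0}|_{\go{g}_{0}^f},V)\oplus \go{g}_{0}^k$ via the computation $B_{0}([X,Y],U)=Y(U.X)=0$ for $U\in\go{g}_{0}^k$, then split off $V_{0}$ by the same computation with $X\in V_{0}$, and finish with Remark~\ref{rem-decomposition}, Proposition~\ref{prop-transitive}, and Remark~\ref{rem-centre}, exactly as the paper does. Your added justification for the orthogonal decomposition $\go{g}_{0}=\go{g}_{0}^k\oplus\go{g}_{0}^f$ (non-degeneracy on the semisimple part of $\go{g}_{0}^k$ plus the hypothesis on $Z(\go{g}_{0}^k)$) is slightly more explicit than the paper's, but the argument is the same.
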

 
 \begin{proof}\hfill
 
 1) From the assumptions we obtain that we have the following isomorphism of local Lie algebras:
 $$\Gamma({\go g}_{0},B_0,V)\simeq \Gamma(\go{g}_{0}^f,{B_{0}}_{|_{\go{g}_{0}^f}},V)\times  \go{g}_{0}^k.$$
 Then Remark \ref{rem-decomposition} implies $a)$ and $b)$.
  
  2)   Proposition \ref{prop-transitive} implies that   $\go{g}_{min}(\Gamma(\go{g}_{0}^f,{B_{0}}_{|_{\go{g}_{0}^f}},V_{1}))$ is transitive.

 It is easy to see that
  $$\Gamma({\go g}_{0},B_0,V)=\Gamma(\go{g}_{0}^f,{B_{0}}_{|_{\go{g}_{0}^f}}V_{1})\times (V_{0}\oplus V_{0}^*)\times  \go{g}_{0}^k.$$
 This is a direct sum of local Lie algebras where $V_{0}\oplus V_{0}^*$ and $\go{g}_{0}^k$ are already Lie algebras.
 Again Remark \ref{rem-decomposition} implies 
 $$\go{g}_{min}(\Gamma({\go g}_{0},B_0,V))\simeq \go{g}_{min}(\Gamma(\go{g}_{0}^f,{B_{0}}_{|_{\go{g}_{0}^f}}V_{1}))\times (V_{0}\oplus V_{0}^*)\times  \go{g}_{0}^k.$$
 
 3) As the center of $\go{g}_{min}(\Gamma({\go g}_{0},B_0,V))$ is trivial (Remark \ref{rem-centre}), the third assertion is now clear.

\end{proof}

 \begin{prop}\label{prop-finie-ss}\hfill
 
 Let $(\go{g}_{0},B_{0},\rho)$ be a fundamental triplet where $\go{g}_{0}$ is reductive and where $\rho$ is completely reducible.   Suppose that the local Lie algebra $\Gamma({\go g}_{0},B_0,\rho)$ is transitive. Then if $\dim(\go{g}_{min}(\Gamma({\go g}_{0},B_0,\rho)))$ is finite,  the Lie algebra $\go{g}_{min}(\Gamma({\go g}_{0},B_0,\rho))$ is semi-simple. 
 
  \end{prop}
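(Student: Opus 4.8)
The plan is to use the invariant bilinear form $B$ on $\go{g}_{min}(\Gamma({\go g}_{0},B_0,\rho))$ whose existence is announced in Section~3.5 (Proposition~\ref{prop-forme-globale}), together with the transitivity hypothesis which forces the center of $\go{g}_{min}(\Gamma({\go g}_{0},B_0,\rho))$ to be trivial (Remark~\ref{rem-centre}). The strategy is the classical Cartan-type argument: a finite-dimensional Lie algebra carrying a non-degenerate invariant symmetric bilinear form and having trivial center must be semi-simple. So the main work is to verify that $\go{g}:=\go{g}_{min}(\Gamma({\go g}_{0},B_0,\rho))$ really does carry such a non-degenerate $B$ under the present hypotheses, and then to run the Cartan argument cleanly.

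First I would invoke Proposition~\ref{prop-forme-globale}: since $\go{g}_{0}$ is reductive and $B_0$ is a non-degenerate invariant symmetric bilinear form on $\go{g}_{0}$, and since $\Gamma({\go g}_{0},B_0,\rho)$ is transitive by assumption, the form $B_0$ extends to an invariant symmetric bilinear form $B$ on $\go{g}=\go{g}_{min}(\Gamma({\go g}_{0},B_0,\rho))$ which is non-degenerate and satisfies $B(\go{g}_i,\go{g}_j)=0$ whenever $i\neq -j$. Here the finite-dimensionality hypothesis $\dim\go{g}<+\infty$ guarantees that $B$ is an honest bilinear form on all of $\go{g}$ with no convergence issues and that the pairing $\go{g}_i\times\go{g}_{-i}\to\C$ is a perfect pairing for every $i$. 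Next I would observe that, by Remark~\ref{rem-centre}, transitivity of the graded Lie algebra $\go{g}$ implies $Z(\go{g})=\{0\}$.

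Now the Cartan argument. Let $\go{r}$ be the radical (maximal solvable ideal) of the finite-dimensional Lie algebra $\go{g}$. The plan is to show $\go{r}=\{0\}$. Consider $\go{r}^\perp$, the orthogonal of $\go{r}$ with respect to $B$; invariance of $B$ makes $\go{r}^\perp$ an ideal, and non-degeneracy gives $\dim\go{r}+\dim\go{r}^\perp=\dim\go{g}$. The ideal $\go{r}\cap\go{r}^\perp$ is then an ideal on which $B$ vanishes identically, i.e. $B([\,\go{r}\cap\go{r}^\perp,\go{r}\cap\go{r}^\perp],\go{g})=0$, and being contained in the solvable ideal $\go{r}$ it is solvable. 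By the invariance identity $B([x,y],z)=B(x,[y,z])$ applied to Cartan's solvability criterion (in the form: an ideal $\go{a}$ with $B(\go{a},[\go{a},\go{g}])=0$, or more simply $\tr(\ad x\,\ad y)=0$ on it, combined with solvability) one deduces $[\go{r}\cap\go{r}^\perp,\go{r}\cap\go{r}^\perp]$ acts nilpotently; pushing this through one gets that $\go{r}\cap\go{r}^\perp$ lies in a nilpotent ideal, and the last nonzero term of its lower central series is a nonzero central ideal of $\go{g}$ unless $\go{r}\cap\go{r}^\perp=\{0\}$. Since $Z(\go{g})=\{0\}$, we conclude $\go{r}\cap\go{r}^\perp=\{0\}$, hence $\go{g}=\go{r}\oplus\go{r}^\perp$ as a direct sum of ideals. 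If $\go{r}\neq\{0\}$, then, being solvable, $\go{r}$ has a nonzero abelian ideal $\go{a}$ which is characteristic in $\go{r}$ and hence an ideal of $\go{g}$; but $\go{a}$ is an abelian ideal, and its centralizer contains $\go{r}$; combined with the direct sum decomposition and a further descent one again produces elements of $Z(\go{g})$, contradiction. Therefore $\go{r}=\{0\}$ and $\go{g}$ is semi-simple.

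The main obstacle I anticipate is the step extracting a nonzero central ideal from a nonzero solvable ideal that is isotropic for an invariant form; this is exactly the standard lemma "a Lie algebra with a non-degenerate invariant form and no nonzero abelian ideal — equivalently no center, by a short additional argument — is semi-simple", and I would either cite it or spell out the descent via the lower central / derived series, using at each stage that $B$ restricted to the relevant ideal controls nilpotency through Cartan's criterion. Everything else is bookkeeping: the perfect pairing $\go{g}_i\times\go{g}_{-i}$, the invariance of $\go{r}^\perp$, and the dimension count are immediate from Proposition~\ref{prop-forme-globale} and finite-dimensionality.
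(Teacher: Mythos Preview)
Your approach has a genuine gap: the ``classical'' lemma you are relying on --- that a finite-dimensional Lie algebra with a non-degenerate invariant symmetric bilinear form and trivial center is semi-simple --- is \emph{false}. A clean counterexample is the Takiff algebra $\go{g}=\go{s}[t]/(t^{2})=\go{s}\oplus t\go{s}$ for $\go{s}$ simple: the form $B(a+tb,\,c+td)=K(a,d)+K(b,c)$ (with $K$ the Killing form of $\go{s}$) is non-degenerate and invariant, the center is $\{0\}$, yet $t\go{s}$ is a nonzero abelian ideal. Tracing your argument on this example, one finds $\go{r}=t\go{s}=\go{r}^{\perp}$, so $\go{r}\cap\go{r}^{\perp}=t\go{s}$; its lower central series terminates at $t\go{s}$ itself, which is \emph{not} central in $\go{g}$. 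The step ``the last nonzero term of its lower central series is a nonzero central ideal of $\go{g}$'' conflates centrality in the ideal with centrality in $\go{g}$, and this is exactly where the argument breaks.

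The paper's proof proceeds quite differently and makes essential use of the hypotheses you do not touch: reductivity of $\go{g}_{0}$, complete reducibility of $\rho$, and above all the \emph{graded} structure together with transitivity. Reducing to the indecomposable case, the paper takes an arbitrary nonzero ideal $\go{a}$ and, using transitivity, produces elements of $\go{a}$ whose graded components in $\go{g}_{0}$ and $\go{g}_{1}$ are nonzero. The projections $\widetilde{\go{a}}_{0}\subset\go{g}_{0}$ and $\widetilde{\go{a}}_{1}\subset\go{g}_{1}$ are then shown (via $B_{0}$-orthogonal complements in $\go{g}_{0}$ and $\go{g}_{0}$-invariant complements in $V$, which is where reductivity and complete reducibility enter) to force a decomposition of the local part; indecomposability gives $\widetilde{\go{a}}_{0}=\go{g}_{0}$, $\widetilde{\go{a}}_{1}=\go{g}_{1}$. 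A second pass, now using that $\go{g}_{n}\subset\go{a}$ for the top degree $n$ and bracketing down with $\go{g}_{-1}$ via transitivity, shows $\go{a}$ actually contains the full local part, hence $\go{a}=\go{g}$. The extended form $B$ appears only as a tool inside this graded argument, not as the backbone of a Cartan-criterion proof.
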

 
 \begin{proof}\hfill
 
   It suffices to prove that if $\Gamma({\go g}_{0},B_0,\rho)$ cannot be  decomposed as in Remark \ref{rem-decomposition}, then $\go{g}_{min}(\Gamma({\go g}_{0},B_0,\rho))$ is simple. We know from \cite{Kac1} (Prop. 5, p.1278) that $\go{g}_{min}(\Gamma({\go g}_{0},B_0,\rho))$ is transitive. Denote by $\go{g}_{min}(\Gamma({\go g}_{0},B_0,\rho))=\oplus_{i\in\Z} \go{g}_{i}$ the grading. Let $\go{a}$ be a non zero  ideal of $\go{g}_{min}(\Gamma({\go g}_{0},B_0,\rho))$.    Let $a\in \go{a}$, $a\neq0$. Let $a=a_{-i}+ a_{-i+1}+\dots+a_{j}$ be the decomposition of $a$ according to the grading of $\go{g}_{min}(\Gamma({\go g}_{0},B_0,\rho))$, where either $-i\leq 0$ and $a_{-i}\neq 0$, or $j\geq 0$, and $a_{j}\neq 0$. Suppose for example that $-i\leq 0$ and $a_{-i}\neq 0$. From the transitivity of $\go{g}_{min}(\Gamma({\go g}_{0},B_0,\rho))$, we know that there exists $x_{1}^{i}\in \go{g}_{1}$ such that $[a_{-i}, x_{1}^{i}]\neq0$. Then $[a,x_{1}^{i}]=[a_{-i},x_{1}^{i}]+\dots +[a_{j},x_{1}^{i}]\in \go{a}$ therefore there exists an element $a'\in \go{a}$, such that $a'=a'_{-i+1}+ \dots+a'_{j+1}$ ($a'_{k}\in \go{g}_{k}$) and $a'_{-i+1}\neq 0$. By induction we prove that there exists an element $x= x_{0}+x_{1}+\dots \in \go{a}$ such that $x_{0}\neq0$ and also an element   $y=y_{1}+\dots \in \go{a}$ such that $y_{1}\neq 0$. Let $\cal{T}_{k}=\oplus_{n\geq k}\go{g}_{i}$ ($k\geq 0$). Denote by $\widetilde{\go{a}}_{0}$ (resp. $\widetilde{\go{a}}_{1}$) the projection of $\go{a}\cap {\cal T}_{0}$ on $\go{g}_{0}$ (resp.  the projection of $\go{a}\cap {\cal T}_{1}$ on $\go{g}_{1}$). 
   
   The preceding considerations show that $\widetilde{\go{a}}_{0}\neq \{0\}$ and $\widetilde{\go{a}}_{1}\neq \{0\}$. As $\go{a}$ is an ideal, $\widetilde{\go{a}}_{0}$ is an ideal of $\go{g}_{0}$ and $\widetilde{\go{a}}_{1}$ is a sub-$\go{g}_{0}$-module of $\go{g}_{1}=V$. Let $\widetilde{\go{b}}_{0}$ be the orthogonal of $\widetilde{\go{a}}_{0}$ in $\go{g}_{0}$ with respect to $B_{0}$, and let $\widetilde{\go{b}}_{1}$ be a $\go{g}_{0}$-invariant complementary space to $\widetilde{\go{a}}_{1}$ in $\go{g}_{1}$. That is $\go{g}_{1}=\widetilde{\go{a}}_{1}\oplus \widetilde{\go{b}}_{1}$.   As $\go{a}$ is an ideal we obtain $[\widetilde{\go{a}}_{0},\widetilde{\go{b}}_{1}]=\{0\}$. Let now $B$ be the extended form as defined in Proposition \ref{prop-forme-globale} below. Then,  as $[\widetilde{\go{a}}_{1},\go{g}_{-1}]\subset \widetilde{ \go{a}}_{0}$, we have for all $Y\in \go{g}_{-1}$, $B([\widetilde{\go{b}}_{0},\widetilde{\go{a}}_{1}],Y)=B(\widetilde{\go{b}}_{0},[\widetilde{\go{a}}_{1},Y])=\{0\}$. This shows that $[\widetilde{\go{b}}_{0},\widetilde{\go{a}}_{1}]$ is orthogonal to $\go{g}_{-1}$. Therefore $[\widetilde{\go{b}}_{0},\widetilde{\go{a}}_{1}]=\{0\}$. Let $x\in \widetilde{\go{a}}_{0}\cap \widetilde{\go{b}}_{0}$. Then $[x, \widetilde{\go{a}}_{1}+\widetilde{\go{b}}_{1}]=[x,\go{g}_{1}]=\{0\}$. As $\Gamma(\go{g}_{0}, B_{0}, \rho)$ is transitive we obtain that $x=0$. Hence $\go{g}_{0}=\widetilde{\go{a}}_{0}\oplus \widetilde{\go{b}}_{0}$.

   We have supposed that  $\Gamma({\go g}_{0},B_0,\rho)$ is not decomposable  in the sense of Remark \ref{rem-decomposition}. Then $\widetilde{\go{b}}_{0}=\{0\}$ and $\widetilde{\go{b}}_{1}=\{0\}$, and $\go{g}_{0}=\widetilde{\go{a}}_{0}$ and $\go{g}_{1}=\widetilde{\go{a}}_{1}$.
   
  As $\go{g}_{min}(\Gamma({\go g}_{0},B_0,\rho))$ is finite dimensional we can write $\go{g}_{min}(\Gamma({\go g}_{0},B_0,\rho))=\oplus_{i=-n}^n \go{g}_{i}$. As $\go{g}_{min}(\Gamma({\go g}_{0},B_0,\rho))$ is generated by its local part, we obtain that  any $X_{n}\in \go{g}_{n}$ is a linear combination of elements of the form   $ [\dots[X_{1}^1,X_{1}^2]\dots]X_{1}^n]  $ where $X_{1}^1,\dots,X_{1}^n \in \go{g}_{1}$. But as $\go{g}_{1}=\widetilde{\go{a}}_{1}$, we obtain that $X_{n}\in \go{g}_{n}\cap \go{a}$, and hence $\go{g}_{n}\cap \go{a}=\go{g}_{n}$.
  
  From the transitivity, we know that there exist $Y_{1}^1,\dots,Y_{1}^n\in \go{g}_{-1}$ such that $[Y_{1}^n,[Y_{1}^{n-1}, \dots[Y_{1}^1,X_{n}]\dots]\neq 0$. This proves that $\go{a}_{1}=\go{a}\cap \go{g}_{1} \neq \{0\}$ and $\go{a}_{0}=\go{a}\cap \go{g}_{0} \neq \{0\}$. Then the same reasoning as above shows that $\go{a}_{0}=\go{g}_{0}$ and $\go{a}_{1}=\go{g}_{1}$. As $[\go{g}_{0},\go{g}_{-1}]=\go{g}_{1}$ (this is again the transitivity condition), we have also that $\go{a}_{-1}= \go{a}\cap \go{g}_{-1}=\go{g}_{-1}$.
  
  Finally we have proved that $\Gamma({\go g}_{0},B_0,\rho)\subset \go{a}$. Hence $\go{g}_{min}(\Gamma({\go g}_{0},B_0,\rho))=\go{a}$.
 
 \end{proof}

 \begin{cor}\label{cor-dim-centre} \hfill
 
 Suppose that $\go{g}_{0}$ is reductive and that $(\rho,V)$ is a faithful completely reducible $\go{g}_{0}$-module which does not contain the trivial module. 
 
 Let $k$ denote the number of irreducible components of $V$. 

 If $\dim Z(\go{g}_{0})< k$, then $\dim \go{g}_{min}(\Gamma(\go{g}_{0}, B_{0}, \rho))=+\infty$.
 
\end{cor}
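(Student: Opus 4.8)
The plan is to argue by contraposition: I assume $\go{g}:=\go{g}_{min}(\Gamma(\go{g}_{0},B_{0},\rho))$ is finite dimensional and deduce that $\dim Z(\go{g}_{0})=k$, which contradicts the hypothesis $\dim Z(\go{g}_{0})<k$. As usual I identify $\go{g}_{0}$ with the degree-zero component of $\go{g}$ and $V$ with $\go{g}_{1}$ (Theorem \ref{th-Kac}). Since $(\rho,V)$ is faithful, completely reducible and contains no copy of the trivial module, Proposition \ref{prop-transitive}(2) shows that $\Gamma(\go{g}_{0},B_{0},\rho)$ is transitive; Proposition \ref{prop-finie-ss} then gives that $\go{g}$ is semi-simple. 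Being a minimal algebra, $\go{g}$ is generated by its local part, so Remark \ref{rem-graduations-qq-ss} forces the weighted Dynkin diagram attached to the grading of $\go{g}$ to carry only the weights $0$ and $1$; equivalently, as in Example \ref{ex-PH-paraboliques}, if $H_{0}\in\go{g}_{0}$ is the grading element and $\Psi$ is a system of simple roots with $\alpha(H_{0})\in\{0,1\}$ for all $\alpha\in\Psi$, then $\go{g}_{0}$ is the Levi subalgebra $\go{l}_{\theta}$ with $\theta=\{\alpha\in\Psi:\alpha(H_{0})=0\}$.

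The heart of the proof is then a bookkeeping on this parabolic grading. On one hand, $\dim Z(\go{g}_{0})=\dim Z(\go{l}_{\theta})=\rank(\go{g})-|\theta|=|\Psi\setminus\theta|$. On the other hand, because the weights are only $0$ and $1$, the positive part $\go{n}^{+}=\bigoplus_{i\geq 1}\go{g}_{i}$ is the nilradical of the parabolic $\bigoplus_{i\geq 0}\go{g}_{i}$ and satisfies $[\go{n}^{+},\go{n}^{+}]=\bigoplus_{i\geq 2}\go{g}_{i}$, so that $\go{g}_{1}\simeq\go{n}^{+}/[\go{n}^{+},\go{n}^{+}]$ as $\go{g}_{0}$-modules; by the standard description of this abelianization, $\go{g}_{1}$ is a direct sum of exactly $|\Psi\setminus\theta|$ irreducible $\go{g}_{0}$-submodules, one for each simple root in $\Psi\setminus\theta$ (the one whose lowest weight is that root). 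Hence $k$, the number of irreducible components of $V=\go{g}_{1}$, equals $|\Psi\setminus\theta|=\dim Z(\go{g}_{0})$, contradicting $\dim Z(\go{g}_{0})<k$. Therefore $\dim\go{g}_{min}(\Gamma(\go{g}_{0},B_{0},\rho))=+\infty$.

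I expect the only delicate point to be this last structural fact, namely that for a grading of a semi-simple Lie algebra with weights in $\{0,1\}$ the number of irreducible $\go{g}_{0}$-constituents of $\go{g}_{1}$ equals $|\Psi\setminus\theta|=\dim Z(\go{g}_{0})$. It is classical and already underlies the discussion of prehomogeneous spaces of parabolic type in Example \ref{ex-PH-paraboliques}, but it may be cleanest to verify it one simple ideal at a time: writing $\go{g}=\bigoplus_{j}\go{s}_{j}$, the grading element $H_{0}$ normalizes each $\go{s}_{j}$, the weighted diagram of $\go{g}$ is the disjoint union of those of the $\go{s}_{j}$, and $\go{g}_{0}=\bigoplus_{j}(\go{s}_{j}\cap\go{g}_{0})$, $V=\go{g}_{1}=\bigoplus_{j}(\go{s}_{j}\cap\go{g}_{1})$ as $\go{g}_{0}$-modules, with the ideals $\go{s}_{l}\cap\go{g}_{0}$ ($l\neq j$) acting trivially on $\go{s}_{j}\cap\go{g}_{1}$; this reduces both sides of the count to the simple case, where it is the familiar statement about parabolic subalgebras. (One should also note at the outset that if $V=\{0\}$ the hypothesis $\dim Z(\go{g}_{0})<k=0$ is vacuous, so one may harmlessly assume $V\neq\{0\}$, hence $\go{g}_{1}\neq\{0\}$ and $\Psi\setminus\theta\neq\emptyset$.)
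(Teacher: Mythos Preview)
Your proof is correct and follows essentially the same route as the paper: assume finite dimension, invoke Proposition~\ref{prop-finie-ss} to get semi-simplicity, recognise $\go{g}_{0}$ as a Levi subalgebra via Remark~\ref{rem-graduations-qq-ss} and Example~\ref{ex-PH-paraboliques}, and conclude $\dim Z(\go{g}_{0})=k$. The only difference is that where the paper cites an external reference (Proposition~4.4.2~d) of \cite{Rubenthaler-note-PV}) for the identity $\dim Z(\go{g}_{0})=k$, you supply a self-contained sketch via $|\Psi\setminus\theta|$ and the abelianisation $\go{n}^{+}/[\go{n}^{+},\go{n}^{+}]$; you also make the transitivity check explicit before applying Proposition~\ref{prop-finie-ss}, which the paper's proof leaves implicit.
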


\begin{proof}\hfill

First remember from Remark \ref{rem-grading-elem-center} $2)$  that, as  the representation is faithful, we have  $\dim Z(\go{g}_{0})\leq k$.

From Proposition \ref{prop-finie-ss} we know that if $\dim \go{g}_{min}(\Gamma(\go{g}_{0}, B_{0}, \rho))<+\infty$ then the Lie algebra $\go{g}_{min}(\Gamma(\go{g}_{0}, B_{0}, \rho))$ is semi-simple. Then, if $\go{g}_{min}(\Gamma(\go{g}_{0}, B_{0}, \rho))=\oplus_{i=-n}^{i=n}\go{g}_{i}$ is a grading, the Lie algebra  $\go{g}_{0}$ is a Levi sub-algebra of $\go{g}_{min}(\Gamma(\go{g}_{0}, B_{0}, \rho))$ (see Example \ref{ex-PH-paraboliques}). But then $\dim Z(\go{g}_{0})=k$ (see  for example   Proposition 4.4.2 d) in \cite{Rubenthaler-note-PV}).

\end{proof}
\vskip 5pt
\subsection { Invariant bilinear forms} \hfill
 \vskip5pt

  Consider again a general quadratic Lie algebra $\go{g}_{0}$ with a non degenerate symetric bilinear form  $B_{0}$. We will first show that   $B_{0}$  extends  to an invariant form on the  local Lie algebra $ \Gamma({\go g}_{0},B_0,\rho)$.  
  \goodbreak
  
   Define a symmetric bilinear form $B$ on $ \Gamma({\go g}_{0},B_0,\rho)$ by setting:
 
 $-$ $\forall\, u, v \in {\go g}_{0}, B(u,v)=B_{0}(u,v)$
 
 $-$ $\forall \, u  \in  {\go g}_{0}, \forall\, X\in {\go g}_{1}=V, \, \forall \,Y\in{\go g}_{-1}=V^*$
  $$B(u,X)=B(X,u)=B(u,Y)=B(Y,u)=0 \eqno (*)$$

  $-$ $\forall\, X\subset{\go g}_{1}=V, \, \forall \,Y\subset{\go g}_{-1}=V^*$, $B(X,Y)=B(Y,X)=Y(X) \hfill (**)$

 \begin{lemma}\label{lemma-forme-locale}\hfill

  a) The form  $B$ is a non degenerate  invariant form on $\Gamma({\go g}_{0},B_0,\rho)$ $($the definition of an invariant form on a local Lie algebra is analogous to the Lie algebra case$)$.
  
  b) Suppose that there exists a grading element in $\Gamma({\go g}_{0},B_0,\rho)$, that is an element $H_{0}\in {\go g}_{0}$ such that $[H_{0},x]=ix$ for $x\in {\go g}_{i}$, $i=-1,0,1$,  then the preceding form $B$ is the only invariant extension of $B_{0}$ to $\Gamma({\go g}_{0},B_0,\rho)$.
 \end{lemma}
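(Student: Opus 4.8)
The plan is to verify directly that $B$ is symmetric, nondegenerate, and invariant, and then to use the grading element to pin down uniqueness. Symmetry is built into the definition, so first I would check nondegeneracy: the form $B$ is block-diagonal with respect to the decomposition ${\go g}_{0}\oplus({\go g}_{1}\oplus{\go g}_{-1})$, since $(*)$ kills all cross terms between ${\go g}_{0}$ and ${\go g}_{\pm 1}$. On ${\go g}_{0}$ it restricts to $B_{0}$, which is nondegenerate by hypothesis. On ${\go g}_{1}\oplus{\go g}_{-1}=V\oplus V^{*}$ the form is the canonical pairing $(X,Y)\mapsto Y(X)$ between $V$ and $V^{*}$ extended by zero on $V\times V$ and $V^{*}\times V^{*}$; this is manifestly nondegenerate because $V^{*}$ separates points of $V$ and vice versa. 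A direct sum of nondegenerate forms is nondegenerate, giving nondegeneracy of $B$.

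For invariance I would check the identity $B([x,y],z)=B(x,[y,z])$ case by case according to where $x,y,z$ lie, whenever all brackets involved are defined. The only genuinely nontrivial cases are those mixing ${\go g}_{0}$ with ${\go g}_{\pm 1}$ and the one mixing ${\go g}_{1}$ with ${\go g}_{-1}$. For $x=U\in{\go g}_{0}$, $y=X\in{\go g}_{1}$, $z=Y\in{\go g}_{-1}$: the left side is $B([U,X],Y)=B(U.X,Y)=Y(U.X)$, while the right side is $B(U,[X,Y])=B_{0}(U,[X,Y])=B_{0}([X,Y],U)=Y(U.X)$ by defining property $b)$ of Theorem \ref{th-algebre-locale}; the two agree. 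The case with $X$ and $Y$ interchanged is handled the same way using $Y(U.X)=-(U.Y)(X)$. The remaining cases either reduce to the invariance of $B_{0}$ on ${\go g}_{0}$ (all three arguments in ${\go g}_{0}$), or have both sides equal to zero because the relevant bracket lands in a subspace paired trivially under $B$ (e.g. $x,y,z$ with two in ${\go g}_{1}$ forces a bracket into ${\go g}_{2}$, undefined, so that case is vacuous; $x=X, y=Y, z=U$ is just a relabelling of the case already treated). I expect this bookkeeping to be the bulk of part a), but each individual identity is immediate from the definitions.

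For part b), suppose $B'$ is any invariant symmetric extension of $B_{0}$ to $\Gamma({\go g}_{0},B_0,\rho)$, and let $H_{0}$ be the grading element. The key observation is the standard orthogonality trick: for $x\in{\go g}_{i}$, $w\in{\go g}_{j}$ with $i+j\neq 0$ (and $|i|,|j|,|i+j|\le 1$ so the brackets make sense), invariance gives $(i+j)B'(x,w)=B'([H_{0},x],w)+B'(x,[H_{0},w])=B'([H_{0},x]+[H_{0},x'],\dots)$ — more precisely $B'([H_{0},x],w)=B'(H_{0},[x,w])=B'(x,[H_{0},w])$ forces $iB'(x,w)=-jB'(x,w)$, hence $B'(x,w)=0$. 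This kills all the cross terms, so $B'$ agrees with $B$ on ${\go g}_{0}\times{\go g}_{0}$ (where both equal $B_{0}$), on ${\go g}_{0}\times{\go g}_{\pm1}$, and on ${\go g}_{1}\times{\go g}_{1}$ and ${\go g}_{-1}\times{\go g}_{-1}$. It remains to show $B'(X,Y)=Y(X)$ for $X\in{\go g}_{1}$, $Y\in{\go g}_{-1}$. Here I would use invariance once more together with the fact that $\langle{\go g}_{0}.V\rangle$ behaviour is controlled by $H_{0}$: writing $X=[H_{0},X]$ and using $B'([H_{0},X],Y)=B'(H_{0},[X,Y])=B_{0}(H_{0},[X,Y])$, which by Theorem \ref{th-algebre-locale}$b)$ equals $Y(H_{0}.X)=Y(X)$. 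Thus $B'(X,Y)=Y(X)=B(X,Y)$, and $B'=B$. The main obstacle is simply organizing the case analysis in part a) cleanly; part b) is short once the $H_{0}$-eigenvalue argument is in place.
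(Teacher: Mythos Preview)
Your proposal is correct and follows essentially the same approach as the paper: the same case-by-case invariance check in part a) (the key identity $B([X,Y],U)=Y(U.X)=B(X,[Y,U])$ coming straight from the defining property of the bracket), and the same $H_{0}$-eigenvalue argument in part b) to kill off-diagonal terms and then recover $B'(X,Y)=Y(X)$ via $B'([H_{0},X],Y)=B_{0}(H_{0},[X,Y])$. The only difference is that you spell out the nondegeneracy explicitly, while the paper leaves it implicit.
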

 
 \begin{proof} \hfill
 
 a) Of course as   $B_{|_{{\go g}_{0}\times {\go g}_{0}}}=B_{0}$, the invariance is verified on ${\go g}_{0}$. Let $X\in {\go g}_{1}, Y\in {\go g}_{-1}, u,v\in {\go g}_{0}$. From the definition of $[X,Y]$ (see Theorem \ref{th-algebre-locale}), we have $B([X,Y],u)=Y(u.X)$. On the other hand we have  $B(X,[Y,u])=B(X,-u.Y)=-u.Y(X)=Y(u.X)$. Hence $B([X,Y],u)=B(X,[Y,u])$. We have  also $B([u,v],X)=0=B(u,v.X)=B(u,[v,X])$. Similarly $B([u,v],Y)=B(u,[v,Y])=0$.
 
 b) Suppose that $x\in {\go g}_{i}$ and $y\in {\go g}_{j}$ with $i+j\neq 0$. Then $B([H_{0},x],y)=B(ix,y)=iB(x,y)=-B(x,[H_{0},y]) =-jB(x,y)$. Therefore $(i+j)B(x,y)=0$, and hence $B(x,y)=0$.   We also have for $X\in V={\go g}_{1}$ and $Y\in V^*= {\go g}_{-1}$: $B([X,Y], H_{0})=Y(X)= B(X,[Y,H_{0}])= B(X,Y)$. Hence conditions $(*)$ and $(**)$ are satisfied.

 \end{proof}

  \begin{prop}\label{prop-forme-globale}  \hfill

 
 {\rm 1)} If $\Gamma(\go{g}_{0},B_{0},  \rho)$ is transitive, then the extended form $B$ on 
$\go{g}_{min}(\Gamma(\go{g}_{0},B_{0}, \rho))$ defined in Proposition \ref{prop-Kac} is non-degenerate.

 {\rm 2)} The extended form $B$ on $\go{g}_{min}(\Gamma(\go{g}_{0},B_{0}, \rho))$ is also  non-degenerate in the case where $\go{g}_{0}$ is reductive, the representation $V$ is completely reducible, and the restriction of $B_{0}$ to $Z(\go{g}_{0})\cap (\ker \rho)$ is non degenerate.
 
 \end{prop}  
 
 \begin{proof}\hfill

 1) Suppose  that $\Gamma(\go{g}_{0},B_{0}, \rho)$ is transitive. Let us  denote the grading by    $\go{g}_{min}(\Gamma(\go{g}_{0},B_{0},V))=\oplus_{i\in \Z} \go{g}_{i}$. We must prove that if $X\in \go{g}_{i}$ is such that $B(X,Y)=0$  for all $Y\in \go{g}_{-i}$, then $X=0$. We will first prove the result by induction for $i\geq0$.
 
 From the definition of $B$ on   $\Gamma(\go{g}_{0},B_{0}, \rho)$ we see that the result is true for $i=0$ and $i=1$. Suppose now that the result is true for $i< k$. Let $x_{k}\in \go{g}_{k}$ such that $B(x_{k}, \go{g}_{-k})=0$. Then for all $x_{-1}\in \go{g}_{-1}$ and all $x_{-k+1}\in \go{g}_{-k+1}$ we have $B(x_{k}, [x_{-1},x_{-k+1}])=0$. And hence $B([x_{k},x_{-1}],x_{-k+1})=0$. From the induction hypothesis we get $[x_{k},x_{-1}]=0$ for all $x_{-1}\in \go{g}_{-1}$.    But we know from \cite{Kac1} (Prop. 5, p. 1278) that a minimal graded Lie algebra with a transitive local part is transitive. This implies that $x_{k}=0$.
 The same proof works for $i\leq 0$.
 
 2) This is a consequence of 1) and the explicit form of   $\go{g}_{min}(\Gamma(\go{g}_{0},B_{0}, \rho))$ given in this case in Proposition \ref{prop-cas-non-transitif} 2).
   
   \end{proof}

   \begin{definition}\label{def-isometrie-alg-grad}\hfill
   
   $1)$ A quadratic graded Lie algebra is a pair $(\go{g}, B)$ where $\go{g}=\oplus_{i\in \Z}\go{g}_{i}$ is a graded Lie algebra, and where $B$ is a non-degenerate symmetric invariant bilinear form on $\go{g}$ such that $B(\go{g}_{i},\go{g}_{j})= 0$ if $i\neq -j$.

$2)$ Let $(\go{g}^1, B^1)$ and $(\go{g}^2, B^2)$ be two quadratic graded Lie algebras. An isomorphism from $(\go{g}^1, B^1)$ onto $(\go{g}^2, B^2)$ is an isomorphism of quadratic graded Lie algebras $\Psi: \go{g}^1\longrightarrow \go{g}^2$ such that:
$$\forall x,y \in \go{g}^1,\,\, B^2(\Psi(x),\Psi(y))=B^1(x,y).$$
   
 \end{definition}

 \begin{theorem}\label{th-bijection}\hfill
 
 Let ${\cal T}$ be the set  of isomorphism classes of fundamental triplets such that the representation $(\rho, V)$ is faithful and such  that $\langle \go{g}_{0}.V\rangle =V$ and $\langle  \go{g}_{0}.V^*\rangle =V^*$.
 Let ${\cal G}$ be the set of isomorphism  classes of transitive quadratic graded Lie algebras.  
 
  The map $$\tau: {\cal T}\longrightarrow {\cal G}$$ defined by $\tau(\overline{(\go{g}_{0},B_{0},\rho)})=\overline{(\go{g}_{min}(\Gamma(\go{g}_{0},B_{0}, \rho)),B)}$ is a bijection $($here $B$ is the form defined in Proposition \ref{prop-forme-globale} and  the "overline" denotes  the equivalence class$)$.
 
  

 \end{theorem}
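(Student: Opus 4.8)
The plan is to verify in turn that $\tau$ is well defined on equivalence classes (and lands in $\mathcal G$), that it is injective, and that it is surjective. Throughout I will rely on Theorem \ref{th-Kac}, Theorem \ref{th-extension-iso}, Proposition \ref{prop-transitive}, Proposition \ref{prop-forme-globale} and the local-to-global argument already used for Proposition \ref{prop-reductive=minimal}.

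\textbf{$\tau$ is well defined.} For a triplet $(\go g_0,B_0,\rho)$ in $\mathcal T$ the local algebra $\Gamma(\go g_0,B_0,\rho)$ is transitive by Proposition \ref{prop-transitive}, hence so is $\go g_{min}(\Gamma(\go g_0,B_0,\rho))$ by \cite{Kac1}; by Proposition \ref{prop-forme-globale} the extended form $B$ is non-degenerate and satisfies $B(\go g_i,\go g_j)=0$ for $i\neq -j$, so $\tau(\overline{(\go g_0,B_0,\rho)})$ genuinely lies in $\mathcal G$. If now $(A,\gamma)$ is an isomorphism of fundamental triplets, Theorem \ref{th-extension-iso} extends it to an isomorphism $\Psi_{(A,\gamma)}=({}^{t}\gamma^{-1},A,\gamma)$ of local Lie algebras; a one-line check shows that $\Psi_{(A,\gamma)}$ is an isometry for the local forms (on $\go g_0$ because $(A,\gamma)$ is an isomorphism of triplets, and $(\gamma X,{}^{t}\gamma^{-1}Y)\mapsto({}^{t}\gamma^{-1}Y)(\gamma X)=Y(X)$ on the $V$--$V^{*}$ part). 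By the universal property of $\go g_{max}$ (applied to $\Psi_{(A,\gamma)}$ and its inverse) this induces an isomorphism of graded Lie algebras $\go g_{max}(\Gamma_1)\to\go g_{max}(\Gamma_2)$, which carries $J_{max}^1$ onto $J_{max}^2$ since $J_{max}$ is characterised intrinsically, hence descends to an isomorphism $\go g_{min}(\Gamma_1)\to\go g_{min}(\Gamma_2)$ restricting to $\Psi_{(A,\gamma)}$ on local parts. As this isomorphism is graded and isometric on the local part, the uniqueness clause in Proposition \ref{prop-forme-globale}~1) forces it to be an isometry for the extended forms. Thus $\tau$ is constant on equivalence classes.

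\textbf{$\tau$ is injective.} Suppose $\Psi$ is an isomorphism of quadratic graded Lie algebras from $\go g_{min}(\Gamma(\go g_0^1,B_0^1,\rho_1))$ onto $\go g_{min}(\Gamma(\go g_0^2,B_0^2,\rho_2))$. Being graded, $\Psi$ restricts to an isomorphism of the local parts $\Gamma(\go g_0^1,B_0^1,\rho_1)\to\Gamma(\go g_0^2,B_0^2,\rho_2)$; put $A=\Psi|_{\go g_0^1}$ and $\gamma=\Psi|_{V_1}$. Then $A$ is a Lie algebra isomorphism, and because $\Psi$ is a homomorphism of local Lie algebras we get $\rho_2(A(U))\circ\gamma=\gamma\circ\rho_1(U)$, i.e. condition $1)$ of Proposition \ref{prop-CNS-iso}; moreover $B_0^2(A(U),A(V))=B^2(\Psi(U),\Psi(V))=B^1(U,V)=B_0^1(U,V)$ since $\Psi$ is an isometry and $B^i$ restricts to $B_0^i$ on $\go g_0^i$. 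Hence $(A,\gamma)$ is an isomorphism of fundamental triplets in the sense of Definition \ref{def-morphism-triplets}, so the two triplets determine the same class in $\mathcal T$.

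\textbf{$\tau$ is surjective, and the main obstacle.} Let $(\go g,B)$ be a transitive quadratic graded Lie algebra, $\go g=\oplus_i\go g_i$. Set $\go g_0:=\go g_0$, $B_0:=B|_{\go g_0\times\go g_0}$ (non-degenerate because $B$ is non-degenerate and $B(\go g_0,\go g_j)=0$ for $j\neq0$), and let $\rho$ be the representation of $\go g_0$ on $V:=\go g_1$ given by the bracket. Identifying $\go g_{-1}$ with $\go g_1^{*}=V^{*}$ via $B$, invariance of $B$ shows, exactly as in the proof of Proposition \ref{prop-reductive=minimal}, that $[U,Y]=\rho^{*}(U)Y$ and $B_0([X,Y],U)=Y(U.X)$ for $U\in\go g_0$, $X\in V$, $Y\in V^{*}$; hence the local part $\Gamma(\go g)$ is isomorphic, isometrically for $B$ and for the local form of Lemma \ref{lemma-forme-locale}, to $\Gamma(\go g_0,B_0,\rho)$. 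Transitivity of $\go g$ passes to $\Gamma(\go g)$, hence to $\Gamma(\go g_0,B_0,\rho)$, so $(\go g_0,B_0,\rho)\in\mathcal T$ by Proposition \ref{prop-transitive}. The one genuinely structural point is that $\go g$ is not merely some graded Lie algebra with this local part but is itself the minimal one: by Theorem \ref{th-Kac}~2) there is a graded ideal $K\lhd\go g$ with $K\cap\Gamma(\go g)=\{0\}$ and $\go g/K\simeq\go g_{min}(\Gamma(\go g_0,B_0,\rho))$, and if $K\neq\{0\}$ we may pick a nonzero homogeneous $x\in K\cap\go g_k$ with, say, $k\geq2$ (the case $k\leq-2$ being symmetric) and use transitivity to produce $y_1,\dots,y_{k-1}\in\go g_{-1}$ with $[[\dots[x,y_1],\dots],y_{k-1}]$ a nonzero element of $K\cap\go g_1=\{0\}$, a contradiction; so $K=\{0\}$ and $\go g\simeq\go g_{min}(\Gamma(\go g_0,B_0,\rho))$. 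Finally $B$ restricts on the local part to the form of Lemma \ref{lemma-forme-locale} and satisfies $B(\go g_i,\go g_j)=0$ for $i\neq-j$, so by the uniqueness in Proposition \ref{prop-forme-globale}~1) it is the extended form, whence $\tau(\overline{(\go g_0,B_0,\rho)})=\overline{(\go g,B)}$. Everything other than the transitive-implies-minimal descent argument is bookkeeping with the results already established.
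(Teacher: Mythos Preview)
Your proof is correct and follows the same three-step strategy (well-defined, injective, surjective) as the paper, with the same conceptual ingredients. Two minor but worthwhile differences: where the paper verifies that the induced isomorphism $\go g_{min}(\Gamma_1)\to\go g_{min}(\Gamma_2)$ is an isometry by an explicit induction on the degree $i$ (checking $i=0,1$ by hand and then using the inductive definition of the extended form), you invoke the uniqueness clause of Proposition~\ref{prop-forme-globale}~1) applied to the pulled-back form $\Psi^*B^2$, which is cleaner; and where the paper simply cites \cite{Kac1}, Prop.~5~a), for ``transitive implies minimal'', you give the direct descent argument with a nonzero homogeneous element of the kernel, which is essentially Kac's proof spelled out. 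Both substitutions are sound and do not change the logical architecture.
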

 \begin{proof}\hfill
 
 If $T $ is a fundamental triplet satisfying the given assumptions,  then its local part $\Gamma(T)$ is transitive from Proposition \ref{prop-transitive}. And hence $\go{g}_{min}(\Gamma(T))$ is transitive from \cite{Kac1}, Prop. 5 b), p. 1278. Moreover,  with the form $B$ defined in Proposition \ref{prop-forme-globale},   the Lie algebra $\go{g}_{min}(\Gamma(T))$ becomes a quadratic graded Lie algebra.

 Let $T_{1}$ and $T_{2} $  be two fundamental triplets and let $\Gamma(T_{1})$ and $\Gamma(T_{2})$ be the corresponding local Lie algebras. Suppose that $T_{1}\simeq T_{2}$. Then from Theorem \ref{th-extension-iso}, we have $\Gamma(T_{1})\simeq \Gamma(T_{2})$, and hence $\go{g}_{min}(T_{1})\simeq \go{g}_{min}(T_{2})$ (isomorphism of graded Lie algebras).  
 
Let $T_{1}=\Gamma(\go{g}_{0}^1,B_{0}^1,\rho_{1})$, and $T_{2}=\Gamma(\go{g}_{0}^2,B_{0}^2,\rho_{2})$ the explicit triplets we consider. Let $\Psi: \go{g}_{min}(\Gamma(\go{g}_{0}^1,B_{0}^1,\rho_{1}))\longrightarrow \go{g}_{min}(\Gamma(\go{g}_{0}^2,B_{0}^2,\rho_{2}))$ be the preceding isomorphism. We will   prove that $\Psi$ is in fact an isomorphism of quadratic graded Lie algebras. Therefore we must prove that if $x\in \go{g}_{min}(\Gamma(\go{g}_{0}^1,B_{0}^1,\rho_{1}))_{i}$ $(i\geq 0)$) and $y\in \go{g}_{min}(\Gamma(\go{g}_{0}^1,B_{0}^1,\rho_{1}))_{-i}$, then 
$$B^2(\Psi(x),\Psi(y))=B^1(x,y)\eqno{(*)}$$
where $B^1$ and  $B^2$ are the extended forms defined in Proposition \ref{prop-forme-globale}. 
We will prove $(*)$ by induction on $i$. It is clear that $(*)$ is true for $i=0$ (see condition $a) $ in definition \ref{def-morphism-triplets}). We need also to prove this for $i=1$. Set $\Psi_{|_{V_{1}}}=\gamma$. Then from the proof of Theorem \ref{th-extension-iso} we have $\Psi_{|_{V_{-1}}}= ^t\kern -4pt \gamma^{-1}$. Therefore for $x\in V_{1}, y\in V_{1}^*$, we have:
$$B^2(\Psi(x),\Psi(y))=B^2(\gamma(x), ^t\kern -4pt \gamma^{-1}(y))= ^t\kern -4pt \gamma^{-1}(y)(\gamma(x))=y(x)=B^1(x,y)$$
This proves $(*)$ for $i=1$.

Suppose now that $(*)$ is true for $0\leq i< k$. In the rest of the proof  the elements $x_{i}$ or $ y_{i}$ belong always to $ \go{g}_{min}(\Gamma(\go{g}_{0}^1,B_{0}^1,\rho_{1}))_{i}$.  Any $x\in  \go{g}_{min}(\Gamma(\go{g}_{0}^1,B_{0}^1, \rho_{1}))_{k}$ is a linear combination of elements of the form $[x_{k-1},x_{1}]$ and any $y\in \go{g}_{min}(\Gamma(\go{g}_{0}^1,B_{0}^1,\rho_{1}))_{-k}$ is a linear combination of elements of the form $[y_{-k+1}, y_{-1}]$. Then from the proof of Proposition 7 p. 1279 of \cite{Kac1} the extended form is defined inductively by
$$B^1([x_{k-1},x_{1}],[y_{-k+1},y_{-1}])=B^1([[x_{k-1},x_{1}],y_{-k+1}], y_{-1})$$
and the same is true for $B^2$.

Therefore 
$$\begin{array}{rl}
B^2(\Psi([x_{k-1},x_{1}]),\Psi([y_{-k+1},y_{-1}])&=B^2( [\Psi(x_{k-1}),\Psi(x_{1})], [\Psi(y_{-k+1}),\Psi(y_{-1})])\\
&=B^2([[\Psi(x_{k-1}),\Psi(x_{1})],\Psi(y_{-k+1})], \Psi(y_{-1}))\\
&=B^2(\Psi([[ x_{k-1},x_{1}], y_{-k+1}]), \Psi(y_{-1}))\\
\text{(by induction:)}&=B^1([[ x_{k-1},x_{1}], y_{-k+1}], y_{-1})\\
&= B^1([x_{k-1},x_{1}],[y_{-k},y_{-1}])
\end{array}$$ 
Therefore $(*)$ is proved.  Hence if $T_{1}\simeq T_{2}$, then the algebras  $\go{g}_{min}(T_{1})$and $ \go{g}_{min}(T_{2})$ are isomorphic as quadratic graded Lie algebras. In other words the map $\tau$ is well defined.

Let ($\go{g}$,B) be a transitive quadratic graded Lie algebra, whith local part $\Gamma(\go{g})= \go{g}_{-1}\oplus \go{g}_{0}\oplus \go{g}_{1}$.  As a transitive graded algebra is minimal (\cite{Kac1}, Prop. 5 a), p. 1278) and has of course a transitive local part, we have $\go{g}=\go{g}_{min}(\Gamma(\go{g}))$. Moreover the properties of $B$ imply immediately that if $V=\go{g}_{1}$ denotes the corresponding $\go{g}_{0}$-module then $\go{g}_{-1}=V^*$, the dual $\go{g}_{0}$-module. And then, from the transitivity we obtain that  the representation $(\rho, V)$ is faithful and   that $\langle \go{g}_{0}.V\rangle =V$ and $\langle  \go{g}_{0}.V^*\rangle =V^*$ (Proposition \ref{prop-transitive}). This proves that the map $\tau$ is surjective.

Suppose now that $\Psi: \go{g}_{min}(\Gamma(\go{g}_{0}^1,B_{0}^1,\rho_{1}))\longrightarrow \go{g}_{min}(\Gamma(\go{g}_{0}^2,B_{0}^2,\rho_{2}))$ is now an isomorphism of {\it quadratic} graded Lie algebra. Set $\Psi_{|_{\go{g}_{0}^0}}=A$ and $\Psi_{|_{\go{g}_{0}^1}}=\gamma$. Then $A$ is a Lie algebra isomorphism from $\go{g}_{0}^1$ onto $\go{g}_{0}^2$ and $\gamma$ is an isomorphism fron $\go{g}_{1}^1$ onto $\go{g}_{1}^2$ which satisfy:

-- $\forall U, V\in \go{g}_{0}^1$, $B^2(A(U),A(V))=B^1(U,V)$ and 

-- $\forall U, \in \go{g}_{0}^1$ and $X\in \go{g}_{1}^1$, $\Psi([U,X])=[\Psi(U),\Psi(X)]=[A(U),\gamma(X)]$ and this means exactly, that in the usual notations, $\gamma\circ\rho_{1}(U)X= \rho_{2}(A(U))\circ\gamma(X)$.

Hence $(A,\gamma)$ is an isomorphism of fundamental triplets (see Definition \ref{def-morphism-triplets}).

This proves that the map $\tau$ is injective.

 \end{proof}

\vskip 20pt

\section{ ${\go {sl}}_{2}$-triples}


\vskip 5pt
In this section we will suppose that the pair  $(\go{g}_{0}, \rho)$ satisfies the following assumption. 

 \vskip 5pt 
 
{ \bf Assumption  (H)}:
 
 a) The Lie algebra ${\go g}_{0}$ is reductive with a one dimensional center: ${\go g}_{0}=Z({\go g}_{0})\oplus {\go g}_{0}'$ where ${\go g}_{0}'=[{\go g}_{0},{\go g}_{0}]$ and $\dim Z({\go g}_{0})=1$.  
 
 b) We suppose also that       $Z({\go g}_{0})$ acts by a non trivial character (i.e. $\rho(Z(\go{g}_{0}))=\C Id_{V}$).  
 
 \vskip 10pt 
 
 Then there exists $H_{0}\in Z({\go g}_{0})$ such that $\rho(H_{0})=2{\rm Id}_{{V}}$ (and $\rho^*(H_{0})=-2{\rm Id}_{{V^*}}$).
 
Recall also that in a Lie algebra, or in a local Lie algebra, a triple of elements $(y,h,x)$ is called an ${\go {sl}}_{2}$-triple if $[h,x]=2x$, $[h,y]=-2y$ and $[y,x]=h$.

\subsection{Associated ${\go {sl}}_{2}$-triple}

\begin{definition}\label{def-sl2-associe}   We say that the local Lie algebra $\Gamma(\go{g}_{0},B_{0}, \rho)$, or the graded Lie algebra $\go{g}_{min}(\Gamma(\go{g}_{0},B_{0}, \rho))$,  is  associated to an ${\go {sl}}_{2}$-triple if there exists $X\in V $, $Y\in V^* $  such that $(Y,H_{0},X)$ is an ${\go {sl}}_{2}$-triple.
\end{definition}

\begin{theorem} \label{th-CNS-sl2}\hfill

 The local Lie algebra $\Gamma(\go{g}_{0},B_{0}, \rho)$ is  associated  to an ${\go {sl}}_{2}$-triple if and only if there exists $X\in V$ such that $X\notin {\go g}_{0}'.X$ where ${\go g}_{0}'.X=\{U.X,\, U\in {\go g}_{0}'\}$. The set $\{X\in V,\,X\notin {\go g}_{0}'.X\}$ is exactly the set of elements in $V$ which belong to an associated ${\go {sl}}_{2}$-triple.
\end{theorem}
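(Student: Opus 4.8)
The plan is to reduce the whole statement to one linear-algebra condition on a functional $Y\in V^*$. First I would note that under assumption \textbf{(H)} the relations $[H_{0},X]=2X$ and $[H_{0},Y]=-2Y$ hold automatically for \emph{every} $X\in V$ and $Y\in V^*$, since $[H_{0},X]=\rho(H_{0})X=2X$ and $[H_{0},Y]=\rho^*(H_{0})Y=-2Y$. Hence, for fixed $X\in V$ and $Y\in V^*$, the triple $(Y,H_{0},X)$ is an $\go{sl}_{2}$-triple if and only if the single relation $[Y,X]=H_{0}$, equivalently $[X,Y]=-H_{0}$, holds.

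Next I would exploit the decomposition $\go{g}_{0}=\C H_{0}\oplus\go{g}_{0}'$ (here $\C H_{0}=Z(\go{g}_{0})$ since $\dim Z(\go{g}_{0})=1$ and $H_{0}\neq0$). Since $\go{g}_{0}'=[\go{g}_{0},\go{g}_{0}]$ is semisimple it coincides with $[\go{g}_{0}',\go{g}_{0}']$, so writing any $U'\in\go{g}_{0}'$ as a sum of brackets $[A,B]$ with $A,B\in\go{g}_{0}'$ and using the invariance of $B_{0}$ together with the centrality of $H_{0}$ gives $B_{0}(H_{0},[A,B])=B_{0}([H_{0},A],B)=0$; thus $B_{0}(H_{0},\go{g}_{0}')=0$. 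As $B_{0}$ is nondegenerate and $H_{0}\neq0$ (because $\rho(H_{0})=2\,\text{Id}_{V}$ with $V\neq\{0\}$), this forces $B_{0}(H_{0},H_{0})\neq0$; put $c=-\tfrac12 B_{0}(H_{0},H_{0})\in\C^*$. By the defining property of the bracket from Theorem \ref{th-algebre-locale}, $[Y,X]$ is the unique element of $\go{g}_{0}$ satisfying $B_{0}([Y,X],U)=-Y(U.X)$ for all $U\in\go{g}_{0}$; since $B_{0}$ is nondegenerate, $[Y,X]=H_{0}$ is equivalent to $B_{0}(H_{0},U)=-Y(U.X)$ for all $U\in\go{g}_{0}$. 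Testing this against $U=H_{0}$ and against $U\in\go{g}_{0}'$ separately, and using $B_{0}(H_{0},\go{g}_{0}')=0$, I obtain that $[Y,X]=H_{0}$ is equivalent to the conjunction: $Y(X)=c$ and $Y(U'.X)=0$ for all $U'\in\go{g}_{0}'$, that is, $Y$ vanishes on the subspace $\go{g}_{0}'.X$.

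Finally I would conclude. A functional $Y\in V^*$ that vanishes on $\go{g}_{0}'.X$ and satisfies $Y(X)=c\neq0$ exists if and only if $X\notin\go{g}_{0}'.X$: if $X\in\go{g}_{0}'.X$, every functional killing $\go{g}_{0}'.X$ also kills $X$, so $Y(X)=0\neq c$, impossible; if $X\notin\go{g}_{0}'.X$, one defines $Y$ to be $0$ on $\go{g}_{0}'.X$ and $c$ at $X$ and extends linearly to $V$. Such a $Y$ is automatically nonzero (as $Y(X)\neq0$), and $X$ itself is nonzero because $0\in\go{g}_{0}'.X$. This establishes at once both claims: an element $X\in V$ lies in an associated $\go{sl}_{2}$-triple if and only if $X\notin\go{g}_{0}'.X$, and therefore $\Gamma(\go{g}_{0},B_{0},\rho)$ is associated to an $\go{sl}_{2}$-triple if and only if there exists $X\in V$ with $X\notin\go{g}_{0}'.X$.

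There is no serious obstacle here; the only points needing a little care are the orthogonality $B_{0}(H_{0},\go{g}_{0}')=0$ and the resulting nonvanishing of $B_{0}(H_{0},H_{0})$ — this is exactly what makes the target value $c$ nonzero, hence guarantees that the functional $Y$ produced is nontrivial — together with the bookkeeping of signs from anticommutativity of the bracket and from the contragredient action. The rest is just translating the $\go{sl}_{2}$-relation through the explicit formula for $[X,Y]$.
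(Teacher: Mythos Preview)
Your proof is correct and follows essentially the same approach as the paper's: both reduce the $\go{sl}_{2}$-condition to $[Y,X]=H_{0}$, use the orthogonality $B_{0}(H_{0},\go{g}_{0}')=0$ and the nonvanishing of $B_{0}(H_{0},H_{0})$ to translate this via the defining relation of the bracket into the pair of conditions $Y(\go{g}_{0}'.X)=0$ and $Y(X)\neq0$, and then invoke the obvious linear-algebra fact. The paper phrases things through the extended form $B$ of Lemma~\ref{lemma-forme-locale} and rescales $Y$ at the end, whereas you work directly with $B_{0}$ and the duality pairing and fix the value $Y(X)=c$ from the start; these are cosmetic differences.
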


\begin{proof}\hfill

Recall from Lemma \ref{lemma-forme-locale} that the form $B_{0}$ extends to an invariant form $B$ on $\Gamma(\go{g}_{0},B_{0}, \rho)$.

Suppose that $\Gamma(\go{g}_{0},B_{0}, \rho)$ has an associated ${\go {sl}}_{2}$-triple $(Y, H_{0},X)$. Then $Y({\go g}_{0}'.X)= B(Y, {\go g}_{0}'.X)=B(Y, [{\go g}_{0}',X])=B([Y,X],{\go g}_{0}')=B(H_{0},{\go g}_{0}')=\{0\}$. Hence the form $Y$ is zero on ${\go g}_{0}'.X$. On the other hand $B(Y,X)=Y(X)=\frac{1}{2}B(Y,[H_{0},X])=-\frac{1}{2}B([Y,X],H_{0})=-\frac{1}{2}B(H_{0},H_{0})=-\frac{1}{2}B_{0}(H_{0},H_{0})\neq 0$. Therefore $X\notin {\go g}_{0}'.X$.

Conversely suppose that $X\notin {\go g}_{0}'.X$. We choose $Y\in V^*$ such that   $Y({\go g}_{0}'.X)=\{0\}$ and $Y(X)\neq 0$. Then $B([Y,X],{\go g}_{0}')=B(Y,{\go g}_{0}'.X)=Y({\go g}_{0}'.X)=\{0\}$. Therefore $[Y,X]\in ({\go g}_{0}')^{\perp}=\C H_{0}$. Set $[Y,X]=\lambda H_{0}$. We have also $Y(X)=B(Y,X)=\frac{1}{2}B(Y,[H_{0},X])=-\frac{1}{2}B([Y,X],H_{0})=-\frac{1}{2}B(\lambda H_{0},H_{0})=-\frac{1}{2}\lambda B( H_{0},H_{0})\neq 0$. Hence $\lambda\neq 0$. Define $\widetilde Y=\frac{1}{\lambda}Y$. Then $(\widetilde Y, H_{0},X)$ is an ${\go {sl}}_{2}$-triple.

\end{proof}

\begin{rem} \label{rem-sl2-ind-forme}\hfill

a$)$  The existence of a  ${\go {sl}}_{2}$-triple associated to the local Lie algebra $\Gamma(\go{g}_{0},B_{0}, \rho)$ does not depend on the invariant form $B_{0}$ on ${\go g}_{0}$, but only on the representation $(\go{g}_{0}',\rho_{|_{\go{g}_{0}'}},V)$.

b)  Suppose that the  module   $(\rho,V)$ satisfies  the following property:
 $${\bf (P)}\hskip 5pt  \text{ There exists } X\in V \text { such that } X\notin {\go g}_{0}'.X.$$
  Then the dual module $(\rho^*, V^*)$ satisfies the same property. This is due to the fact that one can always suppose that the center acts non trivially, and then observe that the existence of an associate $\go{sl}_{2}$-triple is a symmetric condition.

\end{rem}




\vskip 5pt

\subsection{Property {\bf (P)}, relative invariants and ${\go {sl}}_{2}$-triples}\hfill 
\vskip 5pt


\begin{definition} 
  Let $(G_{0},\pi, V)$ be a finite dimensional representation of a connected complex algebraic group $G_{0}$.  Let $X\in V$ and consider its $G_{0}$-orbit ${\cal O}_{X}=G_{0}.X$.  Then ${\cal O}_{X}$ is open in its closure $\overline{ {\cal O}_{X}}$ which is an irreducible affine variety. Let $R$ be an element of the field $\C(\overline{ {\cal O}_{X}}) $ of rational functions on   ${\cal O}_{X}$. The function $R$ is called a relative invariant on ${\cal O}_{X}$ if there exists   a rational character $\chi$ of $G_{0}$ such that
  $$\forall x \in {\cal O}_{X},   \,\,\,R(\pi(g)x)=\chi(g)R(x) \eqno( 4-2-1) $$
  \end{definition}
  \vskip 10pt
  
  In the sequel we will  suppose that the representation $(\rho,V)$ of $\go{g}_{0}$ lifts to a representation $\pi$ of a connected algebraic  group $G_{0}$ whose Lie algebra is $\go{g}_{0}$ (in other words $\rho$ is the derived representation $d\pi$).

  For convenience we will often write $g.x$ ($g\in G_{0}, x\in V$) instead of $\pi(g)x$ and also $A.x$ ($A\in \go{g}_{0}, x\in V$) instead of $\rho(A)x$.
  \vskip 5pt
  
 We will now prove   the existence of an associated ${\go {sl}}_{2}$-triple $(X,H_{0},Y)$   is equivalent to the existence of a non-trivial  relative invariant on the $G_{0}$-orbit of $X$. 
 \vskip 5pt

 It will be easier to specialize the property {\bf (P)} at a point $x$. For $x\in V$, the property ${\bf (P)}_{x}$ is defined as follows:
 
$${\bf (P)}_{x}: x\notin {\go g}_{0}'.x$$

   \vskip 10pt
   
   \begin{lemma}\label{lem-P}  \footnote{This lemma was first communicated to me by Michel Brion, the  proof given here is due to the referee.} \hfill
  
  Let $V$ be a representation of an algebraic group $H$, and let $\go{h}=\text{Lie}(H)$. Then for $x\in V$, the orbit $H.x$ is a cone if and only if $x\in \go{h}.x$.   
 \end{lemma} 
 
 \begin{proof}\hfill

   Set $N_{ x }= \{h\in H, h.x\in  \C x\}$, $C_{x}=\{h\in H, h.x=x\}$, $\go{n}_{x}=\text{Lie}(N_{x})$ and $\go{c}_{x}=\text{Lie}(C_{x})$. Considering the orbital map $h\longmapsto h.x$ from $N_{x}$ to $\C x$ we obtain
   $$\C^*x\subset H.x\Leftrightarrow \dim N_{x}/C_{x}=1\Leftrightarrow  \dim \go{n}_{x}/\go{c}_{x}=1\Leftrightarrow \C x\subset \go{h}.x$$ 
 
 \end{proof}

 \begin{theorem}\label{th-equivalence-invariants-(P)-sl2}\hfill
 
  
  Let $G_{0}'= [G_{0},G_{0}]$ be the commutator subgroup of $G_{0}$. Let $X\in V$. The following three conditions are equivalent:
  
  $1)$ ${\bf (P)}_{X}: X \notin \go{g}_{0}'.X$.
  
  $2)$  The orbit $G_{0}'.X$ is non conical.
  
  $3)$ There exists a non trivial relative invariant on the $G_{0}$-orbit ${\cal O}_{X}=G_{0}.X$.
  
  $4)$  $X$ belongs to an associated $\go{sl}_{2}$-triple $(X, H_{0}, Y)\in \go{g}_{min}(\Gamma(\go{g}_{0},B_{0}, \rho))$.
  
  \end{theorem}

  \begin{proof}\hfill
  
    The equivalence of $1)$ and $2)$ is Lemma \ref{lem-P}.
  The equivalence of $1)$ and $4)$ has already been proved in Theorem \ref{th-CNS-sl2}.  

 Assume that condition $3)$ holds. Let $R$ a non trivial relative invariant on ${\cal O}_{X}$. Let $t_{1}=R(X)$ and $t_{2}$ be two distinct values taken by $R$. Define  ${\cal O}_{i}=\{x\in {\cal O}_{X}\,|\, R(x)=t_{i}\}$ for  $i=1,2$. Then  ${\cal O}_{1}$ and ${\cal O}_{2}$ are two $G_{0}'$-stable subsets of ${\cal O}_{X}$ such that ${\cal O}_{1}\cap {\cal O}_{2}=\emptyset$. Hence  $G_{0}'.X\neq G_{0}.X$. From Lemma \ref{lem-P}  we obtain that $3) \Longrightarrow 2)$.

 Conversely, let us assume that ${\bf (P)}_{X}$ holds. Then from Lemma \ref{lem-P} we know that the $G_{0}$-orbit ${\cal O}_{X}$ splits into several $G_{0}'$-orbits. 
 
 
 



 Suppose that one of these, say $G_{0}'.v$, is open in ${\cal O}_{X}$.  Denote by $G_{0_{v}}$ be the stabilizer of $v$ in $G_{0}$ and by $G_{0_{v}}'=G_{0_{v}}\cap G_{0}'$ the stabilizer of $v$ in $G_{0}'$. Then the subgroup $G_{0}'.G_{0_{v}}$ is open in $G_{0}$ as the inverse image of $G_{0}'.v$ under  the orbital map $g\longmapsto g.v$. As $G_{0}$ is connected we have $G_{0}=G_{0}'.G_{0_{v}}$. Then ${\cal O}_{X}=G_{0}.v=G_{0}'.G_{0_{v}}.v=G_{0}'.v$ and this is not possible as we should have several $G_{0}'$-orbits in ${\cal O}_{X}$. Hence if we assume that ${\bf (P)}_{X}$ holds,  there is no open $G_{0}'$-orbits in ${\cal O}_{X}$. In particular $\dim {\cal O}_{X}-\dim G_{0}'.X > 0$. Therefore
$$\begin{array}{rl}
\dim {\cal O}_{X}-\dim G_{0}'.X&= \dim G_{0}-\dim G_{0_{X}}-\dim G_{0}'.X\\
{}&= \dim G_{0} -\dim G_{0_{X}}-\dim G_{0}'+ \dim {G_{0_{X}}'}\\
{}&= \dim (G_{0}/G_{0}')-\dim (G_{0_{X}} /{G_{0_{X}}'})\\
{}&=1-\dim (G_{0_{X}} /{G_{0_{X}}'})>0
 
\end{array}$$

This implies that $\dim (G_{0_{X}} /{G_{0_{X}}'})=0$ (hence at the Lie algebra level we have $\go{g}_{0_{X}}=\go{g}_{0_{X}}'$). Then $\det (\pi(G_{0_{X}}))$  is a finite group. Therefore there exists $p\in \N^*$ such that $\det^p({\pi(G_{0_{X}})})=1$, but from assumption ${\bf (H)}$ we know that the character $g\longmapsto \det^p(\pi(g))$ is non trivial. Then the function $R: {\cal O}_{X}\longrightarrow \C$ defined by
$$\forall g\in G_{0}\,\,\,,R(\pi(g)X)={\det}^p(\pi(g))$$
is a non-trivial relative invariant on ${\cal O}_{X}$, and this is condition $3)$.

\end{proof}





\begin{rem} 

Let us recall that a prehomogeneous vector space $(G,V)$ is said to be regular if there exists a relative invariant $P$ such that the corresponding Hessian matrix $H_{P}$ is generically non degenerate (for the general theory of prehomogeneous vector space due to M. Sato see \cite{Sa-Ki} or \cite{Rubenthaler-book}). It is  known that for  irreducible prehomogeneous vector spaces of parabolic type the regularity is equivalent to the existence of a non-trivial relative invariant on the open orbit and also  to  the existence of an associated ${\go {sl}}_{2}$-triple    (see \cite{Rubenthaler-book} (Corollaire 4.3.3. p.134) or \cite{Rubenthaler-note-PV} (Corollaire 1)). But in the preceding Theorem the representation does not need to be irreducible and there is no assumption of prehomogeneity. It works for any  representation of $\go{g}_{0}'$.

 \end{rem}
 \vskip 5pt

\begin{cor}\label{cor-P-PHparaboliques}\hfill 

Consider the particular case where $\go{g}=\displaystyle\oplus _{i=-n}^{i=n}\go{g}_{i}$ is   a  grading of a semi-simple $($finite dimensional$)$ Lie algebra  $\go{g}$ $($see example \ref{ex-PH-paraboliques}$)$ such that the representation $(\go{g}_{0}, \go{g}_{1})$ is irreducible. Then  the representation $(G_{0}, \go{g}_{1})$ is   prehomogeneous.  Denote by $\Omega$ its  open orbit. Let $x\in \go{g}_{1}$. 

If the prehomogeneous vector space is regular, then
$$x\in \go{g}_{0}'.x=[\go{g}_{0}',x] \Longleftrightarrow  x\notin \Omega \eqno{(*)}$$
If the prehomogeneous vector space is not regular, then for each $x\in \go{g}_{1}$, we have $x\in [\go{g}_{0}',x]$. Moreover except for the open orbit in the regular case, all $G_{0}$-orbits are $G_{0}'$-orbits.
\end{cor}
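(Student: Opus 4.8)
The plan is to deduce the corollary from Theorem \ref{th-equivalence-invariants-(P)-sl2} together with the standard dictionary between regularity of a prehomogeneous vector space (PVS) and the existence of relative invariants on the open orbit. First I would set $G_0$ to be a connected reductive group with Lie algebra $\go{g}_0$ on which the representation $(\go{g}_0,\go{g}_1)$ lifts; since $\go{g}$ is semi-simple with the grading induced by a grading element $H_0\in\go{g}_0$ (as recalled in Remark \ref{rem-graduations-qq-ss} and Example \ref{ex-PH-paraboliques}), the center of $\go{g}_0$ contains $H_0$ acting by a nonzero scalar on $\go{g}_1$, so Assumption \textbf{(H)} is satisfied after rescaling $H_0$ so that $\rho(H_0)=2\,\mathrm{Id}$; take $B_0$ to be the restriction of the Killing form. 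Then by Example \ref{ex-PH-paraboliques} we have $\go{g}=\go{g}_{min}(\Gamma(\go{g}_0,B_0,\go{g}_1))$, so Theorem \ref{th-equivalence-invariants-(P)-sl2} applies verbatim to points $x\in\go{g}_1=V$.

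Next, for the regular case: it is classical (Sato--Kimura) that an irreducible PVS $(G_0,\go{g}_1)$ with open orbit $\Omega$ is regular if and only if it carries a nonzero relative invariant $f$, and that for such $f$ the complement $\go{g}_1\setminus\Omega$ is exactly the zero locus of $f$; in particular $x\in\Omega$ iff $f(x)\neq 0$, and a relative invariant restricted to a non-open orbit $G_0.x$ with $x\notin\Omega$ is identically zero, hence trivial there, while on $\Omega$ any relative invariant of $\go{g}_1$ restricts to a non-trivial relative invariant on the orbit $\mathcal O_x=\Omega$. So condition $2)$ of Theorem \ref{th-equivalence-invariants-(P)-sl2} holds for $x$ precisely when $x\in\Omega$. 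Combining with the equivalence $1)\Leftrightarrow 2)$ of that theorem gives
$$x\in\go{g}_0'.x=[\go{g}_0',x]\iff \mathrm{non}\,{\bf (P)}_x\iff x\notin\Omega,$$
which is $(*)$. For the non-regular case, the PVS has no non-trivial relative invariant on $\mathcal O_x$ for any $x$ (if there were one on the open orbit it would extend, contradicting non-regularity for irreducible PVS; for non-open orbits one argues the orbit closure is still swept out by scalar multiplication using that $H_0\in Z(\go{g}_0)$ acts by scalars and $\overline{\mathcal O_x}$ is a cone — here I would invoke Lemma \ref{lem-P}), so condition $2)$ always fails, hence $\mathrm{non}\,{\bf (P)}_x$ holds for every $x\in\go{g}_1$, i.e. $x\in[\go{g}_0',x]$ for all $x$. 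The last sentence ("all $G_0$-orbits are $G_0'$-orbits, except the open one in the regular case") then follows from Lemma \ref{lem-P}, equivalence $(**)$: $\mathrm{non}\,{\bf (P)}_x\iff G_0'.x=G_0.x$; since $\mathrm{non}\,{\bf (P)}_x$ holds for all $x$ in the non-regular case and for all $x\notin\Omega$ in the regular case, these are exactly the orbits that coincide, and $\Omega$ splits into several $G_0'$-orbits precisely when ${\bf (P)}_x$ holds on it.

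The main obstacle I anticipate is the careful handling of relative invariants on a \emph{non-open} orbit in the non-regular case: Theorem \ref{th-equivalence-invariants-(P)-sl2} is stated for an arbitrary orbit $\mathcal O_X$, so to conclude ${\bf (P)}_x$ fails I must rule out a non-trivial relative invariant on $G_0.x$ for \emph{every} $x$, not just on the open orbit. The clean way is not to argue about invariants directly but to use the geometric reformulation: by Lemma \ref{lem-P}, $\mathrm{non}\,{\bf (P)}_x\iff G_0'.x = G_0.x\iff G_0'.x$ is stable under scalars. For an irreducible non-regular PVS one knows (again Sato--Kimura, via the fact that the generic isotropy in $G_0/G_0'$ is positive-dimensional, exactly the dimension count carried out inside the proof of Theorem \ref{th-equivalence-invariants-(P)-sl2}) that $\dim(G_{0_x}/G_{0_x}')=1$ for $x$ in the open orbit, forcing $G_0'.x=G_0.x$ there, and for non-generic $x$ one runs the same stabilizer-dimension inequality, noting it can only decrease; so $\mathrm{non}\,{\bf (P)}_x$ propagates to all orbits. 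I would present this as the substantive step and treat everything else as bookkeeping around Theorem \ref{th-equivalence-invariants-(P)-sl2} and Lemma \ref{lem-P}.
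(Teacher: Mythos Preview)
Your overall strategy---reduce to Theorem \ref{th-equivalence-invariants-(P)-sl2} and Lemma \ref{lem-P}---is the same as the paper's, but you pivot on a different leg of the equivalence in Theorem \ref{th-equivalence-invariants-(P)-sl2}: you use $1)\Leftrightarrow 2)$ (relative invariants), whereas the paper uses $1)\Leftrightarrow 3)$ (associated $\go{sl}_2$-triples) and then invokes an external structural result (\cite{Rubenthaler-book}, Corollaire 4.3.3, or \cite{Rubenthaler-note-PV}) stating that in the parabolic setting an element of $\go{g}_1$ lies in an associated $\go{sl}_2$-triple \emph{if and only if} it belongs to the open orbit of a regular PVS. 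That single citation handles both the regular and the non-regular case, for open and non-open orbits simultaneously, and the rest is exactly your bookkeeping via Lemma \ref{lem-P}.

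The gap in your route is precisely the one you flag, and it is not closed by the argument you sketch. To conclude $\mathrm{non}\,{\bf (P)}_x$ for $x$ outside the open orbit (in either the regular or the non-regular case) via condition $2)$, you must rule out \emph{every} rational relative invariant on $\overline{\mathcal O_x}$, not merely observe that the fundamental invariant of $V$ restricts to zero there; the definition in the paper allows $R\in\C(\overline{\mathcal O_x})$, so vanishing of the global $f$ on a smaller orbit says nothing. Your proposed fallback---a semicontinuity of $\dim(G_{0_x}/G'_{0_x})$ as $x$ degenerates---is not justified: this quantity takes values in $\{0,1\}$ and there is no a priori monotonicity along orbit closures. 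In short, the relative-invariant route needs exactly the kind of orbit-by-orbit input that the cited $\go{sl}_2$-triple classification supplies; absent an independent argument controlling invariants on lower orbits, you should switch to condition $3)$ and quote the same external result the paper does.
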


\begin{proof}\hfill

Suppose that $x\in \go{g}_{0}'.x=[\go{g}_{0}',x]$. This means that property $\text{\rm  non}{\bf (P)}_{x}$ holds. According to Theorem \ref{th-equivalence-invariants-(P)-sl2}, this is equivalent to the fact that $x$ does not belong to an associated $\go{sl}_{2}$-triple. From \cite{Rubenthaler-book} (Corollaire 4.3.3 p. 134), or from\cite{Rubenthaler-note-PV}, the only elements which belong to an associated $\go{sl}_{2}$-triple are those in the open orbit in the regular case. This proves the two first assertions. If  $x\in \go{g}_{1}$ is not an element of the open orbit in the regular case, then $x$ does not belong to  an associated $\go{sl}_{2}$-triple, and therefore $x\in  \go{g}_{0}'.x=[\go{g}_{0}',x] $ (Theorem \ref{th-equivalence-invariants-(P)-sl2}). Then, from Lemma \ref{lem-P}, we obtain $G_{0}'.x=G_{0}.x$.  
\end{proof}

\begin{rem}  The equivalence $(*)$ in Corollary \ref{cor-P-PHparaboliques} was first proved in the the particular case of so-called "Heisenberg gradings" of a simple Lie-algebra (over any field)  by M. Slupinski and R. Stanton  (see \cite{Slupinski-Stanton}, Lemma 3.3 p. 164). These gradings are special gradings of length 5.  It is  worth noticing that it is also possible to prove $(*)$ by combining  results of V. Kac \cite{Kac2} (more specially Lemma 1.1 p.193) and of the author (\cite{Rubenthaler-book} or \cite{Rubenthaler-note-PV}).
\end{rem}
\vskip 15pt

Let $\go{g}=\oplus_{i=-n}^n\go{g}_{i}$ be the grading corresponding to an irreducible prehomogeneous vector space of parabolic type. It is easy to see that if there exists an associated $\go{sl}_{2}$-triple, then for $x$ belonging to the open $G_{0}$-orbit $\Omega\subset \go{g}_{1}$, the map $x\longmapsto y$ (such that $(y,H_{0},x)$ is a $\go{sl}_{2}$-triple) is  $G_{0}$-equivariant from $\Omega$ to $\go{g}_{-1}\simeq \go{g}_{1}^*$. It is also known (see \cite{Rubenthaler-book} Th. 4.3.2 p.132, or \cite{Rubenthaler-note-PV}) that if $R$ is a non trivial relative invariant on $\go{g}_{1}$, then there exists a constant $c\neq0$ such that $(c \displaystyle\frac{dR(x)}{R(x)}, H_{0},x)$ is an associated  $\go{sl}_{2}$-triple. The purpose of the next two propositions is to generalize these facts.

\vskip 15pt

Let $X\in \go{g}_{1}=V$ satisfying condition $({\bf P})_{X}: X\notin \go{g}_{0}'.X$. Then any element $x$ belonging to the orbit ${\cal O}_{X}=G_{0}.X$ satisfies condition $({\bf P})_{x}$ and therefore belongs to an associated $\go{sl}_{2}$-triple. Let $T_{x}=\go{g}_{0}.x=[\go{g}_{0},x]$ denote the tangent space at $x$ to ${\cal O}_{X}$. Using the canonical isomorphism $T_{x}^*\simeq V^*/ (T_{x})^{\perp}$, any element of $T_{x}^*$ can be considered as a class  modulo $(T_{x})^{\perp}$ in $V^*=\go{g}_{-1}$.

\begin{prop}\label{prop-y=section-cotangent}  \hfill

Define, for $x\in {\cal O}_{X}$:
$$\varphi(x)=\{y\in \go{g}_{-1},\, (y,H_{0},x) \text{ is a } \go{sl}_{2}\text{-triple}\}.$$
Then $\varphi(x)$ is a class modulo $(T_{x})^{\perp}$ in $V^*=\go{g}_{-1}$. Hence the map $$x\longmapsto \varphi(x) \in   V^*/ (T_{x})^{\perp}\simeq T_{x}^*$$
is a section of the cotangent bundle $T^*({\cal O}_{X})$.

Moreover this  section is equivariant:
$$\forall g\in G_{0},\,\, \varphi(\pi(g).x)=\pi^*(g)\varphi(x).$$

\end{prop}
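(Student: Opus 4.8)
The plan is to first establish assertion $a)$ by analysing the set $\varphi(x)$ directly. Fix $x\in{\cal O}_{X}$. By Theorem \ref{th-equivalence-invariants-(P)-sl2} the point $x$ satisfies $({\bf P})_{x}$, so by Theorem \ref{th-CNS-sl2} the set $\varphi(x)$ is non-empty. Pick $y_{0}\in\varphi(x)$. If $y$ is another element, then $(y,H_{0},x)$ is an $\go{sl}_{2}$-triple precisely when $[y,x]=H_{0}$ and $[H_{0},y]=-2y$; the second condition is automatic since $H_{0}$ acts as $-2\,\mathrm{Id}$ on $V^{*}$ under assumption {\bf (H)}. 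So $\varphi(x)=\{y\in V^{*}\mid [y,x]=H_{0}\}$, and this is a coset of the subspace $\{y\in V^{*}\mid [y,x]=0\}$. The first key step is to identify this subspace with $(T_{x})^{\perp}$, where $T_{x}=\go{g}_{0}.x$ and the orthogonal is taken with respect to the pairing between $V=\go{g}_{1}$ and $V^{*}=\go{g}_{-1}$. Using the extended form $B$ of Lemma \ref{lemma-forme-locale} and Proposition \ref{prop-forme-globale}, for $y\in V^{*}$ and $U\in\go{g}_{0}$ we have $B([y,x],U)=B(y,[x,U])=-y(U.x)$; hence $[y,x]=0$ in $\go{g}_{0}$ if and only if $y(U.x)=0$ for all $U\in\go{g}_{0}$, i.e. $y(T_{x})=0$, i.e. $y\in(T_{x})^{\perp}$. (Here one uses that $B_{0}$ is non-degenerate on $\go{g}_{0}$ to pass from "$B([y,x],U)=0$ for all $U$" to "$[y,x]=0$".) Therefore $\varphi(x)=y_{0}+(T_{x})^{\perp}$ is a single class modulo $(T_{x})^{\perp}$, and via the canonical isomorphism $T_{x}^{*}\simeq V^{*}/(T_{x})^{\perp}$ the assignment $x\mapsto\varphi(x)$ defines a section of $T^{*}({\cal O}_{X})$. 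This proves $a)$.

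For assertion $b)$, the plan is to exploit $G_{0}$-equivariance of all the structures involved. The action of $g\in G_{0}$ on $\go{g}_{min}(\Gamma(\go{g}_{0},B_{0},\rho))$ integrates the adjoint action of $\go{g}_{0}$ (it is enough to define it on the local part, where it is $\pi$ on $V$, $\pi^{*}$ on $V^{*}$, and $\mathrm{Ad}$ on $\go{g}_{0}$, and then extend; compatibility with the brackets follows because $\mathrm{Ad}(g)$ preserves $B_{0}$ and intertwines $\pi$ and $\rho$). Since $H_{0}$ lies in the center of $\go{g}_{0}$, it is fixed: $\mathrm{Ad}(g)H_{0}=H_{0}$. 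Now if $(y,H_{0},x)$ is an $\go{sl}_{2}$-triple, then applying the automorphism "action of $g$" gives $[\pi^{*}(g)y,\pi(g)x]=\mathrm{Ad}(g)[y,x]=\mathrm{Ad}(g)H_{0}=H_{0}$ and $[H_{0},\pi^{*}(g)y]=-2\pi^{*}(g)y$, so $(\pi^{*}(g)y,H_{0},\pi(g)x)$ is again an $\go{sl}_{2}$-triple. This shows $\pi^{*}(g)\varphi(x)\subseteq\varphi(\pi(g)x)$; applying the same with $g^{-1}$ gives the reverse inclusion, so $\varphi(\pi(g)x)=\pi^{*}(g)\varphi(x)$. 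One should also check that this equality is compatible with the identification $\varphi(\pi(g)x)\in T_{\pi(g)x}^{*}$ and the action of $g$ on cotangent vectors, which is routine since $T_{\pi(g)x}=\pi(g)T_{x}$ and $(T_{\pi(g)x})^{\perp}=\pi^{*}(g)(T_{x})^{\perp}$.

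The only genuinely delicate point is making sure the "action of $g\in G_{0}$ on $\go{g}_{min}$" is well-defined as an automorphism of graded Lie algebras: one needs that the representation $\rho$ lifts to $G_{0}$ and that $\mathrm{Ad}$ on $\go{g}_{0}$ is compatible with the bracket $[X,Y]$ of Theorem \ref{th-algebre-locale}, which reduces to the identity $B_{0}(\mathrm{Ad}(g)[X,Y],\mathrm{Ad}(g)U)=\pi^{*}(g)Y\big((\mathrm{Ad}(g)U).\pi(g)X\big)$; this is exactly the invariance of $B_{0}$ under $\mathrm{Ad}(g)$ combined with $\pi(g)\circ\rho(U)=\rho(\mathrm{Ad}(g)U)\circ\pi(g)$, which holds because $\pi$ integrates $\rho$. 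Once this is in place, everything else is formal. I expect the main obstacle to be purely bookkeeping: keeping straight the several identifications $T_{x}^{*}\simeq V^{*}/(T_{x})^{\perp}$ and $\varphi(x)$ as a coset versus as a cotangent vector, and verifying that the equivariance in $b)$ is stated with respect to the correct ($\pi^{*}$) action on the cotangent bundle.
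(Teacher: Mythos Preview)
Your proof is correct and follows essentially the same approach as the paper. The only minor difference is in part $b)$: the paper verifies $[\pi^*(g)y,\pi(g)x]=H_{0}$ by a direct computation with $B_{0}$ (namely $B_{0}([\pi^*(g)y,\pi(g)x],u)=-y((\Ad g^{-1}u).x)=B_{0}(H_{0},u)$ for all $u$), whereas you package this as the statement that $(\pi^{*}(g),\Ad(g),\pi(g))$ is an automorphism of the local Lie algebra; and the paper obtains the reverse inclusion from the fact that both sides are cosets of $(T_{\pi(g)x})^{\perp}=\pi^{*}(g)(T_{x})^{\perp}$, while you use $g^{-1}$ --- but these are cosmetic variations of the same argument.
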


\begin{proof}\hfill

 Let $y_{0}\in \varphi(x)$ and let $z\in (T_{x})^{\perp}$. Then for $u\in \go{g}_{0}$, we have $B_{0}([z,x],u)=-z([u,x])=0$, as $[u,x]\in T_{x}$. Hence $y_{0}+(T_{x})^{\perp}\subset \varphi(x)$.

Conversely let $y\in \varphi(x)$. Then  $[y-y_{0},x]=0$, and therefore $B_{0}([y-y_{0}, x], u)=-(y-y_{0})([u,x])=0$ for all $u\in \go{g}_{0}$. Hence $y\in y_{0}+(T_{x})^{\perp}$. 

 Suppose that $(y,H_{0},x)$ is a  $\go{sl}_{2}$-triple. Then for $u\in \go{g}_{0}$:
$$\begin{array}{rl}
B_{0}([\pi^*(g)y,\pi(g)x],u)&=-\pi^*(g)y(u.\pi(g)x)\\
= -y(\pi(g^{-1})(u.\pi(g)x)) &=-y((\Ad g^{-1}u).x)\\
=B_{0}([y,x],\Ad g^{-1}u) &=B_{0}(H_{0},u).
\end{array}$$
Hence $[\pi^*(g)y,\pi(g)x]=H_{0}$. Therefore $\pi^*(g)y\in \varphi(\pi(g)x)$ or equivalently $\pi^*(g)\varphi(x)\subset \varphi(\pi(g)x)$ . As  $\pi^*(g) (T_{x})^{\perp}=(T_{\pi(g)x)})^{\perp}$ and $\varphi(\pi(g)x)$ is a  class modulo $(T_{x})^{\perp}$, we obtain $\pi^*(g)\varphi(x)=\varphi(\pi(g).x) $.

\end{proof}

\vskip 30pt

Let $X\in V={\go g}_{1}$ which belongs to an associated $\go{sl}_{2}$-triple. Then  the orbit  $G_{0.}X={\cal O}_{X}$ has a non trivial relative invariant $R$ by Theorem \ref{th-equivalence-invariants-(P)-sl2}. The next proposition shows how one can built an associated ${\go {sl}}_{2}$-triple containing $x\in {\cal O}_{X}$ from the knowledge of $R$. 

    For this we will now consider the  " logarithmic differential" (or "gradlog") of $R$ given by $\varphi_{R}(x)=\displaystyle \frac{dR(x)}{R(x)}\in T_{x}^*$  as a class  in $V^*/ (T_{x})^{\perp}$. In particular  $ \varphi_{R}(x)= \displaystyle\frac{dR(x)}{R(x)}$ is a subset of $V^*$.

 \begin{prop}\label{prop-invariant-sl2}\hfill
 
   Let $(\rho,V)$ be a finite dimensional representation of a semi-simple Lie algebra $\go{g}_{0}'$. Extend this representation to a fundamental triplet  $(\go{g}_{0}, B_{0}, \rho)$ satisfying assumption {\bf (H)}, and let $G_{0} $ be a connected reductive group whose Lie algebra is $\go{g}_{0}$,   on which the representation $\rho$ lifts. 
  
  Suppose that the orbit ${\cal O}_{X}=G_{0}.X$ has a non trivial relative invariant $R$ with character $\chi$. Let $B$ be the extended form on $\go{g}_{min}(\Gamma(\go{g}_{0},B_{0},V))$ defined in Proposition  \ref{prop-forme-globale}. Let $x\in {\cal O}_{X}$. Then, for any element $y$ in the  class $\varphi_{R}(x)$, $(x, H_{0}, -\displaystyle \frac{B(H_{0},H_{0})}{d\chi(H_{0})}y)$ is an associated ${\go {sl}}_{2}$-triple. Therefore, in  the notation of Proposition \ref{th-equivalence-invariants-(P)-sl2}, we have $\varphi(x)=  -\displaystyle \frac{B(H_{0},H_{0})}{d\chi(H_{0})}\varphi_{R}(x)$

  \end{prop}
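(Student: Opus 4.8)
The plan is to compute the bracket $[y,x]$ explicitly from the defining property of the bracket in Theorem \ref{th-algebre-locale} together with the infinitesimal form of the relative invariance of $R$, and then to read off the scalar that turns a representative $y$ of the class $\varphi_{R}(x)$ into the third member of an $\go{sl}_{2}$-triple containing $(x,H_{0})$.

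First I would infinitesimalise the relative invariance. Since $R(\pi(g)X)=\chi(g)R(X)$ and $R$ is non trivial, $R(X)\neq 0$, hence $R$ is nowhere zero on ${\cal O}_{X}$. Differentiating $R(\pi(g)x)=\chi(g)R(x)$ along $g=\exp(tA)$ with $A\in\go{g}_{0}$ and dividing by $R(x)$ gives, for every $x\in{\cal O}_{X}$,
$$\varphi_{R}(x)(\rho(A)x)=d\chi(A),\qquad A\in\go{g}_{0},$$
a well defined number because $\rho(A)x\in T_{x}$ and $\varphi_{R}(x)$ is a class modulo $(T_{x})^{\perp}$. The differential $d\chi$ is a Lie algebra morphism into the abelian algebra $\C$, hence vanishes on $\go{g}_{0}'=[\go{g}_{0},\go{g}_{0}]$; therefore every representative $y\in\varphi_{R}(x)$ satisfies $y(\go{g}_{0}'.x)=\{0\}$, and specialising $A=H_{0}$ together with $\rho(H_{0})x=2x$ gives $y(x)=\frac{1}{2}d\chi(H_{0})$.

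Next I would feed these two facts into the bracket. By Theorem \ref{th-algebre-locale} b), $[x,y]$ is the unique element of $\go{g}_{0}$ with $B_{0}([x,y],U)=y(\rho(U)x)$ for all $U\in\go{g}_{0}$; by the previous step this vanishes for $U\in\go{g}_{0}'$, so $[x,y]\in(\go{g}_{0}')^{\perp}$. As in the proof of Theorem \ref{th-CNS-sl2}, the decomposition $\go{g}_{0}=\C H_{0}\oplus\go{g}_{0}'$ is $B_{0}$-orthogonal and $B_{0}$ nondegenerate, so $B(H_{0},H_{0})=B_{0}(H_{0},H_{0})\neq 0$ and $(\go{g}_{0}')^{\perp}=\C H_{0}$; write $[x,y]=cH_{0}$. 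Pairing with $H_{0}$ and using the step above, $c\,B(H_{0},H_{0})=B_{0}([x,y],H_{0})=y(\rho(H_{0})x)=2y(x)=d\chi(H_{0})$, hence $[y,x]=-[x,y]=-\frac{d\chi(H_{0})}{B(H_{0},H_{0})}H_{0}$. Putting $\widetilde Y=-\frac{B(H_{0},H_{0})}{d\chi(H_{0})}y$ then yields $[\widetilde Y,x]=H_{0}$, while $[H_{0},x]=\rho(H_{0})x=2x$ and $[H_{0},\widetilde Y]=\rho^{*}(H_{0})\widetilde Y=-2\widetilde Y$, so $(\widetilde Y,H_{0},x)$ is an associated $\go{sl}_{2}$-triple in the sense of Section 4. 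Since $y$ ranges over the whole class $\varphi_{R}(x)$, and both $\varphi(x)$ (by Proposition \ref{prop-y=section-cotangent} a)) and $-\frac{B(H_{0},H_{0})}{d\chi(H_{0})}\varphi_{R}(x)$ are classes modulo $(T_{x})^{\perp}$, the inclusion $-\frac{B(H_{0},H_{0})}{d\chi(H_{0})}\varphi_{R}(x)\subset\varphi(x)$ is an equality, which is the final assertion.

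The one point that is not purely formal — and the main obstacle — is to know that $d\chi(H_{0})\neq 0$, so that the scalar is defined; I would argue by contradiction. If $d\chi(H_{0})=0$, then by the first step $\varphi_{R}(x)$ annihilates $\go{g}_{0}'.x$ and also $x$ (since $\varphi_{R}(x)(2x)=d\chi(H_{0})=0$), hence annihilates $T_{x}=\go{g}_{0}.x$, i.e. $dR(x)=0$ in $T_{x}^{*}$ for every $x\in{\cal O}_{X}$. A rational function with identically vanishing differential on the irreducible smooth variety ${\cal O}_{X}$ is constant, contradicting the non triviality of $R$. Hence $d\chi(H_{0})\neq 0$ and the argument goes through.
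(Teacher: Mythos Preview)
Your proof is correct and follows essentially the same route as the paper's: differentiate the relative invariance, use the defining relation $B_{0}([x,y],U)=y(\rho(U)x)$ to see $[x,y]\in(\go{g}_{0}')^{\perp}=\C H_{0}$, and compute the constant by pairing with $H_{0}$. The only notable difference is your justification of $d\chi(H_{0})\neq 0$: the paper simply observes that since $d\chi$ vanishes on $\go{g}_{0}'$ and $\go{g}_{0}=\C H_{0}\oplus\go{g}_{0}'$, non-triviality of $\chi$ on the connected group $G_{0}$ forces $d\chi(H_{0})\neq 0$; your argument via the vanishing of $dR$ on the tangent space is correct but more circuitous.
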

 
 \begin{proof}\hfill

 


 For $A\in \go{g}_{0}$ and $x\in {\cal O}_{X}$, let us   derive the identity
 $R(\pi(\exp tA)x)=\chi(\exp tA)R(x)$ with respect to $t$, at $t=0$. We obtain
 $$dR(x)d\pi(A)x=dR(x)\rho(A)x= d\chi(A)R(x).$$
  We observe that   $B(y, [A, x])$ does only depend on the class $\varphi_{R}(x)$ of $y$. Therefore the preceding equation can be written:  
 $$B(\frac{dR(x)}{R(x)},\rho(A)x)=B(\varphi_{R}(x), [A,x])=d\chi(A) .$$
 
 
 And as $B$ is invariant we obtain:
 $$\forall A\in \go{g}_{0},\,\,\, -B([\varphi_{R}(x),x], A)=d\chi(A).$$
 As $B$ is non-degenerate and as $d\chi(\go{g}_{0}')=0$, 
 $[\varphi_{R}(x),x]$ is a fixed vector (as $x$ varies) orthogonal to $\go{g}_{0}'$. Hence $[\varphi_{R}(x),x]=cH_{0}$ $(c\in \C)$. If $A=H_{0}$, one obtains $-B([\varphi_{R}(x),x], H_{0})=d\chi(H_{0})\neq 0$ (because  $\chi$ is non trivial). Therefore   $-cB(H_{0},H_{0})=d\chi(H_{0})$ and $c= -\displaystyle\frac{d\chi(H_{0})}{B(H_{0},H_{0})}\neq 0$.  
   
 Then $(-\displaystyle \frac{B(H_{0},H_{0})}{d\chi(H_{0})}\varphi_{R}(x) ,H_{0}, x)$ is an ${\go {sl}}_{2}$-triple (this means that for $y$ in the class of $-\displaystyle \frac{B(H_{0},H_{0})}{d\chi(H_{0})}\varphi_{R}(x)$ in $V^*/ (T_{x})^{\perp}$, $(y,H_{0},x)$ is an associated $\go{sl}_{2}$-triple).
 

 \end{proof}

\begin{rem}\label{rem-equivariance-gradlog}\hfill

It is worth noticing that from the preceding result,   the "gradlog" section $x\longmapsto \displaystyle \frac{dR(x)}{R(x)}=\varphi_{R}(x)$ satisfies the same equivariance property as $\varphi$:
$$\varphi_{R}(\pi(g)x)=\pi^*(g)(\varphi_{R}(x)).$$

 



\end{rem}

\vskip 20pt

\section{ Lie algebras of polynomial type and dual pairs}

\subsection{ Lie algebras of polynomial type}\hfill

In this section we will deal with a particular kind of minimal graded Lie algebras of the form $\go{g}_{min}(\Gamma(\go{g}_{0},B_{0},\rho))$. 
 \begin{definition}\label{def-type-symplectique}\hfill 
 
 Let $W$ be a finite dimensional vector space over $\C$, and let $\go{gl}(W)$ be the Lie algebra of endomorphisms of $W$.  For $p\in \N^*$, let  $\C^p[W]$ be the vector space of homogeneous polynomials of degree $p$ on $W$.  We set $\go{g}_{0}= \go{gl}(W)$ and $V= \C^p[W]$.  For $\lambda\in \C^*$ we define the representation $\rho_{\lambda}$ of $\go{gl}(W)$ on $\C^p[W]$ by saying that ${\rho_{\lambda}}_{|_{\go{sl}(W)}}$ is the natural representation of $ \go{sl}(W)$ on $\C^p[W]$ and $\rho_{\lambda}({\rm Id}_{W})=\lambda {\rm Id}_{\C^p[W]}$. Let $B_{0}$ be a non degenerate bilinear symmetric form on $ \go{gl}(W)$.
 With  these notations, the Lie algebra of polynomial type $\go{p}^p(W,B_{0})$ is defined 
 by $$\go{p}^p(W,B_{0},\lambda)=\go{g}_{min}(\Gamma(\go{gl}(W),B_{0},\rho_{\lambda})) .$$
 
 \end{definition}
 
 \begin{rem}\label{rem-divers-symplectique}\hfill
 
  1)  Note that from Proposition \ref{prop-changement-rep}, we have $\go{p}^p(W,B_{0},\lambda)\simeq \go{p}^p(W,\mu{\scriptscriptstyle\Box}B_{0},\frac{\lambda}{\sqrt{\mu}})$.
 
 2) Note from Lemma \ref{lemma-forme-locale} and Proposition \ref{prop-forme-globale} that the form $B_{0}$ extends uniquely to a non-degenerate invariant form $B$ on $\go{p}^p(W,B_{0},\lambda)$ such that $B(\go{g}_{i}, \go{g}_{j})=0$ if $i+j\neq 0$.
 
 3) Remember also that $\go{g}_{-1}= (\C^p[W])^*\simeq \C^p[W^*]$. Let $Q\in \C^p[W^*]$. Define a differential operator $Q(\partial)$ on $W$ by setting:
 $$Q(\partial)e^{\langle x,y\rangle }= Q(y)e^{\langle x,y\rangle }\,\,\, \text{ for all }x\in W \text{ and } y\in W^*.$$
 (Here ${\langle x,y\rangle }$ denotes the value of the linear form $y$ on $x$).
 
 Then the  isomorphism between    $\C^p[W^*]$ and $ (\C^p[W])^*$ sends $Q$ on the linear form $P\longmapsto Q(\partial)P(0)=Q(\partial)P$.

 \end{rem}
 
 \begin{prop}\label{prop-symplectic-finite} \hfill
 
 The only finite dimensional  Lie algebras of polynomial  type are:
 
 $1)$ $\go{sl}_{n+1}(\C)\simeq  \go{p}^1(\C^n, B_{0},1)$, where  $B_{0}(U,V)=2(n+1)\tr(UV)-2\tr(U)\tr(V)$ $(U,V\in \go{gl}(\C^n))$, and where the grading of $ \go{sl}(n+1,\C)$ is defined by the diagram:
 
\hskip 150pt  \vbox{\hbox{\unitlength=0.5pt
\hskip-80pt
\begin{picture}(400,30)(0,10)
\put(90,10){\circle*{10}}
\put(85,-15){$0$}
\put(95,10){\line (1,0){30}}
\put(130,10){\circle*{10}}
\put(125,-15){$0$}
 \put(140,10){\circle*{1}}
\put(145,10){\circle*{1}}
\put(150,10){\circle*{1}}
\put(155,10){\circle*{1}}
\put(160,10){\circle*{1}}
\put(165,10){\circle*{1}}
\put(170,10){\circle*{1}}
\put(175,10){\circle*{1}}
\put(180,10){\circle*{1}}
 \put(195,10){\circle*{10}}
 \put(190,-15){$0$}
 \put(195,10){\line (1,0){30}}
\put(230,10){\circle*{10}}
\put(225,-15){$0$}
 
\put(235,10){\line (1,0){30}}
\put(270,10){\circle*{10}}
 \put(265,-15){$0$}
 \put(280,10){\circle*{1}}
\put(285,10){\circle*{1}}
\put(290,10){\circle*{1}}
\put(295,10){\circle*{1}}
\put(300,10){\circle*{1}}
\put(305,10){\circle*{1}}
\put(310,10){\circle*{1}}
\put(315,10){\circle*{1}}
\put(320,10){\circle*{1}}
\put(330,10){\circle*{10}}
 \put(325,-15){$0$}
\put(335,10){\line (1,0){30}}
\put(370,10){\circle*{10}}
 \put(365,-15){$1$}
 
\end{picture} 
\hskip 15pt\raisebox {-4pt}{$A_{n}$}
 
}
}
\vskip 10pt

$($see Example \ref{ex-PH-paraboliques} for the definition of the corresponding grading$)$.

$2) $ $\go{o}(2n+1,\C)\simeq  \go{p}^1(\C^n, B_{0},1)$, where $B_{0}(U,V)= 4(n-1)\tr(UV)+\frac{2}{n}\tr(U)\tr(V)$, $(U,V\in \go{gl}(\C^n))$, and where the grading  of $\go{o}(2n+1,\C)$ is defined by the diagram:
  
  \hskip 110pt\raisebox{-10pt}{{
\hbox{\unitlength=0.5pt
\begin{picture}(250,30) 
\put(10,10){\circle*{10}}
\put(5,-15){$0$}
\put(15,10){\line (1,0){30}}
\put(50,10){\circle*{10}}
\put(45,-15){$0$}
\put(55,10){\line (1,0){30}}
\put(90,10){\circle*{10}}
\put(85,-15){$0$}
 \put(95,10){\circle*{1}}
\put(100,10){\circle*{1}}
\put(105,10){\circle*{1}}
\put(110,10){\circle*{1}}
\put(115,10){\circle*{1}}
\put(120,10){\circle*{1}}
\put(125,10){\circle*{1}}
\put(130,10){\circle*{1}}
\put(135,10){\circle*{1}}
\put(140,10){\circle*{10}}
\put(135,-15){$0$}
\put(145,10){\line (1,0){30}}
\put(180,10){\circle*{10}}
\put(175,-15){$0$}
\put(184,12){\line (1,0){41}}
\put(184,8){\line(1,0){41}}
\put(200,5.5){$>$}
\put(230,10){\circle*{10}}
\put(225,-15){$1$}
\end{picture} \,\,\,\,\, $B_{n}$
}}}

\vskip 15pt

$3) $ $\go{sp}(n,\C)\simeq  \go{p}^2(\C^n, B_{0},1)$, where   $B_{0}(U,V)= 4(n+1)\tr(UV)-3\frac{n+1}{n}\tr(U)\tr(V)$, $(U,V\in \go{gl}(\C^n))$, and where the grading  of $\go{sp}(n,\C)$ is defined by the diagram:
  
  \hskip 110pt\raisebox{-10pt}{{
\hbox{\unitlength=0.5pt
\begin{picture}(250,30) 
\put(10,10){\circle*{10}}
\put(5,-15){$0$}
\put(15,10){\line (1,0){30}}
\put(50,10){\circle*{10}}
\put(45,-15){$0$}
\put(55,10){\line (1,0){30}}
\put(90,10){\circle*{10}}
\put(85,-15){$0$}
 \put(95,10){\circle*{1}}
\put(100,10){\circle*{1}}
\put(105,10){\circle*{1}}
\put(110,10){\circle*{1}}
\put(115,10){\circle*{1}}
\put(120,10){\circle*{1}}
\put(125,10){\circle*{1}}
\put(130,10){\circle*{1}}
\put(135,10){\circle*{1}}
\put(140,10){\circle*{10}}
\put(135,-15){$0$}
\put(145,10){\line (1,0){30}}
\put(180,10){\circle*{10}}
\put(175,-15){$0$}
\put(184,12){\line (1,0){41}}
\put(184,8){\line(1,0){41}}
\put(200,5.5){$<$}
\put(230,10){\circle*{10}}
\put(225,-15){$1$}
\end{picture} \,\,\,\,\, $C_{n}$
}}}

\vskip 15pt

$4)$ $ G_{2}\simeq \go{p}^3(\C^2,B_{0},1)$  where,  up to a multiplicative constant,    $B_{0}(U,V)=24\tr(UV)- 8\tr(U)\tr(V)$ $(U,V\in \go{gl}(\C^2))$ and where the grading of $G_{2} $ is defined by the diagram:
\vskip 5 pt

\hskip 110pt\raisebox{5pt}{{
\hbox{\unitlength=0.5pt
\begin{picture}(250,30)
 \put(53,10){\circle*{10}}
\put(49,-15){$0$}
\put(55,14){\line (1,0){41}}
\put(63,4){$<$}
\put(55,9,5){\line(1,0){41}}
\put(55,5){\line(1,0){41}}
\put(95,10){\circle*{10}}
\put(90,-15){$1$}
 \end{picture}$G_{2}$
}}} 

\vskip 15pt

$5)$ $ G_{2}\simeq \go{p}^1(\C^2,B_{0},1)$  where,  up to a multiplicative constant,    $B_{0}(U,V)=8\tr(UV)+8 \tr(U)\tr(V)$ $(U,V\in \go{gl}(\C^2))$ and where the grading of $G_{2} $ is defined by the diagram:
\vskip 5 pt

\hskip 110pt\raisebox{5pt}{{
\hbox{\unitlength=0.5pt
\begin{picture}(250,30)
 \put(53,10){\circle*{10}}
\put(49,-15){$1$}
\put(55,14){\line (1,0){41}}
\put(63,4){$<$}
\put(55,9,5){\line(1,0){41}}
\put(55,5){\line(1,0){41}}
\put(95,10){\circle*{10}}
\put(90,-15){$0$}
 \end{picture}$G_{2}$
}}} 

$($The isomorphisms occuring above are isomorphisms of graded quadratic graded Lie algebras, the simple algebras being endowed with the Killing form.$)$
 \end{prop}
 
 \begin{proof}\hfill
 
 As the representation $\rho_{\lambda}$ is faithful, we know from Proposition \ref{prop-finie-ss} that if $\dim(\go{p}^p(W,B_{0},\lambda))<+\infty$, then $\go{p}^p(W,B_{0},\lambda)$ is semi-simple. Hence the grading of $\dim(\go{p}^p(W,B_{0}))$ is defined by a weighted Dynkin diagram with weights equal to $0$ or $1$, see Example \ref{ex-PH-paraboliques}. As the representation is irreducible there must be only one $1$ among the weights. Remember also from Example \ref{ex-PH-paraboliques} that   $\go{g}_{0}'$ corresponds to the sub-diagram of roots of weight $0$. As in our case   $\go{g}_{0}'\simeq \go{sl}_{n}$ ($n=\dim W$), we are looking here for a connected Dynkin diagram whith $n $ vertices,  where the subdiagram of vertices of weight $0$ is of type $A_{n -1}$.  Moreover, from Definition \ref{def-type-symplectique}, the representation of $\go{sl}_{n}\simeq A_{n-1}$ on $\go{g}_{1}$ must be one of the natural representations of $\go{sl}_{n}$ on the space of polynomials of a given degree $k$ on  $\C^n$ or $(\C^n)^*$. These representations have highest weight either $k\lambda_{1}$ or $k\lambda_{n-1}$, where $\lambda_{1}$ and $\lambda_{n-1}$ are the fundamental weights corresponding to the "end roots" of the Dynkin diagram $A_{n-1}$.  This implies that the removed root is connected to one of the "end roots" of $A_{n-1}$. Hence the weighted Dynkin diagram   of $\go{p}^p(W,B_{0},\lambda)$ must be of the following type:
 
 \hskip 150pt  \vbox{\hbox{\unitlength=0.5pt
\hskip-80pt
\begin{picture}(400,30)(0,10)
\put(82,25){$\alpha_{1}$}
\put(90,10){\circle*{10}}
\put(85,-15){$0$}
\put(95,10){\line (1,0){30}}
\put(122,25){$\alpha_{2}$}
\put(130,10){\circle*{10}}
\put(125,-15){$0$}
 \put(140,10){\circle*{1}}
\put(145,10){\circle*{1}}
\put(150,10){\circle*{1}}
\put(155,10){\circle*{1}}
\put(160,10){\circle*{1}}
\put(165,10){\circle*{1}}
\put(170,10){\circle*{1}}
\put(175,10){\circle*{1}}
\put(180,10){\circle*{1}}
 \put(195,10){\circle*{10}}
 \put(190,-15){$0$}
 \put(195,10){\line (1,0){30}}
\put(230,10){\circle*{10}}
\put(225,-15){$0$}
 
\put(235,10){\line (1,0){30}}
\put(270,10){\circle*{10}}
 \put(265,-15){$0$}
 \put(280,10){\circle*{1}}
\put(285,10){\circle*{1}}
\put(290,10){\circle*{1}}
\put(295,10){\circle*{1}}
\put(300,10){\circle*{1}}
\put(305,10){\circle*{1}}
\put(310,10){\circle*{1}}
\put(315,10){\circle*{1}}
\put(320,10){\circle*{1}}
\put(315,25){$\alpha_{n-1}$}
\put(330,10){\circle*{10}}
 \put(325,-15){$0$}
\put(335,10){\line (1,0){60}}
\put(335,11){\line (1,0){60}}
\put(335,12){\line (1,0){60}}
\put(335,9){\line (1,0){60}}
\put(335,8){\line (1,0){60}}
\put(375,25){$\alpha_{n}$}
\put(390,10){\circle*{10}}
 \put(386,-15){$1$}
 
\end{picture} 
 
}
}

\vskip 20pt
where the boldface edge stands for one of the standard allowed edges (possibly multiple) in a Dynkin diagram. 

Let $\omega$ be the fundamental weight for $\go{g}_{0}'$ corresponding to the root $\alpha_{n-1}$ in the sub-diagram of type $A_{n-1}$ (picture above). Denote by $p$ the number of edges between $\alpha_{n-1}$ and $\alpha_{n}$. Then the representation $(\go{g}_{0}',\go{g}_{1}) $  has lowest weight $-p\omega$ when  $||\alpha_{n}||\geq ||\alpha_{n-1}||$    and has lowest weiht $-\omega$ when $||\alpha_{n}||\leq  ||\alpha_{n-1}||$  (for the general calculation of the lowest weight of  $(\go{g}_{0}',V)$ from the Dynkin diagram  in prehomogeneous vector spaces of parabolic type we refer to  \cite{Rubenthaler-book} p. 135).
From the list of the Dynkin diagrams we obtain exactly the five  cases in the Proposition.

  For each case we will prove that the  fundamental triplets corresponding to the graded simple algebra and to the given  Lie algebra of polynomial type are isomorphic. Then  Theorem \ref{th-bijection} implies that this isomorphism extends to a  graded isomorphism of the minimal algebras.

Let us first make some general computations. Let $\go{g}=\oplus_{i=-k}^k \go{g}_{i}$ be a simple graded Lie algebra, such that $\go{g}_{0}\simeq \go{gl}_{n}$ and such that $(\go{g}_{0}',\go{g}_{1})$ is isomorphic to $(\go{sl}_{n},\C^p[\C^n])$, in the sense  that there exists a Lie algebra isomorphism $A: \go{sl}_{n }\longrightarrow \go{g}_{0}$ and a isomorphism $\gamma: \C^p[\C^n]\longrightarrow \go{g}_{1}$ such that for $U\in \go{sl}_{n}, P\in \C^p[\C^n]$, we have $\gamma(U.P)=[A(U), \gamma(P)]$. Of course this is the case for the gradings defined by the diagrams occuring in the proposition. 

Let us extend $A$ to an isomorphism (still denoted $A$) from $\go{gl}_{n}$ to $\go{g}_{0}$ by setting $A( {\rm Id}_{n})=H$ where $H$ is the grading element. We also impose that $ {\rm Id}_{n}$ acts by the identity on $\C^p[\C^n]$. Let $K_{\go{g}}$ be the Killing form of $\go{g}$.   Define a form $B_{0}$ on $\go{gl}_{n}$ by
$B_{0}(U,V)= K_{\go{g}}(A(U),A(V))$ ($U,V\in \go{gl}_{n}$). Of course $U=U'+\lambda{\rm Id}_{n}$, $V=V'+\mu{\rm Id}_{n}$, where $U',V'\in \go{sl}_{n}$ and $\lambda =\displaystyle \frac{\tr (U)}{n}, \mu =\displaystyle \frac{\tr (V)}{n}$.

There exists a constant $c\neq 0$ such that ${K_{\go{g}}}_{|_{\go{g}_{0}'}}=cK_{\go{g}_{0}'}$. Define also $d=\sum_{i=1}^k i^2 \dim{\go{g}_{i}}$. Then:

 $$\begin{array}{rl}
B_{0}(U,V)&=K_{\go{g}}(A(U'),A(V'))+\lambda\mu K_{\go{g}}(A({\rm Id}_{n}),A({\rm Id}_{n}))\\
{}&= cK_{\go{g}_{0}'}(A(U'),A(V'))+\lambda\mu K_{\go{g}}(H,H)\\
{}&=cK_{\go{sl}_{n}}(U',V')+2\lambda\mu d\\
{}&= c2n\tr((U-\frac{\tr(U)}{n}{\rm Id}_{n})(V-\frac{\tr(V)}{n}{\rm Id}_{n}))+\frac{2d}{n^2}\tr(U)\tr(V)\\
{}&=c2n\tr(UV)+(\frac{2d}{n^2}-2c)\tr(U)\tr(V).
\end{array}$$

The pair $(A,\gamma)$ is an isomorphism between the fundamental triplets $(\go{gl}_{n}, B_{0}, (\rho_{1}, \C^p[\C^n]))$ and $(\go{g}_{0}, {K_{\go{g}}}_{|_{\go{g}_{0}}}, \go{g}_{1})$. And this isomorphism extends to an isomorphism (in fact an isomorphism of quadratic graded Lie algebras) between $\go{g}$ and $\go{p}(\C^n, B_{0}, 1)$.

It remains to compute the constants $c$ and $d$ in our cases. For the classical cases this can be done by using explicit realizations of the algebras and the expression of the Killing form by means of the trace. For the two $G_{2}$ cases one can use a root table.

The results are the following:

Case $1)$:  $K_{\go{sl}_{n+1}(\C)}=2(n+1)\tr$, $c=\frac{n+1}{n}$, $d=n$.

Case $2)$: $K_{\go{o}({2n+1},\C)}=(2n-1)\tr$, $c=2\frac{n-1}{n}$, $d=n(2n-1)$.

Case $3)$: $K_{\go{sp}({n},\C)}=(2n+2)\tr$, $c=2\frac{n+1}{n}$, $d=\frac{n(n+1)}{2}$.

Case $4)$: $c=6$, $d=8$.

Case $5)$: $c=2$, $d=24$.

This leads to the given form $B_{0}$ for the polynomial algebras.

  \end{proof}
  \vskip 5pt
  
  \begin{theorem}\label{th-symplectique-sl2} \hfill
  
  Let $\dim W> 1$. Then, except for the case $p=1$, all graded Lie algebras of polynomial type $\go{p}^p(W,B_{0},\lambda)$ are associated to an $\go{sl}_{2}$-triple.
  \end{theorem}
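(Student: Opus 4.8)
The plan is to reduce, via Theorem~\ref{th-CNS-sl2}, the assertion to a concrete statement about the natural action of $\go{sl}(W)$ on $\C^{p}[W]$. By Definition~\ref{def-type-symplectique} we have $\go{sp}^{p}(W,B_{0},\lambda)=\go{g}_{min}(\Gamma(\go{g}_{0},B_{0},\rho_{\lambda}))$ with $\go{g}_{0}=\go{gl}(W)$, and assumption {\bf (H)} holds here: $Z(\go{gl}(W))=\C\,\mathrm{Id}_{W}$ is one-dimensional, $\rho_{\lambda}(\mathrm{Id}_{W})=\lambda\,\mathrm{Id}_{\C^{p}[W]}$ is a nontrivial character since $\lambda\in\C^{*}$, and the grading element is $H_{0}=\tfrac{2}{\lambda}\mathrm{Id}_{W}$. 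Since $\go{g}_{0}'=[\go{gl}(W),\go{gl}(W)]=\go{sl}(W)$, Theorem~\ref{th-CNS-sl2} (the property being the same for $\Gamma$ and for $\go{g}_{min}(\Gamma)$, cf. Definition~\ref{def-sl2-associe}) says that $\go{sp}^{p}(W,B_{0},\lambda)$ is associated to an $\go{sl}_{2}$-triple if and only if there exists $P\in\C^{p}[W]$ with $P\notin\go{sl}(W).P$. So it suffices to produce such a $P$ for every $p\geq2$ and every $W$ with $\dim W=n\geq2$.

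I would take $P$ to be the Fermat form: fixing a basis $x_{1},\dots,x_{n}$ of $W^{*}$, put $P=x_{1}^{p}+\cdots+x_{n}^{p}$. The first step is a reduction to the infinitesimal stabilizer $\go{s}_{P}:=\{C\in\go{gl}(W):\rho_{\lambda}(C)P=0\}$. Assume for the moment that $\go{s}_{P}\subseteq\go{sl}(W)$. If we had $P\in\go{sl}(W).P$, say $\rho_{\lambda}(A)P=P$ with $A\in\go{sl}(W)$, then, using $\rho_{\lambda}(\mathrm{Id}_{W})P=\lambda P$ with $\lambda\neq0$, we would get $A-\tfrac{1}{\lambda}\mathrm{Id}_{W}\in\go{s}_{P}\subseteq\go{sl}(W)$, hence $\tfrac{1}{\lambda}\mathrm{Id}_{W}\in\go{sl}(W)$, which is impossible since $\tr(\tfrac{1}{\lambda}\mathrm{Id}_{W})=\tfrac{n}{\lambda}\neq0$. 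So everything comes down to the elementary computation of $\go{s}_{P}$. Writing $C=(c_{ij})$, a direct computation of the derivation $\rho_{\lambda}(C)$ on $\C^{p}[W]$ gives that $\rho_{\lambda}(C)P$ equals $\sum_{i,j}c_{ij}\,x_{i}^{p-1}x_{j}$ up to a fixed nonzero scalar (the precise scalar, and a sign, depend on the convention for the natural action and play no role here). Comparing coefficients: if $p\geq3$ the monomials $x_{i}^{p-1}x_{j}$ are pairwise distinct, so $\rho_{\lambda}(C)P=0$ forces $C=0$; if $p=2$ it forces $c_{ii}=0$ and $c_{ij}+c_{ji}=0$ for all $i,j$, i.e. $C$ is skew-symmetric. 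In either case $\tr C=0$, so $\go{s}_{P}\subseteq\go{sl}(W)$, and by the reduction above $P\notin\go{sl}(W).P$. Together with Theorem~\ref{th-CNS-sl2} this proves the theorem. (Alternatively one may run the same computation through Theorem~\ref{th-equivalence-invariants-(P)-sl2}: it amounts to the statement that the $GL(W)$-orbit of the Fermat form carries the classical discriminant of degree-$p$ forms in $n$ variables as a nontrivial relative invariant.)

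I do not anticipate a genuine obstacle: the only content is the stabilizer computation above, and the sole mild subtlety is the case $p=2$, where $\go{s}_{P}$ is the space of skew-symmetric matrices rather than $\{0\}$ --- the argument still closes because skew-symmetric matrices are traceless, hence lie in $\go{sl}(W)$. It is also worth recording why $p=1$ is a genuine exception: then $\C^{1}[W]=W^{*}$ is the natural module, on which $\go{sl}(W)$ acts with $\go{sl}(W).X=W^{*}\ni X$ for every $X\neq0$, so no $X$ satisfies $X\notin\go{sl}(W).X$, and by Theorem~\ref{th-CNS-sl2} the algebra $\go{sp}^{1}(W,B_{0},\lambda)$ admits no associated $\go{sl}_{2}$-triple; this matches $\go{sp}^{1}(\C^{n},B_{0},n)\simeq\go{sl}(n+1,\C)$ of Proposition~\ref{prop-symplectic-finite}, which corresponds to a non-regular prehomogeneous vector space of parabolic type (Corollary~\ref{cor-P-PHparaboliques}).
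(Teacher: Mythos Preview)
Your argument is correct and follows essentially the same route as the paper: reduce via Theorem~\ref{th-CNS-sl2} to showing $P\notin\go{sl}(W).P$ for the Fermat form $P=x_{1}^{p}+\cdots+x_{n}^{p}$, then handle $p=1$ separately. One small wrinkle: your displayed formula ``$\rho_{\lambda}(C)P=\sum c_{ij}x_{i}^{p-1}x_{j}$ up to scalar'' is only literally correct for the vector-field action of $\go{gl}(W)$, not for $\rho_{\lambda}$ (which sends $\mathrm{Id}_{W}$ to $\lambda\,\mathrm{Id}$ rather than to the Euler operator, so there is an extra term $(\tr C/n)(\lambda-p)P$); however this does not affect your conclusion $\go{s}_{P}\subset\go{sl}(W)$, and the paper simply sidesteps the detour through $\go{s}_{P}$ by computing directly that $U.P=P$ with $U\in\go{sl}(n)$ forces $a_{ii}=1/p$ for all $i$, hence $\tr U=n/p\neq0$.
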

  
  \begin{proof}\hfill

Remember from Theorem \ref {th-CNS-sl2}, that there exists an associated    $\go{sl}_{2}$-triple if and only if there exists $X\in V\setminus \{0\}$ such that $X\notin \go{g}_{0}'.X$. We identify $W$ with $\C^n$, where $n=\dim W$.  Define $P\in \C^p[\C^n]$ by
$$P(x)=x_{1}^p+x_{2}^p+\dots+x_{n}^p.$$
By the natural representation,  $\go{sl}(n)$ acts on $ \C^p[\C^n]$ by the classical vector fields:
$$U=(a_{i,j})\longmapsto \sum_{i,j} a_{i,j} x_{i}\frac {\partial}{\partial x_{j}}.$$
If $U.P=P$, for some $U\in \go{sl}(n)$:
$$\begin{array}{rl}
 \displaystyle(\sum_{i,j} a_{i,j} x_{i}\frac {\partial}{\partial x_{j}})P(x)&=  \displaystyle(\sum_{i,j} a_{i,j} x_{i}\frac {\partial}{\partial x_{j}})(\sum_{k}x_{k}^p)\\
 = \displaystyle\sum_{i,j,k}a_{i,j}x_{i}(\frac{\partial}{\partial x_{j}}x_{k}^p)&=  \displaystyle\displaystyle\sum_{i,j,k}a_{i,j}x_{i}px_{k}^{p-1}\delta_{j,k} \\
 = \displaystyle\sum _{i,j}a_{i,j}x_{i}p x_{j}^{p-1}&= \displaystyle\sum_{k}x_{k}^p.
 \end{array}$$
 For $p>1$, this implies that $a_{i,i}=\frac{1}{p}$ for  $i=1,\dots,n$, and then $U$ cannot be in $\go{sl}(n)$. Therefore the Lie algebras of polynomial type $\go{p}^p(W,B_{0},\lambda)$ are associated to an $\go{sl}_{2}$-triple for $p>1$.
 
 Suppose now $p=1$. From Remark  \ref{rem-sl2-ind-forme} b) it is enough to prove that  for any $X\in \C^n\setminus \{0\}$, there exists $U\in \go{sl}(n)$ such that $U.X=X$. But as $\C^n\setminus \{0\}$ is a single orbit under the group $SL(n,\C)$, the map $U\longmapsto U.X$ from $\go{sl}(n)$ to $\C^n$ is surjective.

  \end{proof}

\subsection{Prehomogeneous vector spaces and dual pairs}\hfill
\vskip 5pt
In  Theorem \ref{th-symplectique-sl2} we have shown that if $n=\dim W> 1$ and $p>1$, then the polynomial $X=x_{1}^p+\dots+x_{n}^p$ is the nil-positive element of an $\go{sl}(2)$-triple associated to the algebra  $\go{p}^p(W,B_{0},\lambda)$. We will now show that, under some assumptions, if $W$ is the space of an irreducible regular  prehomogeneous vector space, and if $P$ is the corresponding fundamental invariant, then if  $p=\partial^\circ(P)$, the nil-positive element of an  associated $\go{sl}(2)$-triple of $\go{p}^p(W,B_{0},\lambda)$ can also be taken to be $P$.

\vskip 5pt
\begin{prop} \label{prop-invariant=nilpositif} \hfill

Let $W$ be a finite dimensional vector space over $\C$. Let $A\subset GL(W)$ be a connected reductive algebraic group with a one dimensional center. Denote by $\go{a}$  its Lie algebra. Suppose that $(A , W)$ is  an irreducible  regular prehomogeneous vector space. Let $P$ be the corresponding fundamental invariant.   We also make the following assumption 
$$\begin{array}{rl}{}&\go{a}=\{U\in \go{gl}(W)\,|\, \exists \mu\in \C, \, U.P=\mu P \}\\
\text{ or  equivalently }&{}\\

 {}&\go{a}'=\{U\in\go{gl}(W)\,|\, U.P=0\}  
\end{array} \eqno{(5-2-1)}$$
Then, for   any $B_{0}$,   $P$ is the nil-positive element of an associated $\go{sl}(2)$-triple in $\go{p}^{\partial^\circ(P)}(W,B_{0},\lambda)$ .
\end{prop}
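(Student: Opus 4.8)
The plan is to apply Theorem~\ref{th-CNS-sl2}: since $(\go{g}_0,B_0,\rho_\lambda)$ satisfies assumption {\bf (H)} (the center $\C\,\mathrm{Id}_W$ acts by the non-trivial scalar $\lambda$, and we can rescale so that the grading element $H_0$ acts by $2$), the graded Lie algebra $\go{sp}^p(W,B_0,\lambda)$ is associated to an $\go{sl}_2$-triple with nil-positive element $P$ if and only if $P\notin \go{g}_0'.P$, where $\go{g}_0'=\go{sl}(W)$. By Corollary~\ref{cor-sl2-ind-forme}~a), this condition is independent of the choice of $B_0$, which is why the statement can quantify over all $B_0$. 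So the entire content to be proved is the single assertion
$$P\notin \go{sl}(W).P=\{U.P \mid U\in \go{sl}(W)\}.$$

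First I would translate the hypothesis $(5\text{-}2\text{-}1)$ into the relevant language: it says precisely that $\go{a}'=\{U\in\go{gl}(W)\mid U.P=0\}$ is the kernel in $\go{gl}(W)$ of the linear map $U\mapsto U.P$, and that $\go{a}$ is the full preimage of the line $\C P$ under this map (equivalently $\go{a}/\go{a}'$ maps isomorphically onto $\C P$ inside $\C^p[W]$, since the center of $\go{a}$ is one-dimensional and acts on $P$ by the character $\mu$, with $\mu$ non-trivial because $(A,W)$ is regular so $P$ is a genuine non-constant invariant). In particular $\mathrm{Id}_W\in\go{a}$ and $\mathrm{Id}_W.P=pP$ with $p\geq 1$; and $\go{sl}(W)\cap\go{a}=\go{a}'$ because any traceless element of $\go{a}$ lies in $\go{a}'$ (the center $\C\mathrm{Id}_W$ being a complement to $\go{a}'$ in $\go{a}$).

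Now suppose, for contradiction, that $P\in\go{sl}(W).P$, say $P=U.P$ with $U\in\go{sl}(W)$. Then $U.P=P=\frac1p\,\mathrm{Id}_W.P$, so $(U-\frac1p\mathrm{Id}_W).P=0$, i.e. $U-\frac1p\mathrm{Id}_W\in\go{a}'$ by $(5\text{-}2\text{-}1)$. Hence $U\in\go{a}'+\C\mathrm{Id}_W=\go{a}$, and since $U\in\go{sl}(W)$ this forces $U\in\go{sl}(W)\cap\go{a}=\go{a}'$, so $U.P=0$. But $U.P=P\neq 0$ (as $P$ is a non-zero invariant, $(A,W)$ being regular), a contradiction. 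Therefore $P\notin\go{sl}(W).P$, and Theorem~\ref{th-CNS-sl2} yields an associated $\go{sl}_2$-triple $(P,H_0,Y)$ in $\go{g}_{min}(\Gamma(\go{g}_0,B_0,\rho_\lambda))=\go{sp}^p(W,B_0,\lambda)$ for any $B_0$.

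I expect the argument to be essentially this short; the only point requiring a little care is verifying that the normalization built into Definition~\ref{def-type-symplectique} is compatible with assumption {\bf (H)} (namely that after possibly passing to an isomorphic algebra via Proposition~\ref{prop-changement-rep} we may assume $\rho_\lambda(\mathrm{Id}_W)$ acts as $2\,\mathrm{Id}_V$, so that $H_0=\frac{2}{p\lambda}\cdot(\text{something})$ makes sense as the grading element), and that Corollary~\ref{cor-sl2-ind-forme}~a) legitimately removes the dependence on $B_0$. The potential obstacle, if any, is purely bookkeeping: making sure that the grading element $H_0$ appearing in Theorem~\ref{th-CNS-sl2} is the scalar multiple of $\mathrm{Id}_W$ dictated by {\bf (H)} and that $P$ indeed lies in $\go{g}_1=V=\C^p[W]$ with the right weight $2$, which is immediate since every element of $V$ is homogeneous of degree $1$ for the grading. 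No non-trivial calculation is needed beyond this.
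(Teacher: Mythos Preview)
Your proposal is correct and follows essentially the same approach as the paper's proof: assume $U.P=P$ for some $U\in\go{sl}(W)$, subtract a suitable scalar multiple of $\mathrm{Id}_W$ so that the difference annihilates $P$, invoke $(5\text{-}2\text{-}1)$ to land in $\go{a}'\subset\go{sl}(W)$, and obtain a contradiction; then conclude by Theorem~\ref{th-CNS-sl2}. The only cosmetic difference is that you subtract $\tfrac1p\,\mathrm{Id}_W$ (Euler scaling) and deduce $U\in\go{a}'$, hence $U.P=0$, whereas the paper subtracts the grading element $H=\tfrac1\lambda\,\mathrm{Id}_W$ and deduces $H\in\go{sl}(W)$; these are two phrasings of the same contradiction.
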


\begin{proof}\hfill

 Let $H=\displaystyle \frac{1}{\lambda}\text{Id}_{W}$ be the grading element of $\go{p}^{\partial^\circ(P)}(W,B_{0},\lambda)$. Suppose that there exists $U\in \go{sl}(W)$, such that $U.P=P$. Then  we would have $(H-U).P=0$. Then from $(5-2-1)$, we obtain that $(H-U)\in \go{a}'$ and therefore $H\in \go{a}'+U \subset \go{sl}(W)$. This is not true. Then the proposition is a consequence of Theorem \ref{th-CNS-sl2}.

\end{proof}

\begin{rem}\label{rem-cond-vraie-struct}\hfill

Condition $(5-2-1)$ is always satisfied if $A$ is the structure group of a simple Jordan algebra $W$ over $\C$.    See \cite{Faraut-Koranyi-book}, Chapter VIII, exercise 5 p. 160-161.   This  case corresponds to the so-called prehomogeneous vector spaces of commutative parabolic type (see \cite{Rubenthaler-book}, ch. 5  or \cite {Muller-Rubenthaler-Schiffmann}).

 But it is also satisfied for many others prehomogeneous vector spaces.

\end{rem}

\begin{definition}\label{def-paire-duale}\hfill

Let  $\go{g}$ be Lie algebra. A pair $(\go{g}_{1},\go{g}_{2})$ of Lie subalgebras of $\go{g}$ is called a {\it dual pair} if $\go{g}_{1}$ is the centralizer of $\go{g}_{2}$ in $\go{g}$ and vice versa.
\end{definition}

 Let $P^*$ be the fundamental invariant of the dual prehomogeneous vector space $(A,W^*)$ (as $(A,W)$ is regular, the dual representation has a non trivial relative invariant). Of course $P^*$ is only defined up to a multiplicative constant.

\begin{theorem}\label{th-paire-duale}\hfill

The notations are as in Proposition \ref{prop-invariant=nilpositif}, and we suppose $\lambda\neq0$ and that condition $(5-2-1)$ is satisfied. 

Let $\widetilde{\go{b}}=Z_{\go{p}^p(W,B_{0},\lambda)}(\go{a}')$ be the centralizer of $\go{a}'$ in $\go{p}^p(W,B_{0},\lambda)$.

$1)$ One can choose $P^*$ such that $(P^*, H_{0}=\frac{2}{\lambda}\text{Id}_{W}, P)$ is an $\go{sl}(2)$-triple $($associated to the graded Lie algebra $\go{p}^p(W,B_{0},\lambda)$$)$. Then, if $\go{b} $ is the Lie subalgebra of $\go{p}^p(W,B_{0},\lambda)$ isomorphic to $\go{sl}(2) $ generated by this triple, we have $$\go{b}=\widetilde{\go{b}}\cap \Gamma(\go{p}^p(W,B_{0},\lambda)) $$

$2)$ Moreover $(\go{a}', \widetilde{\go{b}}) $ is a dual pair in $\go{p}^p(W,B_{0},\lambda)$.

 \end{theorem}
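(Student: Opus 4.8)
The plan is to build the $\go{sl}(2)$-triple first, then identify $\go{b}$ inside the local part, and finally establish the double-centralizer property. For part $1)$: by Proposition \ref{prop-invariant=nilpositif}, $P$ is the nil-positive element of some associated $\go{sl}_2$-triple, so there exists $Y\in V^*=\C^p[W^*]$ with $(Y,H_0,P)$ an $\go{sl}_2$-triple and $H_0=\frac{2}{\lambda}\text{Id}_W$ (since $\rho_\lambda(\text{Id}_W)=\lambda\text{Id}_V$ gives $\rho_\lambda(\frac{2}{\lambda}\text{Id}_W)=2\text{Id}_V$). I would show $Y$ is, up to a scalar, $P^*(\partial)$, viewed in $(\C^p[W])^*$ via Remark \ref{rem-divers-symplectique} $3)$: the line $\C Y$ is forced by the conditions $[Y,P]=H_0\in (\go{g}_0')^\perp$ and transitivity/irreducibility — indeed $\varphi(P)$ is a single class modulo $(T_P)^\perp$ by Proposition \ref{prop-y=section-cotangent}, and since $P$ lies in the open orbit of the prehomogeneous space $(A,W)$ (using $(5\text{-}2\text{-}1)$, $A'$ has an open orbit avoiding... more carefully: $\go{a}=\go{g}_{0_P}+\C H_0$ so $T_P$ has codimension at most $\dim\go{g}_0-\dim\go{a}$), the class $\varphi(P)$ is represented by the gradlog $\frac{dP^*(\partial)(\cdot)}{\cdots}$; one then rescales $P^*$ so that the Proposition \ref{prop-invariant-sl2} normalization gives exactly $(P^*,H_0,P)$. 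Let $\go{b}=\langle P^*,H_0,P\rangle\cong\go{sl}_2$; by construction $\go{b}\subset\go{g}_{-1}\oplus\go{g}_0\oplus\go{g}_1=\Gamma(\go{sp}^p(W,B_0,\lambda))$, and I must check $\go{b}=\widetilde{\go{b}}\cap\Gamma$, i.e. that the elements of $\go{b}$ of degree $\pm 1$ and $0$ are exactly those elements of $\Gamma$ commuting with $\go{a}'$. The inclusion $\go{b}\subset\widetilde{\go{b}}$ follows because $\go{a}'.P=0$ (that is $[\go{a}',P]=0$ by $(5\text{-}2\text{-}1)$), $[\go{a}',P^*]=0$ by the dual statement (the dual prehomogeneous space $(A,W^*)$ satisfies the analogue of $(5\text{-}2\text{-}1)$ by Corollary \ref{cor-sl2-ind-forme} b) type symmetry), and $[\go{a}',H_0]=0$ since $H_0\in Z(\go{g}_0)$. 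For the reverse inclusion: in degree $1$, an element $X\in V$ with $[\go{a}',X]=0$, i.e. $\go{a}'.X=0$, means $X$ is fixed by the semisimple group $A'$, hence by irreducibility of the prehomogeneous action $X\in\C P$; similarly in degree $-1$; in degree $0$, $Z_{\go{g}_0}(\go{a}')=\C H_0\oplus Z(\go{a}')$, and since $\go{a}$ has one-dimensional center with $\go{a}=Z(\go{a})\oplus\go{a}'$ and $Z(\go{a})=\C\,\text{Id}_W=\C H_0$ (as $\go{a}\subset\go{gl}(W)$ with $A$ reductive with $1$-dimensional center acting irreducibly, Schur forces the center to be the scalars), we get $Z_{\go{g}_0}(\go{a}')\cap(\text{what survives})=\C H_0$. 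Assembling these gives $\widetilde{\go{b}}\cap\Gamma=\C P^*\oplus\C H_0\oplus\C P=\go{b}$.

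For part $2)$, the double-centralizer claim, I would argue as follows. Set $\go{c}=Z_{\go{g}}(\widetilde{\go{b}})$ where $\go{g}=\go{sp}^p(W,B_0,\lambda)$; we must show $\go{c}=\go{a}'$. Clearly $\go{a}'\subset\go{c}$ since everything in $\go{b}\subset\widetilde{\go{b}}$... wait, one needs $\go{a}'$ to commute with all of $\widetilde{\go{b}}$, not just $\go{b}$; but $\widetilde{\go{b}}=Z_\go{g}(\go{a}')$ by definition, so $\go{a}'$ commutes with $\widetilde{\go{b}}$ tautologically, giving $\go{a}'\subset Z_\go{g}(\widetilde{\go{b}})=\go{c}$. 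Conversely, since $\go{b}\subset\widetilde{\go{b}}$, we have $\go{c}\subset Z_\go{g}(\go{b})$. Now I use the $\go{sl}_2$-representation theory: $\go{g}$ decomposes under $\ad\go{b}$ into irreducible $\go{sl}_2$-modules, and $Z_\go{g}(\go{b})$ is the sum of the trivial ones, i.e. the $\go{b}$-invariants; in particular $Z_\go{g}(\go{b})\subset\ker(\ad H_0)=\go{g}_0=\go{gl}(W)$ (because $H_0$ acts on $\go{g}_i$ by $2i$, so a $\go{b}$-invariant lies in $\go{g}_0$). Hence $\go{c}\subset Z_{\go{g}_0}(\go{b})=Z_{\go{gl}(W)}(\{P^*,H_0,P\})$. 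An element $U\in\go{gl}(W)$ commuting with $H_0=\frac{2}{\lambda}\text{Id}_W$ automatically (no condition), commuting with $P$ means $[U,P]=0$, i.e. $\rho_\lambda(U).P=0$, i.e. $U.P=0$ when $U\in\go{sl}(W)$ but for general $U=U'+c\,\text{Id}_W$ we get $U'.P+cp P=0$... so $U'.P=-cpP$; and commuting with $P^*$ gives the dual relation. By $(5\text{-}2\text{-}1)$, $U.P\in\C P$ forces $U\in\go{a}$; combined with $[U,P]=0$ meaning $\mu(U)=0$ in the notation of $(5\text{-}2\text{-}1)$, we get $U\in\go{a}'$. (One checks the bracket $[U,P]$ in $\go{g}$ of a degree-$0$ and degree-$1$ element is just $\rho_\lambda(U)P$ by Theorem \ref{th-algebre-locale} a).) Therefore $\go{c}\subset\go{a}'$, and with the reverse inclusion above, $\go{c}=\go{a}'$. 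Combined with $\widetilde{\go{b}}=Z_\go{g}(\go{a}')$, the pair $(\go{a}',\widetilde{\go{b}})$ is a dual pair by Definition \ref{def-paire-duale}.

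The main obstacle I anticipate is part $1)$, specifically the clean identification of $Y$ with (a rescaling of) $P^*(\partial)$ and verifying the precise normalization $H_0=\frac{2}{\lambda}\text{Id}_W$ together with $\go{b}=\widetilde{\go{b}}\cap\Gamma$. The subtlety is that $\varphi(P)$ is only a class modulo $(T_P)^\perp$, so one does not get a canonical $Y\in V^*$ but a coset; showing that this coset meets $\C\cdot P^*(\partial)$ requires knowing that $P^*(\partial)$ is (modulo $(T_P)^\perp$) proportional to the gradlog $dP/P$ at $P$, which is a computation with the differential operator realization in Remark \ref{rem-divers-symplectique} $3)$ and the fact that $P$ is a relative invariant of the dual space. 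One must also be careful that the degree-$0$ part of $\widetilde{\go{b}}\cap\Gamma$ is exactly $\C H_0$ and not larger — this uses precisely hypothesis $(5\text{-}2\text{-}1)$, without which $\go{a}$ could be strictly smaller than the full stabilizer-of-the-line algebra and the argument would break. The representation-theoretic steps in part $2)$ (decomposition under $\ad\go{b}$, invariants sit in $\go{g}_0$) are routine given the grading element $H_0\in\go{b}$.
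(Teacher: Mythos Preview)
Your argument for part $2)$ is essentially the paper's: reduce to $\go{g}_0=\go{gl}(W)$ via the centralizer of $H_0$, then use condition $(5\text{-}2\text{-}1)$ to conclude that $\{U\in\go{gl}(W):[U,P]=0\}=\go{a}'$. (Minor slip: $\rho_\lambda(\text{Id}_W)=\lambda\,\text{Id}_V$, so the coefficient is $c\lambda$, not $cp$; and note that in this paper the action ``$U.P$'' appearing in $(5\text{-}2\text{-}1)$ is already $\rho_\lambda$, so $[U,P]=0$ is literally $U.P=0$ and gives $U\in\go{a}'$ in one step.)

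For part $1)$, however, your route through Proposition~\ref{prop-invariant=nilpositif} and the gradlog is both unnecessary and not correct as written. The gradlog machinery of Propositions~\ref{prop-y=section-cotangent}--\ref{prop-invariant-sl2} concerns a relative invariant on the $G_0=GL(W)$-orbit of $P$ \emph{inside} $V=\C^p[W]$; it has nothing a priori to do with the dual fundamental invariant $P^*\in\C^p[W^*]$ of the prehomogeneous space $(A,W^*)$. So your proposed identification ``$\varphi(P)$ is represented by $P^*(\partial)$'' conflates two different objects, and since $\varphi(P)$ is only a coset modulo $(T_P)^\perp$, the line $\C Y$ is not determined by that argument.

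The paper's approach is the reverse of yours and avoids this issue entirely. It first computes $\widetilde{\go b}\cap\Gamma$ directly (your ``reverse inclusion'' step, done via Schur on $\go{g}_0$ and the fact that the $\go a'$-invariants in $\C^p[W]$ and $\C^p[W^*]$ are $\C P$ and $\C P^*$), obtaining $\widetilde{\go b}\cap\Gamma=\C P^*\oplus\C\,\text{Id}_W\oplus\C P$. This already forces $[P^*,P]\in\C\,\text{Id}_W$. The only remaining point, and the one genuinely missing from your plan, is that $[P^*,P]\neq 0$: the paper gets this in one line from the extended invariant form,
\[
B([P^*,P],\text{Id}_W)=B(P^*,[P,\text{Id}_W])=-\lambda\,B(P^*,P)=-\lambda\,P^*(\partial)P(0)\neq 0,
\]
the last inequality being the standard ``$b$-function at $0$'' fact from prehomogeneous vector space theory (cf.\ Remark~\ref{rem-divers-symplectique}~3)). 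One then rescales $P^*$ to obtain the $\go{sl}_2$-triple. No appeal to Proposition~\ref{prop-invariant=nilpositif} or to Section~4.2 is needed.
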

 
 \begin{proof}\hfill
 
 1) As $\widetilde{\go{b}}=Z_{\go{p}^p(W,B_{0},\lambda)}(\go{a}')$ and as $\go{a}'\subset \go{gl}(W)=\go{g}_{0}$, we see that $\widetilde{\go{b}} $ is a graded subalgebra of $\go{p}^p(W,B_{0},\lambda)$.
 
 Hence 
 $$ \widetilde{\go{b}}\cap \Gamma(\go{p}^p(W,B_{0},\lambda)) =  \widetilde{\go{b}}\cap \go{g}_{-1}\oplus  \widetilde{\go{b}}\cap \go{g}_{0}\oplus  \widetilde{\go{b}}\cap \go{g}_{1}.$$
 But as $W$ is irreducible under $\go{a}$, we obtain that 
$$\widetilde{\go{b}}\cap \go{g}_{1}=\C. P \text{ and }\widetilde{\go{b}}\cap \go{g}_{-1}=\C. P^*.$$
From the Schur Lemma we get also
$$\widetilde{\go{b}}\cap \go{g}_{0}=\{U\in \go{gl}(W), [U,\go{a}']=0\}=\C. \text{Id}_{W}.$$
As  $[P^*,P]\in \widetilde{\go{b}}\cap \go{g}_{0}$, we have $[P^*,P]=\gamma \text{Id}_{W}, \, \gamma\in \C.$ 

Suppose that $[P^*,P]=0$. Let $B$ be the non-degenerate invariant bilinear form which extends $B$ (see section 3.4). Then
$$0=B([P^*,P], \text{Id}_{W})=B(P^*,[P,\text{Id}_{W}])=-\lambda B(P^*,P)=-\lambda P^*(\partial)P(0)$$
But it is well known from the theory of prehomogeneous space that $P^*(\partial)P(0)\neq0$ (see for example the computation on p. 19 in \cite{Rubenthaler-book}). Hence $(\frac{2}{\lambda \gamma}P^*,\frac{2}{\lambda}\text{Id}_{W}, P) $ is an $\go{sl}(2)$-triple and $\go{b}=\widetilde{\go{b}}\cap \Gamma(\go{p}^p(W,B_{0},\lambda)) $.

2) For the second assertion, we have just to prove that $Z_{\go{p}^p(W,B_{0},\lambda)}( \widetilde{\go{b}})\subset \go{a}'$. As $\go{b}=\widetilde{\go{b}}\cap \Gamma(\go{p}^p(W,B_{0},\lambda)) $, we have 
$$Z_{\go{p}^p(W,B_{0},\lambda)}( \widetilde{\go{b}})\subset Z_{\go{p}^p(W,B_{0},\lambda)}({\go{b}})\subset Z_{\go{p}^p(W,B_{0},\lambda)}(\frac{2}{\lambda}\text{Id}_{W})= \go{gl}(W).$$

Therefore $Z_{\go{p}^p(W,B_{0},\lambda)}(\widetilde{\go{b}})\subset Z_{\go{gl}(W)}(P)=\go{a}'$ (condition $(5-2-1)$).

 \end{proof}
 
 \begin{example}\label{ex-paireduale-O(n)-SL(2)} {\bf The dual pair $(\go{o}(n), \go{sl}(2))$}\hfill
 
Let $O(n)$ be the orthogonal group over $\C$ of size $n$ and let $\go{o}(n)$ be its Lie algebra. Define $A=\C^*\times O(n)$ and $W=\C^n$. The natural representation $(A,W)$ is an irreducible regular prehomogeneous vector space whose fundamental relative invariant can be chosen to be  the quadratic form $P(x)= x_{1}^2+\dots+x_{n}^2 $. Condition $(5-2-1)$ is satisfied as $A$ is the structure group of the  Jordan algebra $\C^n$ (cf. Remark \ref{rem-cond-vraie-struct}). Consider the trace form on $\go{gl}(n)$ defined by $B_{0}(U,V)=\tr(UV)$. Then, in the preceding notations, the algebra $\go{p}^2(W, tr, -2)$ is the ordinary symplectic algebra $\go{p}(n,\C)$ with the grading defined as follows (here $S_{n}(\C)$ stands for the $n\times n$ symmetric matrices):
$$\begin{array}{rcl}
V^*\simeq \C^2[(\C^n)^*]\simeq S_{n}(\C)\simeq {\go g}_{-1}&=& \{\begin{pmatrix}0&0\\
 Y&0
\end{pmatrix}, Y\in S_{n}(\C)\}\\

\go{gl}(n)\simeq {\go g}_{0}&=& \{\begin{pmatrix}A&0\\
 0&-^t A
\end{pmatrix}, A\in \go{gl}(n) \}\\
  
V\simeq \C^2[\C^n]\simeq S_{n}(\C \simeq {\go g}_{1}&=& \{\begin{pmatrix}0&X\\
 0&0
\end{pmatrix}, X\in S_{n}(\C)\}\\
 \end{array}$$
 
 Here the quadratic form $P$ can be identified with the matrix $\begin{pmatrix}0&\text{Id}_{n}\\
 0&0
\end{pmatrix}\in \go{g}_{1}$. In this case, keeping the preceding notations, we obtain the dual pair $(\go{a}', \widetilde{\go{b}})$ where $\go{a}'= \{\begin{pmatrix}A&0\\
 0& A
\end{pmatrix}, A\in \go{o}(n) \}\simeq \go{o}(n)$, and where $\widetilde{\go{b}}=\go{b}=\{\begin{pmatrix}a\text{Id}_{n}&b\text{Id}_{n}\\
 c\text{Id}_{n}& -a\text{Id}_{n}
\end{pmatrix}, a,b,c\in \C \}$.  This is the archetype of a dual pair   in $\go{p}(n, \C)$ (see \cite{Howe}, p. 556).

Through our construction this pair appears to be associated to the prehomogeneous vector space $(O(n)\times \C^*,\C^n)$.
 
 \end{example}
 
 \begin{rem} In the notations of the preceding example,  we could  now take  $(\go{g}_{0},\go{g}_{1})$  as the starting prehomogeneous space $(\go{a},W)$, and  do the same construction as in Theorem \ref{th-paire-duale}. But then, as the degree of the fundamental relative invariant (the determinant of the symmetric matrices)  is of degree $n$, we are led to the algebras $\go{p}^n( \go{gl}(\frac{n(n+1)}{2}), B_{0}, \lambda)$ , which are of infinite dimension, according to Proposition \ref{prop-symplectic-finite}.
 
  \end{rem}



 \end{document}